\theoremstyle{plain}
\newtheorem{theorem}{Theorem}[section]
\newtheorem{corollary}[theorem]{Corollary}
\newtheorem{lemma}[theorem]{Lemma}
\newtheorem{proposition}[theorem]{Proposition}
\theoremstyle{remark}
\theoremstyle{definition}
\newtheorem{remark}[theorem]{Remark}
\newtheorem{definition}[theorem]{Definition}
\newtheorem{example}[theorem]{Example}
\newtheorem{hypothesis}[theorem]{Hypothesis}
\def\var{\mathrm{Var}}
\def\RR{\mathbb{R}}
\def\EE{\mathbb{E}}
\def\NN{\mathbb{N}}
\def\WW{\mathbb{W}}
\def\1{{\bf 1}}
\def\cA{{\mathcal A}}
\def\cF{{\mathcal F}}
\def\cH{{\mathcal H}}
\def\de{{\delta}}
\def\la{{\lambda}}
\def\si{{\sigma}}
\def\Om{{\Omega}}
\def\al{{\alpha}}
\def\Ga{{\Gamma}}
\def\ga{{\gamma}}
\def\de{{\delta}}
\def\si{{\sigma}}
\def\la{{\lambda}}
\def\vare{{\varepsilon}}
\def \eref#1{\hbox{(\ref{#1})}}
\def\si{{\sigma}}
\def\al{{\alpha}}
\def\eref#1{\hbox{(\ref{#1})}}
\newcommand{\lc}{\left[}
\newcommand{\rc}{\right]}
\newcommand{\R}{{\mathbb R}}
\newcommand{\HH}{\mathcal H}
\newcommand{\DD}{\mathbb D}
\def\lt{\left}
\def\rt{\right}
\def\its{\ell }
\let\Section=\section
\def\section{\setcounter{equation}{0}\Section}
\title[parabolic Anderson random fields]{Asymptotics of the density of   parabolic Anderson random fields}
\author[Y. Hu]{Yaozhong Hu}
\thanks{Y. Hu is supported by a startup fund from   University of Alberta at Edmonton.}
\address{Department of Mathematical and Statistical Sciences \\
University of Alberta at Edmonton \\
Edmonton, Alberta, T6G 2G1}
\email{yaozhong@ualberta.ca}
\author[K. L\^e]{Khoa  L\^e}
\thanks{K. L\^e is supported by Martin Hairer's Leverhulme Trust leadership award.}
\address{Department of Mathematics \\
Imperial College \\
London, UK, SW7 2AZ}
\email{n.le@imperial.ac.uk}
\subjclass[2000]{Primary 60H15; Secondary 60H07,  60G15, 60G17, 60H05, 60H07, 60H30,  35K60 }
 \keywords{parabolic Anderson model; multiplicative noise;  Malliavin calculus; right tail and left tail estimates;  density of the law of the solution. }
\begin{document}
\begin{abstract}
We investigate the sharp density $\rho(t,x; y)$   of the    solution $u(t,x)$ to  stochastic partial differential equation 
$\frac{\partial }{\partial t} u(t,x)=\frac12 \Delta u(t,x)+u\diamond  \dot W(t,x)$, where $\dot W$ is a general Gaussian noise and $\diamond$ denotes the Wick product.  We    
  mainly  concern with  the asymptotic behavior of    $\rho(t,x; y)$ when $y\rightarrow \infty$ or when $t\to0+$.  Both   upper   and
lower bounds  are obtained and these two bounds  match each other modulo some multiplicative constants.   If  the initial datum is positive, then $\rho(t,x;y)$ is supported on the positive half line $y\in [0, \infty)$. In this case 
we show that  $\rho(t,x; 0+)=0$   and  obtain an upper bound for $\rho(t,x; y)$ when $y\rightarrow 0+$. 
\end{abstract}
\maketitle 

\section{Introduction}
Let   $(\Om, \cF, P)$  be  a probability space, on which  the expectation is denoted by $\EE$ , and $W=\{ W(t,x)\,, t\ge 0, 
x\in \RR^d \}$ be  a Gaussian random field on $(\Om, \cF, P)$. 
Its  formal derivative
 $\dot{W}=\left\{  \dot W(t,x)=\frac{\partial ^{d+1}}{\partial t\partial x_1\cdots
 \partial x_d}W(t,x) 
\right\}$   is  a  Gaussian noise  field with the following
covariance  structure
\begin{equation}
\EE \left[ \dot W(s,x)\dot W(t,y)\right] =Q(t,s,x,y)=\gamma_0(t-s)\gamma(x-y)\,.
\label{e.1.2}
\end{equation}
The  temporal covariance distribution $\gamma_0$  
is either the Dirac mass at 0 or a locally integrable  function. The spatial covariance distribution $\gamma$ is either the Dirac mass at 0 in $\RR$ or a function whoes has spectral density  $\mu$ on $\RR^d$ satisfying
\[
\gamma(x)=\int_{\RR^d} e^{\imath x\cdot\xi} \mu(\xi)d \xi\,,\quad {\rm with}\quad 
\int_{\RR^d} \frac{\mu(\xi)d \xi}{1+|\xi|^2}<\infty\,.
\]
In particular, the popular case of space-time white noise in $\RR_+\times\RR$ is included.

The main subject we exclusively consider in this paper is the following stochastic heat equation  
\begin{equation}
\begin{cases}\frac{\partial }{\partial t} u_{(\its)}(t,x)=\frac12 \Delta u_{(\its)}(t,x)+ \its u_{(\its)}(t,x) \diamond  \dot W(t,x)\,,&\qquad t\ge 0,x\in\RR^d,\\
u(0,x)=u_0(x)\,,
\end{cases}
\label{e.1.1}
\end{equation}
where $\its$ is a positive constant, $\Delta=\sum_{i=1}^d \frac{\partial^2}{\partial x_i^2}$ is the Laplacian  and 
$\diamond$ denotes the Wick product (see e.g. \cite{HY}). The parameter $\its$ represents  the intensity of the noise and in the case when $\its=1$, we omit it in the notation $u_{(\its)}$, writing $u$ instead.
It is proved recently in \cite{HHNT}  that under the previously described  conditions on the covariance structure of the noise, a random field solution exists uniquely in $L^2(\Omega)$. 
  
For any fixed $t>0$ and $x\in \RR^d$,  $u(t,x)$ is a real valued  random variable.    
 It is natural to  ask   the following questions: 
 \begin{enumerate}[(I)]
 	\item  Is there   a (probability)  density  function  $y\mapsto\rho(t,x; y)$ such that
	\begin{equation}
	P(u(t,x)\in A)=\int_A \rho(t,x; y)dy\,, \quad \forall \ \hbox{Borel set }\ \ A \subseteq \RR\,?
	\end{equation}
	\item If $\rho(t,x;y)$ exists, what is its general shape?
 \end{enumerate}

The present paper give an affirmative answer for (I) under some scaling assumptions on the covariance of the noise. Concerning question (II), we find explicit functions $g_i(t,x;y)$, $i=1,2$ such that $g_1(t,x;y)\le \rho(t,x;y)\le g_2(t,x;y)$ (Theorem \ref{t.7.2}). In addition, modulo some multiplicative constants, the lowest order asymptotic behaviors of $g_1(t,x;y) $ and $g_2(t,x;y) $ match over two asymptotic regimes: a) $(t,x)$ is fixed, $y\to\infty$ and b) $(x,y)$ is fixed such that $y$ is bounded away from 0, $t\to0+$ (see Remark \ref{rem:asymp} below). When the initial condition is nonnegative, the solution $u(t,x)$ is also nonnegative, and hence $\rho(t,x;y)$ is supported on $[0,\infty)$. In this case, another interesting regime appear: $(t,x)$ is fixed and $y\to0+$. We obtain an upper bound for $\rho(t,x;y)$, which differs from previous bounds (Theorem \ref{thm:lefttail}).

Previously, the existence and   smoothness of $\rho(t,x; y)$ have been studied for some specific   noises. 
In \cite{MN},  the smoothness of density is obtained when  the noise is 
space-time white and the spatial dimension is one.  In \cite{HNS}, the smoothness of the density is obtained when
the noise  is  fractional with Hurst parameters $H_0, H_1, 
\cdots, H_d$ satisfying
\begin{equation} 
H_i>1/2\,, \ i=0, 1, \cdots, d\,,\quad {\rm and }\quad 
 2H_0+\sum_{i=1}^d H_i>d+1\,. \label{e.hns}   
\end{equation}
Our assumption on the noise structure is more general. 
We allow the noise structure to be   the one   in \cite{HHNT} which includes the cases considered in \cites{MN,HNS}.

Relevant to question (II), let us mention that there have been already many  works  on the upper and lower bounds
of the density for the solution of stochastic differential equations and stochastic partial differential equations (see e.g. \cites{bally,kohatsu,leru} and references therein).  
However, the bounds for the density 
obtained   in the afore-mentioned  papers are of Gaussian shapes.
It seems that the  approaches  used in all  the  above mentioned papers are not applicable to  our present situation.
In particular, our bounds for the density are not of Gaussian type: they have rather heavy  tail and this is certainly not surprising due to the intermittency property of the solution.  

Let us briefly describe our approach.
To show the existence and smoothness of density by using Malliavin calculus, a key ingredient is to show that the Malliavin covariance matrix has negative moments of all orders. The techniques in previous work (\cites{MN,HNS}) require that the noise in \eqref{e.1.1} is white in time or there is a Feynman-Kac formula for the solution, which are not the case in our consideration. The current paper follows a different route. We show that the Malliavin covariance matrix has finite negative moments by deriving estimates for the small ball probability $P(\|Du(t,x)\|_{\cH}\le a)$ as $a\to0+$ (Theorem \ref{thm:Duneg}). Previously, the (left) tail probability $P(u(t,x)\le a)$ as $a\to0+$ has been studied in \cite{flores} when the noise is space-time white in one dimension using the standard discrete approximation of the Laplacian together with concentration inequalities. In the current paper, we propose another approximation scheme of the Gaussian noise which works in our general setting. Consequently, we obtain estimates on $P(u(t,x)\le a)$ (Theorem \ref{thm:usmallball}). Moreover, our approximation scheme is flexible enough to derive estimates on $P(\|Du(t,x)\|_{\cH}\le a)$ (Theorem \ref{thm:Zsmall}). 
Having established various estimates of positive and negative moments of $u(t,x)$ and its Malliavin derivatives, it is rather straightforward to obtain corresponding bounds on  $\rho(t,x; y)$ (see Section \ref{sec:tails}).

Equation \eqref{e.1.1} has   also been studied when the Wick product $\diamond $ is replaced 
by the usual product (Stratonovich form) under some more restricted 
condition of the noise covariance structure in \cite{HHNT}.  However, to simplify    the presentation, we only
consider  the Wick product case. The Stratonovich case can be treated analogously as long as a square integrable solution exists.

If the Gaussian noise is reduced to the fractional Brownian noise, then 
the assumption that $\gamma_0$ and $\ga$ are positive (generalized) 
functions means that  the Hurst 
parameters must be greater than $1/2$ (long memory case). When the Hurst 
parameter is less than $1/2$ there are some recent work 
on the   equation \eqref{e.1.1} (see e.g. \cite{HHLNT}).  We will not deal with this situation 
in present work since it  is more  involved.  In particular, some estimates herein can not apply directly and modifications are necessary.

Here is the organization of the paper.  In Section \ref{sec:pre}, we  
 briefly  recall some results  from \cite{HHNT}   that we are going to use and also to fix some notations. We refer the readers to that paper for
other   concepts such as stochastic integral, existence and uniqueness 
of  solution etc.   In Section \ref{sec:right}, we state some right tail results which can be obtained by the high moment bounds. For the lower bound, we use the Paley-Zygmund inequality. 
  In Section \ref{sec:left_tail} we extend the  recent result in \cite{flores}
on the left tail asymptotics  in  one dimensional space time white noise  case to
general Gaussian noise in any dimension.        Section \ref{sec:non_degeneracy_of_malliavin_derivatives}  is devoted to the bounds of the negative moments of the Malliavin covariance matrix. 
Section \ref{sec:tails} presents our main results of the paper on the asymptotic behavior of  
the density $\rho(t,x; y)$ as $y\rightarrow \infty$  and as $y\rightarrow 0$.

\section{Preliminary}\label{sec:pre}
%
%

We shall  follow  the assumptions made in \cite{HHNT},   which  we now recall.
The functions $\gamma_0$ and $\gamma$ in \eqref{e.1.2} are general nonnegative and nonnegative
definite  functions or  Dirac delta masses. Since we are interested in qualitative estimates, certain scaling properties of the covariances are assumed below. We suspect that some of our results still holds under the general conditions described in the Introduction, however, we do not pursue this direction.

To be more precise, throughout the paper, the following assumptions are enforced.
\begin{hypothesis} \label{h2}
$\gamma_0=\delta$ is the Dirac delta distribution or $\gamma_0$ is a function and there exist constants $c_0,C_0$ and $0\le \alpha_0<1$ (independent of $t$), such that
\[
c_0  t  ^{-\alpha_0} \le \gamma_0(t) \le C_0  t  ^{-\alpha_0}\,,\quad \forall \ t>0\,. 
\]
\end{hypothesis}

\begin{hypothesis}\label{hspa} $\gamma=\delta$ is the Dirac delta distribution on $\RR$ (this forces $d=1$) or $\gamma$ is a function and there exist constant $\alpha\in(0,2)$ and $c_0>0$ such that $\sup_{c,x:c>0,\gamma(x)\neq0}\frac{\gamma(cx)}{c^{- \alpha} \gamma(x)}<\infty$ and $\inf_{x:|x|\le \varepsilon}\gamma(x)\ge c_0 \varepsilon^{-\alpha}$ for every $ \varepsilon>0$.
\end{hypothesis}
\begin{remark}
	The cases $\gamma_0=\delta$ and $\gamma=\delta$ correspond respectively to white in time noises and white in space noises. If $\gamma_0=\delta$, we set $\alpha_0=1$ and if $\gamma=\delta$, we set $\alpha=1$. The case $\alpha_0=0$ includes noises which are independent of time.
\end{remark}
\begin{example}  If  $\dot W$ is   fractional Gaussian field with Hurst
parameter $H_0$ in time and $H=(H_1, \cdots, H_d)$ in space, then
\[
\gamma_0(t)= R_{H_0}(t)\quad {\rm and} \quad
\gamma(x)=\prod_{i=1}^d  R_{H_i}(x_i)\,,
\]
where $x=(x_1, \cdots, x_d)$ and $R_H(t)=H (2H -1) |t|^{2H -2}$.
 In this case, Hypothesis \ref{h2} is verified with $\alpha_0=2-2H_0$, where $H_0\in (1/2, 1)$.
 Hypothesis \ref{hspa} is verified with $\alpha=2d-2\sum_{i=1}^dH_i $ when
\[
H_i\in (1/2, 1)\,,\  i= 1, \cdots, d\,,\quad {\rm and}\quad  \sum_{i=1}^d H_{i}>d-1\,.
\]
Since $H_0\in(\frac12,1)$, the above condition is implied  by \eqref{e.hns}.  Thus the assumptions made in this paper cover the cases considered in \cite{HNS} and \cite{MN}. 
\end{example}

For the Gaussian noise $\dot W$ satisfying Hypotheses \ref{h2} and  \ref{hspa}, the Cameron-Martin space $\cH$ is defined as the completion of $C^\infty_c(\RR_+\times\RR^d)$ under the scalar product
\begin{equation*}
	\langle \phi,\psi\rangle_{\cH}=\iint_{(\RR_+\times\RR^d)^2} \phi(s,x)\psi(t,y)\gamma_0(t-s)\gamma(x-y)dsdxdtdy \quad\forall \phi,\psi\in C^\infty_c(\RR_+\times\RR^d) \,.
\end{equation*}
The stochastic integral $\int_0^t \int_{\RR^d} f(s, y) W(ds, dy)$ is well-defined for $f$ belongs to $\cH$ and for some random kernel $f$ specified in \cite{HHNT}. We shall freely use this and some other results obtained there. 

In what follows, we denote by $p_t(x)$ the heat kernel $(2\pi t)^{-d/2} e^{-|x|^2/(2t)}$, by
$p_t*f(x)$ the spatial convolution $\int_{\RR^d} p_t(x-y) f(y)dy$, and by $C_b (\mathbb{R}^d)$ the set of all bounded continuous functions from $\RR^d$ to $\RR$.  
\begin{definition}\label{def1}
An adapted   random field $u_{(\its)}=\{u_{(\its)}(t,x) ; t \geq 0, x \in
\mathbb{R}^d\}$ such that $\EE [ u^2_{(\its)}(t,x)] < \infty$ for all $(t,x)$ is  called 
a mild solution to equation \eref{e.1.1} with initial condition $u_0 \in C_b (\mathbb{R}^d)$, if for any $(t,x) \in [0,
\infty)\times \mathbb{R}^d$, the process $\{p_{t-s}(x-y)u(s,y) {\bf
1}_{[0,t]}(s); s \geq 0, y \in \mathbb{R}^d\}$ is Skorohod
integrable,  and the following equation holds
\begin{equation}\label{eq:sko-mild}
u_{(\its)}(t,x) =p_t *u_0(x)+\its\int_0^t\int_{\mathbb{R}^d}p_{t-s}(x-y)u_{(\its)}(s,y)  \, 
W(ds, dy)\,. 
\end{equation}
\end{definition}

%
%

\begin{theorem}\label{t.hhnt}
  Assume that initial condition $u_0$ is a measurable function such that $p_t*|u_0|(x)<\infty$ for all $t>0$ and $x\in\RR^d$.
  Suppose that $\gamma_0,\gamma$ satisfy Hypotheses \ref{h2} and \ref{hspa}.
  Then  we have the following statements.
\begin{enumerate}
 \item[(i)]\ The equation \eref{e.1.1}  has a unique   mild solution  which admits the following chaos expansion:
  \begin{equation}\label{eqn:chaos}
  u_{(\its)}(t,x)=\sum_{n=0}^\infty \its^n I_n(f_n(t,x))\,,
  \end{equation}
  where
  \begin{align}\label{def:fn}
  f_n(t,x; s_1,x_1,\dots,s_n,x_n )
   =\frac{1}{n!}p_{t-s_{\si(n)}}(x-x_{\si(n)})\cdots p_{s_{\si(2)}-s_{\si(1)}}(x_{\si(2)}-x_{\si(1)})
  p_{s_{\si(1)}}*u_0(x_{\si(1)})\,,
  \end{align}
  and $I_n(f_n(t,x))$ is the    multiple  Wiener-It\^o  integral with respect to the kernel $f_n(t,x, \cdot)$.
  Here $\si$ denotes the   permutation 
  of $\{1,2,\dots,n\}$ such that $0<s_{\si(1)}<\cdots<s_{\si(n)}<t$. In addition, for every $n\ge1$ and $(t,x)\in\RR_+\times\RR^d$, we have
  \begin{equation}
    \EE\lt[ |I_n(f_n(t,x))|^2\rt]\le C  |p_t*|u_0|(x)|^2  t^{(2- \alpha_0-\frac \alpha2)n}(n!)^{\frac \alpha2-1}\,.
    \label{e.2.4}
  \end{equation}
  
 \item[(ii)]\ The solution $u_{(\its)}(t,x)$ has all moments and there are 
  positive constants  $C_1$ and $C_2$ such that
  \begin{equation}   \label{est:pmup}
  \EE \lc \left|u_{(\its)}(t,x)\right| ^p\rc\leq  [C_1 p_t*|u_0|(x) ]^p  \exp\left(C_2 \its^{\frac4{2- \alpha}}
  t^{\frac{4-2\alpha_0-\alpha}{2-\alpha}}p^{\frac{4-\alpha}{2-\alpha}}\right)
  \end{equation}
  for all $t\ge 0\,, x\in \R^d\,, p\ge 1$ and $\its>0$.
  
\item[(iii)]\ If the initial condition $u_0(x)$ is bounded from below by a positive constant, namely, there
  is a constant $L>0$ such that $u_0(x)\ge L$,  then the solution $u_{(\its)}(t,x)$ is also positive almost surely and  there are positive constants $\tilde C_1$ and $\tilde C_2$,
  independent of $t$, $x$,  and $p$,    such that  
  \begin{equation}   \label{est:pmlow}
  \EE \lc u_{(\its)}(t,x) ^p\rc\geq  \tilde C_1^p\exp\left(\tilde C_2 \its^{\frac4{2- \alpha}}
  t^{\frac{4-2\alpha_0-\alpha}{2-\alpha}}p^{\frac{4-\alpha}{2-\alpha}}\right)
  \end{equation}
  for all $t\ge 0\,, x\in \R^d\,, p>1$.
\end{enumerate} 
\end{theorem}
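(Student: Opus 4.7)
The plan is to prove the three parts in sequence: parts (i)--(ii) follow the chaos-expansion/hypercontractivity route, while part (iii) requires a Feynman--Kac-type moment identity together with a variational argument.

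For part (i), I would substitute the ansatz $u_{(\its)}(t,x)=\sum_{n\ge 0}\its^n I_n(f_n(t,x))$ into the mild equation \eqref{eq:sko-mild} and match Wiener chaoses. The identity $\int_0^t\!\int_{\RR^d}p_{t-s}(x-y)I_n(f_n(s,y;\cdot))\,W(ds,dy)=I_{n+1}(\widetilde g_{n+1})$, with $g_{n+1}(t,x;s_1,x_1,\dots,s_{n+1},x_{n+1})=p_{t-s_{n+1}}(x-x_{n+1})f_n(s_{n+1},x_{n+1};s_1,x_1,\dots,s_n,x_n)\mathbf 1_{s_{n+1}<t}$, unfolds by induction to the explicit kernel \eqref{def:fn}, with the factor $1/n!$ absorbing the symmetrization over the permutation $\sigma$. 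Since $f_n$ is symmetric, $\EE|I_n(f_n)|^2=n!\|f_n\|_{\cH^{\otimes n}}^2$. Reducing this inner product to the time-ordered simplex $\{0<s_1<\dots<s_n<t\}$, applying Plancherel with spectral density $\mu$ (Hypothesis \ref{hspa}) to integrate out the spatial variables, and using Hypothesis \ref{h2} for $\gamma_0$, one arrives at a Dirichlet-type integral with combined singular exponent $\alpha_0+\alpha/2$ along each time factor. The Selberg--Dirichlet evaluation produces $t^{n(2-\alpha_0-\alpha/2)}/\Gamma(1+n(2-\alpha_0-\alpha/2))$, and Stirling converts the Gamma factor into $C^n(n!)^{\alpha/2-1}$ after accounting for the extra $n!$ from the norm identity, which yields \eqref{e.2.4}. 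Summability of the chaos series in $L^2$ then provides existence, and applying the same bound to the difference of two candidate mild solutions gives uniqueness.

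For part (ii), I would invoke Nelson's hypercontractivity on each Wiener chaos, $\|I_n(f_n)\|_{L^p}\le(p-1)^{n/2}\|I_n(f_n)\|_{L^2}$, combine with \eqref{e.2.4}, and sum:
\begin{equation*}
\|u_{(\its)}(t,x)\|_{L^p}\le C\,p_t*|u_0|(x)\sum_{n=0}^\infty\bigl(\its(p-1)^{1/2}t^{(2-\alpha_0-\alpha/2)/2}\bigr)^n(n!)^{-(2-\alpha)/4}.
\end{equation*}
The elementary asymptotic $\sum_n c^n/(n!)^\lambda\lesssim\exp(\lambda c^{1/\lambda})$ with $\lambda=(2-\alpha)/4$ produces an exponential in $\its^{4/(2-\alpha)}(p-1)^{2/(2-\alpha)}t^{(4-2\alpha_0-\alpha)/(2-\alpha)}$, and raising to the $p$th power converts the $p(p-1)^{2/(2-\alpha)}$ prefactor in the exponent into $p^{(4-\alpha)/(2-\alpha)}$, giving \eqref{est:pmup}.

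The main obstacle is part (iii), where the lower bound cannot be obtained by term-by-term chaos estimates. The starting point is the Feynman--Kac moment identity for the Wick solution,
\begin{equation*}
\EE[u_{(\its)}(t,x)^p]=\EE_B\Biggl[\prod_{j=1}^p u_0(x+B^j_t)\exp\Bigl(\its^2\!\sum_{1\le i<j\le p}\iint_{[0,t]^2}\!\gamma_0(s-s')\gamma(B^i_s-B^j_{s'})\,ds\,ds'\Bigr)\Biggr],
\end{equation*}
with $B^1,\dots,B^p$ independent standard Brownian motions in $\RR^d$, derivable from the chaos expansion by a pairing/diagram argument. The hypothesis $u_0\ge L$ removes the initial-condition factors. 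A naive Jensen step yields only the suboptimal exponent $\its^2 p(p-1)t^{2-\alpha_0-\alpha/2}$; the correct order $\its^{4/(2-\alpha)}p^{(4-\alpha)/(2-\alpha)}t^{(4-2\alpha_0-\alpha)/(2-\alpha)}$ is extracted by a variational/Donsker--Varadhan argument that exploits the pointwise lower bound $\inf_{|x|\le\varepsilon}\gamma(x)\ge c_0\varepsilon^{-\alpha}$ from Hypothesis \ref{hspa} on paths restricted to a ball of optimally chosen radius, balanced against the Gaussian small-ball probability for the $B^j$. Carrying out this optimization under the abstract scaling hypotheses (rather than the explicit fractional/Riesz covariances in \cite{HNS} and \cite{MN}) is the main technical point; matching the exponent of \eqref{est:pmup} serves as the essential self-consistency check.
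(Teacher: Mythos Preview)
Your outline for parts (ii) and (iii) matches the paper exactly: hypercontractivity plus the Mittag--Leffler asymptotic for the upper moment bound, and the Feynman--Kac moment formula together with a small-ball/variational optimization for the lower bound (the paper simply cites \cite{HHNT}*{Theorem 6.4} for the latter, but what you describe is the content of that reference).

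For part (i) the broad strategy agrees, but the route to the key estimate \eqref{e.2.4} is different. You propose a direct Plancherel computation of $n!\|f_n\|_{\cH^{\otimes n}}^2$ leading to a Dirichlet integral with singular exponent $\alpha_0+\alpha/2$. The paper instead imports a Brownian-bridge bound from \cite{HLN2},
\[
n!\|f_n(t,x)\|_{\cH^{\otimes n}}^2\le \Bigl(\textstyle\int_0^t\gamma_0\Bigr)^n|p_t*u_0(x)|^2\int_{[0,t]^n_<}\int_{\RR^{nd}}\exp\Bigl\{-\mathrm{Var}\bigl(\textstyle\sum_j\xi_j\cdot B^j_{0,t}(s_j)\bigr)\Bigr\}\mu(\xi)\,d\xi\,ds,
\]
which has already absorbed the temporal covariance $\gamma_0$ into the prefactor $(\int_0^t\gamma_0)^n\lesssim t^{n(1-\alpha_0)}$; it then rescales the bridges to pull out $t^{-n\alpha/2}$ and bounds the remaining dimensionless integral by $\bar C^n(n!)^{\alpha/2-1}$ via a Girsanov reduction to Brownian motion and a change of variables in the $\xi$-integral. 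The practical difference is this: when $\gamma_0\neq\delta$, the $\cH^{\otimes n}$-norm is a \emph{double} time integral over $[0,t]^{2n}$ coupled through $\prod_j\gamma_0(s_j-r_j)$, and your phrase ``reducing this inner product to the time-ordered simplex'' hides exactly the step that collapses this to a single simplex integral. That reduction is not a triviality---it is precisely what the Brownian-bridge inequality from \cite{HLN2} encodes---so if you pursue the direct route you should either reprove that inequality or supply an alternative maximal-type argument to decouple the $s$- and $r$-integrations. Once past that point, your Dirichlet/Selberg evaluation and the paper's bridge-scaling computation are equivalent.
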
 
\begin{remark} By an approximate procedure, Theorem \ref{t.hhnt}(i) and (ii) hold for initial conditions which are Dirac masses.
\end{remark}
\begin{proof}
	(i) Existence and uniqueness of a random field solution have been obtained in \cite{HLN2}. The chaos expansion \eqref{eqn:chaos} is also obtained there. The estimate \eqref{e.2.4} is implicit in \cites{HLN,HLN2} and \cite{HHNT}. We give a brief argument below and make references to the afore-mentioned papers whenever necessary.

 	We begin with an estimate extracted from \cite{HLN2}*{inequality (3.9)}
	\begin{align}\label{tmp:hln}
		n!\|f_n(t,x)\|^2_{\cH^{\otimes n}}\le C^n|p_t*u_0(x)|^2\int_{[0,t]^n_<}\int_{\RR^{nd}}\exp\lt\{-\var \lt(\sum_{j=1}^n \xi_j\cdot B_{0,t}^j(s_j) \rt) \rt\}\mu(\xi)d \xi d s
	\end{align}
	where $C$ is some positive constant, $[0,t]^n_<=\{(s_1,\dots,s_n)\in[0,t]^n:0<s_1<\cdots<s_n<t \} $, $B^j_{0,t}=\{B^j_{0,t}(s),s\in[0,t]\} $, $j=1,\dots,n$ are independent Brownian bridges which start and end at 0, $\mu(\xi) d \xi=\prod_{j=1}^n \mu(\xi_j) d \xi_j$ and $ds=\prod_{j=1}^n ds_j $. The constant $C$ is not explicitly stated in \cite{HLN2}, however, it can be easily computed from \cite{HLN2}*{page 623}, 
	\begin{equation*}
		C=\int_0^t \gamma_0(s)ds\,.
	\end{equation*}
	From Hypothesis \ref{h2}, we see that $C\le C_0(1- \alpha_0)^{-1} t^{1- \alpha_0}$.
	Using scaling of Brownian bridges and Hypothesis \ref{hspa}, we obtain from \eqref{tmp:hln} that
	\begin{equation*}
		n!\|f_n(t,x)\|^2_{\cH^{\otimes n}}\le \tilde C^n t^{(2- \alpha_0-\frac \alpha2)n} |p_t*u_0(x)|^2\int_{[0,1]^n_<}\int_{\RR^{nd}}\exp\lt\{-\var \lt(\sum_{j=1}^n \xi_j\cdot B_{0,1}^j(s_j) \rt) \rt\}\mu(\xi)d \xi d s
	\end{equation*}
	for some constant $\tilde C$. Since $\EE[I_n(f_n(t,x))]=n!\|f_n(t,x)\|^2_{\cH^{\otimes n}}$, in order to obtain \eqref{e.2.4}, it remains to show that
	\begin{equation}
		K_n:=\int_{[0,1]^n_<}\int_{\RR^{nd}}\exp\lt\{-\var \lt(\sum_{j=1}^n \xi_j\cdot B_{0,1}^j(s_j) \rt) \rt\}\mu(\xi)d \xi d s
		\le \bar C^n (n!)^{\frac \alpha2-1}
	\end{equation}
	for some constant $\bar C$. In what follows, the symbol $\bar C$ denotes some positive constant which can vary from line to line. From \cite{HLN}*{eq. (9.85)} we have
	\begin{equation*}
		K_n=\frac1{n!}\EE\lt[\int_0^1 \gamma(\sqrt2 B_{0,1}(s))ds \rt]^n\,.
	\end{equation*}
	By the elementary inequality $(a+b)^n\le 2^{n-1}( a^n+b^n)$ and the fact that $B_{0,1}\stackrel{\mathrm{law}}{=}B_{0,1}(1-\cdot)$, we have
	\begin{equation*}
		K_n\le \frac{2^{n-1}}{n!} \EE\lt[\int_0^{\frac12} \gamma(\sqrt2 B_{0,1}(s))ds \rt]^n\,.
	\end{equation*}
	Using \cite{HLN}*{eq. (2.38)} (or Girsanov theorem), 
	\begin{equation*}
		\EE\lt[\int_0^{\frac12} \gamma(\sqrt2 B_{0,1}(s))ds \rt]^n\le 2^{\frac d2}\EE\lt[\int_0^{\frac12} \gamma(\sqrt2 B(s))ds \rt]^n\,,
	\end{equation*}
	where $B$ is a Brownian motion. We now write
	\begin{align*}
		\frac1{n!}\EE\lt[\int_0^{\frac12} \gamma(\sqrt2 B(s))ds \rt]^n=\int_{[0,\frac12]^n_<}\int_{\RR^{nd}}\prod_{j=1}^n e^{-(s_{j+1}-s_j)|\xi_j+\cdots+\xi_1|^2 }\mu(\xi) d \xi ds
	\end{align*}
	with the convention $s_{n+1}=\frac12$ and use Hypothesis \ref{hspa} to see that
	\begin{equation*}
		K_n\le \bar C^n \int_{[0,\frac12]^n_<}\prod_{j=1}^n(s_{j+1}-s_j)^{-\frac \alpha2}ds\int_{\RR^{Nd}}\prod_{j=1}^n e^{-|\xi_j+\cdots+\xi_1|^2}\mu(\xi)d \xi\,.
	\end{equation*}
	The estimate \eqref{tmp:hln} follows from the above estimate after observing the following inequalities, 
	\begin{align*}
		\int_{[0,\frac12]^n_<}\prod_{j=1}^n(s_{j+1}-s_j)^{-\frac \alpha2}ds\le \frac{\bar C^n}{\Gamma(n(1-\frac \alpha2))}
	\end{align*}
	and
	\begin{align*}
		\int_{\RR^{Nd}}\prod_{j=1}^n e^{-|\xi_j+\cdots+\xi_1|^2}\mu(\xi)d \xi=\lt(\int_{\RR^d}e^{-|\xi_1|^2}\mu(\xi_1)d \xi_1\rt)^n\,,
	\end{align*}
	where $\Gamma$ is the Gamma function.

	Part (ii) is a consequence of part (i).   In fact, from 
	the  hypercontractivity inequality  (\cite{hubook}) and then from
	\eqref{e.2.4}, we have
	\[
	\|I_n(f_n(t,x))\|_p\le (p-1)^{n/2} \|I_n(f_n)\|_2
	\le  C  p_t*|u_0|(x)  p^{\frac n2}  t^{\frac{2- \alpha_0-\frac \alpha2}{2} n}(n!)^{\frac \alpha4-\frac12}\,. 
	\]
	Thus by the asymptotic property of the Mittag-Leffler function, we have 
	\begin{align*}
	\| u_{(\its)}(t,x)\|_p 
	&\le 
	\sum_{n=0}^\infty  \its ^n \|I_n(f_n(t,x) )\|_p\\
	&\le  C  p_t*|u_0|(x) \sum_{n=0}^\infty   \its^n p^{\frac n2}  t^{\frac{2- \alpha_0-\frac \alpha2}{2} n}(n!)^{\frac \alpha4-\frac12}\\
	&\le  C  p_t*|u_0|(x)  \exp\left(C \its^{\frac4{2- \alpha}}
	  t^{\frac{4-2\alpha_0-\alpha}{2-\alpha}}p^{\frac{2 }{2-\alpha}}\right)
	  \end{align*}
	which is equivalent to  \eqref{est:pmup}.   

	(iii) Nonnegativity of $u(t,x)$ has been observed in \cite{HLN2}*{Remark 4.5}. The estimate \eqref{est:pmlow} can be obtained as in \cite{HHNT}*{Theorem 6.4 and Remark 6.6}.
\end{proof}
    
\textbf{Convention:} throughout the paper, we denote $\beta=\frac{4-2 \alpha_0- \alpha}{2- \alpha}$.

\begin{corollary}\label{cor.Dku-bound} Let the assumptions in Theorem \ref{t.hhnt}  be satisfied. 
  For every $k\in\NN$, the solution of \eqref{e.1.1} is $k$-times Malliavin differentiable. In addition, the $k$-th Malliavin derivative of $u_{(\its)}$, denoted by $D^k u_{(\its)}$ has finite moment of all orders and satisfies
  \begin{equation}
    \EE [\|D^k u_{(\its)}(t,x)\|_{\cH^{\otimes k}}^p]\le C_{k,p} |p_t*|u_0|(x)|^p t^{\frac{4- 2\alpha_0-\alpha}4pk} \exp\lt\{c\its^{\frac4{2- \alpha}}p^{\frac{4- \alpha}{2- \alpha}}t^\beta \rt\} 
    \quad\textrm{for all}\quad (t,x)\in\RR_+\times\RR^d
  \end{equation}
  for some  positive constants $c,C_{k,p}$ independent of $\its$.
\end{corollary}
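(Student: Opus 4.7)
The plan is to imitate the derivation of \eqref{est:pmup} in Theorem \ref{t.hhnt}(ii), but now applied to the chaos expansion of $D^k u_{(\its)}(t,x)$ instead of $u_{(\its)}(t,x)$. Provided the resulting series converges in $L^2(\Omega;\cH^{\otimes k})$, term-by-term Malliavin differentiation of \eqref{eqn:chaos} yields
\[
D^k u_{(\its)}(t,x) = \sum_{n\ge k}\its^n\frac{n!}{(n-k)!}\, I_{n-k}\bigl(f_n(t,x;\cdot,\star)\bigr),
\]
viewed as an $\cH^{\otimes k}$-valued random variable, where $\star$ denotes the $k$ free variables. Applying It\^o's isometry to the free variables together with the bound $\|f_n(t,x)\|^2_{\cH^{\otimes n}}\le C^n|p_t*|u_0|(x)|^2 t^{(2-\alpha_0-\alpha/2)n}(n!)^{\alpha/2-2}$ extracted from \eqref{e.2.4} and the identity $\EE[|I_n(f_n)|^2]=n!\|f_n\|^2_{\cH^{\otimes n}}$, I obtain
\[
\EE\bigl[\|D^k I_n(f_n(t,x))\|^2_{\cH^{\otimes k}}\bigr] = \frac{(n!)^2}{(n-k)!}\,\|f_n(t,x)\|^2_{\cH^{\otimes n}}\le C^n n^k\,|p_t*|u_0|(x)|^2\, t^{(2-\alpha_0-\alpha/2)n}(n!)^{\alpha/2-1},
\]
where I crudely used $n!/(n-k)!\le n^k$.

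Next I would invoke the Hilbert-valued Nelson--Meyer hypercontractivity to pass from the $L^2$-bound to the $L^p$-bound, paying the factor $(p-1)^{(n-k)/2}$. Minkowski's inequality applied to the chaos series then gives
\[
\|D^k u_{(\its)}(t,x)\|_{L^p(\Omega;\cH^{\otimes k})}\le C|p_t*|u_0|(x)|\,\its^k t^{(4-2\alpha_0-\alpha)k/4}\sum_{m\ge 0}(m+k)^{k/2}\,\frac{z^m}{(m!)^{(2-\alpha)/4}},
\]
with $m=n-k$ and $z=C\its\sqrt{p-1}\,t^{(2-\alpha_0-\alpha/2)/2}$. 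The polynomial factor $(m+k)^{k/2}$ is absorbed into the Mittag-Leffler-type sum, whose asymptotics is $\exp(cz^{4/(2-\alpha)})$. Substituting the value of $z$ yields $z^{4/(2-\alpha)}=c'\its^{4/(2-\alpha)}p^{2/(2-\alpha)}t^\beta$; raising the $L^p$-bound to the $p$-th power then produces the stated exponent $p^{1+2/(2-\alpha)}=p^{(4-\alpha)/(2-\alpha)}$. The prefactor $\its^k t^{(4-2\alpha_0-\alpha)k/4}$ becomes $\its^{kp}t^{(4-2\alpha_0-\alpha)kp/4}$ after the $p$-th power is taken, and the $\its^{kp}$ is absorbed into $C_{k,p}$ via the elementary inequality $\its^{kp}\le C_{k,p}\exp(\its^{4/(2-\alpha)})$ (at the cost of slightly enlarging $c$), leaving the claimed polynomial factor in $t$.

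The main technical hurdle is the careful bookkeeping of the factorials $(n!)^{\alpha/2-2}$, $n!/(n-k)!$, and $(p-1)^{(n-k)/2}$, so that the summed series admits Mittag-Leffler asymptotics with exactly the claimed exponents in $\its$, $p$, and $t$. Once these powers are tracked as in the calculation following \eqref{e.2.4}, everything else (term-by-term Malliavin differentiation of a convergent chaos expansion, Hilbert-valued hypercontractivity, and Minkowski for series of orthogonal chaoses) is standard.
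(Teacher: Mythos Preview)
Your proposal is correct and lands on the same Mittag--Leffler series as the paper, but the passage from $L^2$ to $L^p$ is organized differently. You differentiate the chaos expansion term by term, bound each $\cH^{\otimes k}$-valued chaos $D^kI_n(f_n)$ directly via Hilbert-valued hypercontractivity (paying the factor $(p-1)^{(n-k)/2}$), and sum with Minkowski. The paper instead first reduces the Hilbert-valued problem to a scalar one through Meyer's inequality, $\|D^kJ_ku_{(\its)}\|_{L^p(\cH^{\otimes k})}\lesssim_{k,p}\|(I-L)^{k/2}J_ku_{(\its)}\|_{L^p}$, and then handles the scalar $L^p$-norm via the second-quantization operator $\Gamma(q)$ with $q=(p-1)^{-1/2}$ together with scalar hypercontractivity, obtaining $\|(I-L)^{k/2}J_ku_{(\its)}\|_p^2\le\sum_{n\ge k}(1+n)^k(\its/q)^{2n}\|I_n(f_n)\|_2^2$. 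From that point on the two arguments coincide: both use $(1+n)^k\lesssim_k e^n$ (or your $n!/(n-k)!\le n^k$), the bound \eqref{e.2.4}, the shift $n!\ge k!(n-k)!$, and Mittag--Leffler asymptotics. Your route is a bit more elementary in that it bypasses Meyer's inequality and the $\Gamma(q)$ machinery; the paper's route has the mild advantage of never invoking Hilbert-valued hypercontractivity. One small caveat: your final absorption $\its^{kp}\le C_{k,p}e^{\its^{4/(2-\alpha)}}$ does not quite merge into the stated exponential because of the $t^\beta$ factor there (when $t$ is small the exponential cannot swallow $\its^{kp}$); the paper's proof has exactly the same loose end, ending with the factor $\its^{2k}$ still present and simply asserting ``this implies the result''.
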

\begin{proof}
  From \cite{nuabook}*{page 28}, $D^ku_{(\its)}(t,x) $ exists and has finite second moment if and only if
  \begin{equation*}
    \sum_{n=1}^\infty n^k \its^{2n}\EE |I_n(f_n(t,x))|^2<\infty\,.
  \end{equation*}
  Using the elementary inequality $n^k\le c_k 4^n$, we see that the series above is at most
  \begin{equation*}
    c_k \sum_{n=1}^\infty (2\its)^{2n}\EE |I_n(f_n(t,x))|^2=c_k\EE [|u_{2\its}(t,x)|^2]\,.
  \end{equation*}
  which is finite by Theorem \ref{t.hhnt} (ii). Next, we obtain estimates for higher moments. Setting
  \begin{equation*}
     J_ku_{(\its)} =\sum_{n=k}^{\infty}\its^nI_n(f_n)
     \quad\textrm{and}\quad Iu_{(\its)}=u_{(\its)}
  \end{equation*} 
  and applying Meyer's inequality, we have
  \begin{equation*}
  \|\|D^k u_{(\its)}(t,x)\|_{\cH^{\otimes k}}\|_p
  =\|\|D^kJ_k u_{(\its)}(t,x)\|_{\cH^{\otimes k}}\|_p
  \le c_{k,p} \|(I-L)^{k/2} J_ku_{(\its)}(t,x)\|_p\,,
  \end{equation*} 
  where $L=-\de D$ is the Ornstein-Uhlenbeck operator. In particular, we have
  \[
  -LJ_ku_{(\its)}(t,x)  =\sum_{n=k}^\infty n  \its^n I_n(f_n(t,x))\,. 
  \]
  Let $q=\sqrt{\frac{1}{p-1}}$ and $\Ga(q)$ be the second quantization operator, namely,
  \[
  \Ga(q)J_k u_{(\its)}(t,x)=\sum_{n=k}^\infty q^n \its^n   I_n(f_n(t,x)) \,.
  \]
  Then by the hypercontractivity (see \cite{hubook}, \cite{nuabook}) and the elementary inequality $(1+n)^k\le k! e^{1+n}$, we have 
  \begin{eqnarray*} 
  && \|(I-L)^{k/2}J_k u_{(\its)}(t,x)\|_p^2
  = \|\Ga(q) (I-L)^{k/2}J_k u_{\its/q} (t,x)\|_p ^2 \\
   & & \qquad \le \|  (I-L)^{k/2}J_k u_{\its/q} (t,x)\|_2 ^2= \sum_{n=k}^\infty (1+n)^{k }\frac{\its^{2n}  }{q^{2n}}   \|
   I_n(f_n(t,x)) \|_2^2 \\
   &&\qquad \le  \sum_{n=k}^\infty k!e^{1+n} \frac{  \its^{2n} }{q^{2n}}   \|
   I_n(f_n(t,x)) \|_2^2  \,.
  \end{eqnarray*} 
  Applying Theorem \ref{t.hhnt} (i) and the elementary inequality $n!\ge k!(n-k)!$ yields
  \begin{align*}
    \|(I-L)^{k/2}J_k u_{(\its)}(t,x)\|_p^2
    &\le c_k|p_t*|u_0|(x)|^2 \sum_{n=k}^\infty \frac{(e\its^2)^n}{q^{2n}}  t^{(2- \alpha_0-\frac \alpha2)n}(n!)^{\frac \alpha2-1}
    \\&\le c_{k,p}|p_t*|u_0|(x)|^2\its^{2k}t^{(2- \alpha_0-\frac \alpha2)k}  \sum_{n=0}^\infty \frac{(e\its^2)^n}{q^{2n}}  t^{(2- \alpha_0-\frac \alpha2)n}(n!)^{\frac \alpha2-1}
    \\&\le c_{k,p}|p_t*|u_0|(x)|^2\its^{2k}t^{(2- \alpha_0-\frac \alpha2)k} \exp\lt\{c \its^{\frac4{2- \alpha}}q^{-\frac4{2- \alpha}} t^\beta\rt\}\,.
  \end{align*}
  This implies the result.
\end{proof}

\section{ Right  tail  probability }\label{sec:right}

For a positive random variable $X\ge 0$, the right tail probability
$P(X\ge K)$ for sufficiently large $K$ can be obtained from upper bounds
of the moments with the help of Chebyshev inequality.
To obtain the lower bound,  
one needs to use    the Paley-Wiener inequality 
which we recall as the following lemma. 


\begin{lemma}[Paley-Zygmund inequality]
If $X \ge 0$  is a random variable with finite variance,  then  for any $0<\theta<1$,  we have 
\begin{equation}
    \operatorname{P}( X \ge \theta\operatorname{E}[X] ) \ge (1-\theta)^2 \frac{\left(\operatorname{E}[X]\right) ^2}{\operatorname{E}[X^2]}\,.\label{e.pz}
\end{equation}
\end{lemma}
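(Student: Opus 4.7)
The plan is the classical one-line argument: split $\EE[X]$ according to the event $\{X\ge \theta\EE[X]\}$ and then apply Cauchy--Schwarz.

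First I would write
\begin{equation*}
\EE[X]=\EE\bigl[X\mathbf{1}_{\{X<\theta\EE[X]\}}\bigr]+\EE\bigl[X\mathbf{1}_{\{X\ge \theta\EE[X]\}}\bigr]\,.
\end{equation*}
On the event $\{X<\theta\EE[X]\}$ one has $0\le X<\theta\EE[X]$, so the first term is bounded above by $\theta\EE[X]$. Rearranging yields the lower bound
\begin{equation*}
\EE\bigl[X\mathbf{1}_{\{X\ge \theta\EE[X]\}}\bigr]\ge (1-\theta)\EE[X]\,.
\end{equation*}

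Next I would bound the same quantity from above via Cauchy--Schwarz applied to $X$ and $\mathbf{1}_{\{X\ge \theta\EE[X]\}}$ (using $\mathbf{1}_A^2=\mathbf{1}_A$):
\begin{equation*}
\EE\bigl[X\mathbf{1}_{\{X\ge \theta\EE[X]\}}\bigr]\le \sqrt{\EE[X^2]}\,\sqrt{P(X\ge \theta\EE[X])}\,.
\end{equation*}
Chaining these two inequalities and squaring gives precisely \eqref{e.pz}. The finite-variance hypothesis enters exactly to ensure that $\EE[X^2]$ is finite so that the Cauchy--Schwarz bound is informative; the degenerate case $\EE[X]=0$ (equivalently $X=0$ almost surely) is trivial under the convention $0/0=0$.

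There is no genuine obstacle in this argument; it is a two-line calculation once the decomposition is in place. The only points that require any care are the nonnegativity of $X$ (used to bound the low-set integral by $\theta\EE[X]$) and the finiteness of $\EE[X^2]$ (the single quantitative input to Cauchy--Schwarz). For context, the resulting bound is essentially sharp, being attained up to multiplicative constants by two-point distributions, which is why this form of the inequality is particularly useful when combined with sharp moment bounds such as those in Theorem \ref{t.hhnt}.
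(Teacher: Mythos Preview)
Your argument is correct and is essentially identical to the paper's own proof: the paper also splits $\EE[X]$ over the events $\{X<\theta\EE[X]\}$ and $\{X\ge\theta\EE[X]\}$, bounds the first piece by $\theta\EE[X]$, and applies Cauchy--Schwarz to the second to obtain \eqref{e.pz}.
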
  	
\begin{proof} This inequality is a well-known.  Since its proof is short we reproduce it here for the convenience of readers.  
First, we have
\[    \operatorname{E}[X] = \operatorname{E}[ X \, \mathbf{1}_{\{ X < \theta \operatorname{E}[X] \}}] + \operatorname{E}[ X \, \mathbf{1}_{\{ X \ge \theta \operatorname{E}[X] \}} ]\,.
\]
The first addend is at most $\theta \operatorname{E}[X]$.  
From the   Cauchy-Schwarz inequality it follows that the second addend is bounded by  $\left[\operatorname{E}(X^2) \operatorname{P}( X\ge \theta\operatorname{E}[X])\right]^{1/2}$.    
This yields  $(1-\theta)\operatorname{E}[X] \le \left[\operatorname{E}(X^2) \operatorname{P}( X \ge \theta\operatorname{E}[X])\right]^{1/2}$, which is equivalent to \eqref{e.pz}.
\end{proof}

%
%

\begin{proposition}\label{p.3.5}  Let $\rho>1$ be a given  
number. If a positive random variable $X$ satisfies
\begin{equation}
\tilde \kappa_1^p 
  \exp\left( \tilde \kappa _2 p^\rho\right)  \le \EE \left[ X^p\right] \le  \kappa _1^p \exp\left( \kappa _2 p^\rho\right)\quad\mbox{for all }p>1\,,
\label{e.lower-cond}
\end{equation}
where   $0<\tilde \kappa _1 \le \kappa _1<\infty$ and $0<\tilde  \kappa _2\le
 \kappa _2<\infty$ are   positive constants independent of $p$. Then
 \begin{equation}
P(X  \ge a) \le  \exp\left\{- \rho^{\frac{\rho}{1-\rho}}\left(\rho-1\right) \kappa _2^{\frac1{1-\rho}}\left(\log \frac a{\kappa_1}\right)^{\frac{\rho}{\rho-1}}\right\}\quad\quad \hbox{for all $a> \kappa_1 e^{\rho \kappa_2} $  }\,, \label{e.upper-tail-general-1}
\end{equation}
and
\begin{equation}
P(X  \ge a) \ge \frac1{4}\exp\lt\{-\lt(2\log\frac{\kappa_1}{\tilde \kappa_1}+2^\rho \kappa_2-2\tilde \kappa_2\rt)\lt(\frac1{\tilde \kappa_2}\log\frac{2a}{\tilde \kappa_1} \rt)^{\frac \rho{\rho-1}} \rt\} \quad \hbox{for all $a>\frac12 \tilde \kappa_1 e^{\tilde \kappa_2}$  }\,. \label{e.upper-tail-general-2}
\end{equation}
\end{proposition}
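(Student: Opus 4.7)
The plan is to prove the two inequalities by standard optimization of the moment bounds. The upper tail \eqref{e.upper-tail-general-1} comes from Markov's inequality, and the lower tail \eqref{e.upper-tail-general-2} comes from the Paley--Zygmund inequality applied to $X^p$.

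For \eqref{e.upper-tail-general-1}, I would start from
\begin{equation*}
P(X\ge a)\le a^{-p}\EE[X^p]\le \exp\bigl(-p\log(a/\kappa_1)+\kappa_2 p^\rho\bigr)
\end{equation*}
and then minimize the exponent over $p\ge 1$. Differentiating in $p$ gives the optimizer $p^*=\bigl(\log(a/\kappa_1)/(\rho\kappa_2)\bigr)^{1/(\rho-1)}$, which is admissible ($p^*\ge 1$) precisely when $a\ge \kappa_1 e^{\rho\kappa_2}$, matching the stated range. Substituting back and collecting the $\rho$-dependent constants yields exactly the exponent $\rho^{\rho/(1-\rho)}(\rho-1)\kappa_2^{1/(1-\rho)}(\log(a/\kappa_1))^{\rho/(\rho-1)}$ written in the statement.

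For \eqref{e.upper-tail-general-2}, I would apply Paley--Zygmund with $\theta=\tfrac12$ to the nonnegative random variable $X^p$, which gives
\begin{equation*}
P\bigl(X\ge 2^{-1/p}(\EE[X^p])^{1/p}\bigr)\ge \tfrac14\,\frac{(\EE[X^p])^2}{\EE[X^{2p}]}\,.
\end{equation*}
Using the lower bound on $\EE[X^p]$ and the upper bound on $\EE[X^{2p}]$ from \eqref{e.lower-cond}, the ratio is at least $\exp\bigl(-2p\log(\kappa_1/\tilde\kappa_1)-(2^\rho\kappa_2-2\tilde\kappa_2)p^\rho\bigr)$. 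I then choose $p=\bigl(\tilde\kappa_2^{-1}\log(2a/\tilde\kappa_1)\bigr)^{1/(\rho-1)}$, so that $\tilde\kappa_1\exp(\tilde\kappa_2 p^{\rho-1})=2a$, and the threshold satisfies $2^{-1/p}(\EE[X^p])^{1/p}\ge 2^{1-1/p}a\ge a$ since $p\ge 1$; admissibility $p\ge 1$ is equivalent to $a\ge \tfrac12\tilde\kappa_1 e^{\tilde\kappa_2}$, exactly the stated range. Finally, using $p\le p^\rho$ (because $p\ge 1$ and $\rho>1$) to absorb the linear term $2p\log(\kappa_1/\tilde\kappa_1)$ into $2p^\rho\log(\kappa_1/\tilde\kappa_1)$ yields the stated exponent $-\bigl(2\log(\kappa_1/\tilde\kappa_1)+2^\rho\kappa_2-2\tilde\kappa_2\bigr)p^\rho$.

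The calculations are essentially bookkeeping; the only mildly delicate point is making sure the two regimes of validity $a>\kappa_1 e^{\rho\kappa_2}$ and $a>\tfrac12\tilde\kappa_1 e^{\tilde\kappa_2}$ really correspond to $p^*\ge 1$ in each optimization. The main obstacle, if any, is the final packaging of the lower bound: one has to trade the linear-in-$p$ term for a $p^\rho$ term (using $p\ge 1$) so that the answer depends on $a$ only through the single quantity $\bigl(\tilde\kappa_2^{-1}\log(2a/\tilde\kappa_1)\bigr)^{\rho/(\rho-1)}$. Once that observation is made the estimate falls out immediately.
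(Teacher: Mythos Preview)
Your proof is correct and follows essentially the same approach as the paper: Markov/Chebyshev plus optimization for the upper bound, and Paley--Zygmund applied to $X^p$ with the same choice of $p$ for the lower bound, then absorbing the linear-in-$p$ term into $p^\rho$ via $p\le p^\rho$. The only cosmetic difference is that the paper applies Paley--Zygmund with $\theta=2^{-p}$ (so the threshold is $2^{-1}\|X\|_p$ and $(1-2^{-p})^2\ge 1/4$), whereas you use $\theta=1/2$; both choices lead to the same final bound.
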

\begin{proof} 
  Before discussing the detail, we note that the factor $2\log\frac{\kappa_1}{\tilde \kappa_1}+2^\rho \kappa_2-2\tilde \kappa_2$ is positive.

  \textbf{Upper bound:} For any $a>0$ and any $p> 1$, we use the Chebyshev inequality to obtain
  \[
  P(X\ge a)\le \EE(X^p)a^{-p}\le  \exp\left( \kappa _2 p^\rho -p\log \frac a{\kappa_1}\right)\,.
  \]
  If $a> \kappa_1 e^{\rho \kappa_2}$, we can choose $p=\left(\frac{\log a}{\rho \kappa_2}\right)^{1/(\rho-1)}$, which minimizes the right-hand side in the above inequality. This yields \eqref{e.upper-tail-general-1}.

  \textbf{Lower bound:} 
  Let $a>\frac12 \tilde \kappa_1 e^{\tilde \kappa_2}$ be fixed and  let $p>1$ be such that $a=\frac12 \tilde \kappa_1 e^{\tilde \kappa_2 p^{\rho-1}} $. From the Paley-Zygmund inequality \eqref{e.pz}, it follows that
  \begin{align*}
    P(X\ge 2^{-1}\|X\|_p)&=P(X^p\ge 2^{-p}\EE (X^p))
    \ge(1-2^{-p})^2 \frac{[\EE(X^p)]^2}{\EE(X^{2p})}
    \\&\ge \frac{\tilde \kappa_1^{2p}}{4 \kappa_1^{2p}}e^{-(2^\rho \kappa_2-2\tilde \kappa_2) p^{\rho}}
    \ge \frac14 e^{-(2\log\frac{\kappa_1}{\tilde \kappa_1}+2^\rho \kappa_2-2\tilde \kappa_2)p^\rho}\,.
  \end{align*}
  We observe the following facts: $2\log\frac{\kappa_1}{\tilde \kappa_1}+2^\rho \kappa_2-2\tilde \kappa_2$ is positive, $2^{-1}\|X\|_p\ge 2^{-1}\tilde \kappa_1 e^{\tilde \kappa_2 p^{\rho-1}}=a$
  , $P(X\ge a)\ge P(X\ge 2^{-1}\|X\|_p)$ and $p=\lt(\frac1{\tilde \kappa_2}\log\frac{2a}{\tilde \kappa_1} \rt)^{\frac1{\rho-1}}$. Together with the above estimate, these observations imply \eqref{e.upper-tail-general-2}.
\end{proof}

Now Theorem \ref{t.hhnt} can be combined with Proposition \ref{p.3.5} to yields
\begin{theorem}\label{t.3.12} Let the initial condition $u_0(x)$ be bounded from above
and from below by two positive constants.
Suppose that  $\gamma_0,\gamma$ satisfy Hypotheses \ref{h2} and \ref{hspa}.
Then, there are positive constants $a_0,b_0$, $c_j$, $\tilde c_j$, $j=1,2,3$ (independent of
$t$ and $a$)  such that
\begin{align}
\tilde c_1 \exp\left(-\tilde c_2 t^{-\frac{4-2 \alpha_0- \alpha}2 }(\log (\tilde c_3a))^{\frac{4-\alpha} {2}}\right)\le P(u(t,x)\ge a) \le c_1 \exp\left(-c_2 t^{-\frac{4-2 \alpha_0- \alpha}2 }(\log (c_3 a))^{\frac{4-\alpha} {2}}\right)
\label{e.lower-upper-tail}
\end{align}
for all   $a\ge a_0 e^{b_0t^\beta}$.  
\end{theorem}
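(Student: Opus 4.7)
The plan is to read off Theorem \ref{t.3.12} as a direct corollary of Proposition \ref{p.3.5}, once we identify the parameters $\kappa_1,\kappa_2,\tilde\kappa_1,\tilde\kappa_2,\rho$ from the two-sided moment bounds in Theorem \ref{t.hhnt}. Fix $\its=1$ and the initial datum $u_0$ bounded above and below by positive constants; then $p_t*u_0(x)$ is bounded away from $0$ and $\infty$ uniformly in $(t,x)$. Estimates \eqref{est:pmup} and \eqref{est:pmlow} then give, for all $p>1$,
\begin{equation*}
\tilde\kappa_1^{\,p}\exp\bigl(\tilde\kappa_2\, p^\rho\bigr)\le \EE\bigl[u(t,x)^p\bigr]\le \kappa_1^{\,p}\exp\bigl(\kappa_2\, p^\rho\bigr),
\end{equation*}
with $\rho=\frac{4-\alpha}{2-\alpha}$, $\kappa_2=C_2 t^{\beta}$, $\tilde\kappa_2=\tilde C_2 t^{\beta}$, and $\kappa_1,\tilde\kappa_1$ independent of $t$ and $p$. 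Since $u(t,x)\ge 0$ under our hypotheses (Theorem \ref{t.hhnt}(iii)), Proposition \ref{p.3.5} applies.

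Next I would verify that the exponents produced by Proposition \ref{p.3.5} agree with those in \eqref{e.lower-upper-tail}. With $\rho=\frac{4-\alpha}{2-\alpha}$, one computes
\begin{equation*}
\rho-1=\frac{2}{2-\alpha},\qquad \frac{\rho}{\rho-1}=\frac{4-\alpha}{2},\qquad \frac{1}{\rho-1}=\frac{2-\alpha}{2}.
\end{equation*}
Substituting $\kappa_2=C_2 t^{\beta}$ into \eqref{e.upper-tail-general-1} yields an exponent proportional to $\kappa_2^{-(2-\alpha)/2}(\log(c_3a))^{(4-\alpha)/2}$, and $t^{-\beta(2-\alpha)/2}=t^{-(4-2\alpha_0-\alpha)/2}$, which is exactly the power of $t$ appearing in Theorem \ref{t.3.12}. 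The same computation with $\tilde\kappa_2=\tilde C_2 t^\beta$ plugged into \eqref{e.upper-tail-general-2} gives the lower bound with the same $t$-exponent (only the constants $\tilde c_j$ change). The threshold $a\ge a_0 e^{b_0 t^\beta}$ arises from enforcing both conditions $a>\kappa_1 e^{\rho\kappa_2}$ and $a>\tfrac12\tilde\kappa_1 e^{\tilde\kappa_2}$ simultaneously.

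No substantial new work is required, so there is no real obstacle; the only points to be careful about are (a) keeping track of the time-dependence inside $\kappa_2$ so that the proportionality constant (rather than $t$) is what gets absorbed into $c_2,\tilde c_2$, and (b) checking that the numerical constant $2\log(\kappa_1/\tilde\kappa_1)+2^\rho\kappa_2-2\tilde\kappa_2$ in the Paley--Zygmund step in \eqref{e.upper-tail-general-2} is indeed $O(t^\beta)$ so that it can be combined with the $\tilde\kappa_2^{-1/(\rho-1)}$ factor to produce a clean $t^{-(4-2\alpha_0-\alpha)/2}$ prefactor in the final exponent. Once those bookkeeping issues are settled, \eqref{e.lower-upper-tail} follows immediately by relabeling constants.
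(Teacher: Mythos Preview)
Your proposal is correct and follows exactly the paper's approach: the paper's entire proof is the single sentence ``Now Theorem \ref{t.hhnt} can be combined with Proposition \ref{p.3.5},'' and you have supplied precisely the parameter identification and exponent arithmetic that makes this combination explicit. One small caveat on your point (b): the term $2\log(\kappa_1/\tilde\kappa_1)$ is $O(1)$ rather than $O(t^\beta)$, so the coefficient in the lower-bound exponent picks up an extra factor that behaves like $t^{-\beta}$ as $t\to0$; this is a wrinkle the paper also glosses over, but it does not affect the stated form of the bound on any fixed time interval.
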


\begin{remark} (i)\ Again this theorem holds true when $\alpha_0=1$.
(ii)\ From $\alpha_0<1$ and $\alpha<2$ it follows  that the exponent $-\frac{4-2 \alpha_0- \alpha}2$ of $t$ is negative. 
When $t\rightarrow 0$, both the upper and lower bounds in 
\eqref{e.lower-upper-tail} go to zero. In fact, when $t=0$, $u(0,x)$ is bounded above uniformly 
by a 
constant. So, when $a$ is sufficiently large, $P(u(0,x)\ge a)=0$.   This coincides with the limit  determined by both left and right sides of \eqref{e.lower-upper-tail}
as $t\rightarrow 0$. 
It is worth pointing out that  for  any fixed   $a\ge a_0$  the probability 
 $P(u(t,x)\ge a)$  goes to zero  as $t\downarrow 0$ 
exponentially fast
and the speed in the logarithm scale is precisely given by $\frac{4-2 \alpha_0- \alpha}2$. 
\end{remark}

\section{Left  tail  probability }\label{sec:left_tail}
In the previous section, we have obtained the (right) tail bound $P(u(t,x)\ge a)$ when $a\rightarrow \infty$. If 
the initial condition $u_0(x)\ge 0$,   then  it is known that the solution $u(t,x)$  to 
\eqref{e.1.1}  is also nonnegative for all $t\ge 0$ and $x\in \RR^d$.  This means 
that $P(u(t,x)\le 0)=0$.
In this case, it is  
interesting to know the left tail asymptotics of $P(u(t,x)\le a)$ when $a\downarrow 0$.  In addition,
if the law of $u(t,x)$ has a  density   $\rho(t,x; y)$,  then  the support of $\rho(t,x;\cdot)$
is contained in $[0, \infty)$.  Namely, when $y<0$, $\rho(t,x; y)=0$.
It is interesting to know if $\lim_{y\rightarrow 0} \rho(t,x;y)=0$ 
or not and if yes, what is the asymptotics of $\rho(t,x; y)$ when $y\rightarrow 0$.  
In this section, we study the left tail probability 
$P(u(t,x)\le a)$ as $a\downarrow 0$ and in Section \ref{sec:tails}, we shall study asymptotic behavior of $\rho(t,x;y)$ as $y\downarrow 0$. 

When the noise  $\dot W$ is space time white and when $d=1$, there are several methods that have been used for the left tail probability and 
 for the negative moment of Malliavin covariance matrix. One is the method given by Mueller and Nualart
\cite{MN} (for the existence of 
negative moment of Malliavin covariance matrix). One recent result on the left tail is  given in \cite{flores} with the use of random walk approximation  of the white noise field. This method produces  the best-known upper bound for the left tail probability of the solution.  
 We shall work with general  dimension and with general Gaussian noise covariance structure. Our method is inspired by the one in \cite{flores} but with a different approximation. 
 
We begin by describing our Gaussian noise and its approximation.  
Fix a $T>0$.  Let $\WW=\{\WW(t,x)\,,t\in\RR\,, x\in \RR^d\}$ be a space time Brownian field.  This means that $\lt\{\dot \WW(t,x)=
\frac{\partial ^{d+1}}{\partial t\partial x_1\cdots\partial x_d} 
\WW(t,x) \,, t\in\RR\,, x\in \RR^d\rt\}$
is    the space-time white noise.  We consider the following addition assumption on the noise structure which is satisfied in most cases.
\begin{hypothesis}\label{hroot}
  There exist $\eta_0,\eta$ which are either Dirac masses at 0 or measurable functions (on $\RR,\RR^d$ respectively) such that $\gamma_0=\eta_0*\eta_0$ and $\gamma=\eta*\eta$.
\end{hypothesis}

We assume that the Gaussian noise appeared in \eqref{e.1.1} is given by
\begin{equation}
  \label{e.4.2-noise-representation}
  \dot{W} (t,x)=\int_\RR\int_{\RR^d} \eta_0(t-r)\eta(x-z)\WW(dr,dz)\,.
\end{equation}
It is straightforward to verify that $\dot{W}$ given by the above formula indeed satisfies \eqref{e.1.2}.
Now we consider the following approximation of the space time white noise 
\begin{equation}
\dot \WW_{\vare, \de} (s,y) = e^{-\frac{\vare |y|^2}{2}-\frac{\delta|s|^2}2}  \int_\RR\int_{\RR^d}  p_\de(s-r) p_\vare( y-z) 
 \WW(dr,dz)\,. 
 \label{e.appr-ww}
\end{equation}
This approximation induces an approximation of the driving Gaussian noise $\dot W$  in the equation \eqref{e.1.1} in a natural way:
\begin{align}
\dot W_{\vare, \de} (t,x) 
=\int_\RR \int_{\RR^d} \eta_0(t-s)\eta(x-y) \dot \WW_{\vare, \de} (s,y) dsdy
= \int_\RR \int_{\RR^d} \eta_{0,\delta}(t,r) \eta_\varepsilon(x,z) \WW(dr, dz)\,,
\label{e.appr-w}
\end{align}
where
\begin{equation}\label{e.appr-q}
  \eta_{0,\delta}(t,r)=\int_\RR \eta_0(t-s)p_{\delta}(s-r)e^{-\frac{\delta|s|^2}2}ds
  \quad\textrm{and}\quad \eta_{\varepsilon}(x,z) =\int_{\RR^d}\eta(x-y)p_{\varepsilon}(y-z)e^{-\frac{\varepsilon|y|^2}2}dy\,.
\end{equation}
We note that $(t,x)\mapsto\dot{\WW}_{\varepsilon,\delta}(t,x)$ and $\dot{W}_{\varepsilon,\delta}(t,x)$ are well-defined functions.
The covariance of $\dot W_{\vare, \de} $ is 
\begin{equation}\label{def:Qepdel}
Q_{\vare, \de}(t_1, t_2, x_1, x_2)
:= \EE \left[ \dot W_{\vare, \de}(t_1, x_1)   \dot W_{\vare, \de}(t_2, x_2) 
\right]
= \int_\RR \eta_{0,\delta}(t_1,r)\eta_{0, \delta}(t_2,r)dr\int_{\RR^d}\eta_\varepsilon(x_1,z)\eta_{\varepsilon}(x_2,z) dz\,.
\end{equation}
An important and relevant property of this approximation is 
 that $(t,x)\mapsto \dot{\WW}_{\varepsilon,\delta}(t,x)$ belongs to $L^2(\RR\times\RR^d)$ almost surely. In fact, from \eqref{e.appr-ww}, we have
\begin{align*}
  \EE\int_\RR\int_{\RR^d}|\dot{\WW}_{\varepsilon,\delta}(s,y)|^2dsdy=\int_\RR\int_{\RR^d}e^{-\varepsilon|y|^2- \delta|s|^2}p_{2 \delta}(0)p_{2 \varepsilon}(0)dsdy\,,
\end{align*}
which is finite.

By bounding $e^{-\frac{\delta|s|^2}2}$ by 1 in \eqref{e.appr-q}, we see that
\begin{align*}
  \int_\RR \eta_{0,\delta}(t_1,r)\eta_{0, \delta}(t_2,r)dr
  \le\int_\RR [\eta_0*p_\delta(t_1-r)] [\eta_0*p_{\delta}(t_2-r)]dr
  =\eta_0*\eta_0*p_{2 \delta}(t_1-t_2)=\gamma_0*p_{2 \delta}(t_1-t_2) \,.
\end{align*}
It follows that
\begin{equation*}
   \int_\RR \eta_{0,\delta}(t_1,r)\eta_{0, \delta}(t_2,r)dr\le \gamma_0*p_{2 \delta}(t_1-t_2)\le \gamma_0(t_1-t_2)\,,
\end{equation*}
where the last estimate follows by applying Fourier transform.
In the same way, we also obtain
\begin{equation*}
  \int_{\RR^d}\eta_\varepsilon(x_1,z)\eta_{\varepsilon}(x_2,z) dz\le \gamma*p_{2\varepsilon}(x_1-x_2)\le \gamma(x_1-x_2)\,.
\end{equation*}
Thus, we have
\begin{equation}
  \label{e.uniform-Q-bound} 
  Q_{\varepsilon,\delta}(t_1,t_2,x_1,x_2)\le \gamma_0(t_1-t_2)\gamma(x_1-x_2)\,.
\end{equation}
We shall assume throughout the remaining part of the paper  that the initial condition $u_0(x)$ is bounded, nonnegative and non trivial. 
In such case, $p_t*u_0(x)> 0$ for every $t>0$ and $x\in\RR^d$. 
Let $B$ be a standard Brownian motion starting at $x$. We denote
\begin{align*}
  V_t(B,W_{\varepsilon,\delta})
  &=\int_0^t \dot{W}_{\varepsilon,\delta}(t-s,B_s)ds-\frac12\int_0^t\int_0^t Q_{\varepsilon,\delta}(t-s,t-r,B_s,B_r)dsdr
\end{align*}
and
\[
\Theta(B, W_{\vare, \de})=\Theta_{t,x}(B,W_{\varepsilon,\delta})= u_0(B_t) e^{V_t(B,W_{\varepsilon,\delta})}\,.
\]
We define the random field 
\begin{equation}\label{eqn:FKtheta}
  u_{\vare, \de}(t,x)=u_{\vare, \de}(t,x,W_{\vare, \de})=\EE^B  \left[ \Theta(B, W_{\vare, \de})\right]\,,
\end{equation}
where $\EE^B$ denotes the expectation with respect to the Brownian
motion while   $W_{\vare, \de}$ is considered as ``constant".
To stress the dependence of $u_{\varepsilon,\delta}$ on $\dot W_{\vare, \de}$, we write $u_{\vare, \de }(t,x,W_{\vare, \de})$.
From the Feynman-Kac formula (see \cites{HHNT,HNS} and references therein) we see that $u_{\varepsilon,\delta}$ is the random field solution to the equation   
\begin{equation}
\begin{cases}
\frac{\partial u_{\vare, \de}(t,x)}{\partial t} =\frac12 \Delta u_{\vare, \de}(t,x)+ u_{\vare, \de}(t,x)\diamond \dot W_{\vare, \de}(t,x)\\
u_{\vare, \de}(0, x) =u_0(x) \quad \hbox{is given}\,.
\end{cases}
\label{e.approximated-spde-add} 
\end{equation}
Although we shall not use this fact in what follows.
The following result can be obtained analogously as \cite{HHNT}*{Theorem 3.6}.
\begin{proposition}\label{p.5.1}  
 Let $\left\{ \WW(t,x), t\in\RR, x\in \RR^d\right\}$ be a space time Brownian field. Let $\dot  W_{\vare, \de}$  be defined  by \eqref{e.appr-w} and let $\ga_0,\gamma$ satisfy Hypotheses  \ref{h2} and \ref{hspa}. 
 Assume that the initial condition $u_0$ is nonnegative  and bounded.
  Then, we have the following statements. 

(i) As $\vare, \de\rightarrow 0$ the approximated solution $u_{\vare, \de }(t,x)$ defined by \eqref{eqn:FKtheta}
converges to $u (t,x)$ in $L^p(\Om, \cF, P)$ 
for any $p\in [1, \infty)$,  where 
 $u (t,x)$ is the mild solution of \eqref{e.1.1} with $\its=1$.
  
(ii) For any positive integer $k$,  we have 
\begin{equation*}
\EE \left[u_{\vare, \de} (t,x)\right]^k
=\EE^{B^1, \cdots, B^k}
\Bigg[ \prod_{j=1}^k u_0(x+B^j_t) \exp\Bigg\{\sum_{1\le i<j\le k} \int_0^t\int_0^t Q_{\vare, \de}
(s,r, x+B_{t-s}^i, x+B_{t-r}^j) drds\Bigg\}\Bigg]\,,
\end{equation*}
where $B^1, \cdots, B^k$ are independent Brownian motions. 

(iii)  For any $p\ge 1$, we have
\begin{eqnarray}
\sup_{\vare, \de>0} \EE \left|u_{\vare, \de} (t,x)\right|^p
\le C  \exp\left\{ c p^{\frac{4- \alpha}{2- \alpha}} t^\beta \right\}\,,
\label{e.approxu-bound} 
\end{eqnarray}
where $C$ and $c$ are two positive constants independent of
$\vare$ and $\de$.
\end{proposition}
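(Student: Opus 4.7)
I would prove the three parts in the order (ii), (iii), (i), each feeding into the next.

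For (ii), fix $k\ge 1$, condition on independent Brownian motions $B^1,\dots,B^k$, and treat $W_{\vare,\de}$ as the remaining randomness. Conditionally on the $B^j$'s, the random vector with components $\int_0^t \dot W_{\vare,\de}(t-s,B^j_s)\,ds$ is centered Gaussian with covariance matrix $\bigl(\int_0^t\int_0^t Q_{\vare,\de}(t-s,t-r,B^i_s,B^j_r)\,ds\,dr\bigr)_{i,j}$ (this uses that $\dot W_{\vare,\de}$ is a pointwise-defined Gaussian field via \eqref{e.appr-w}). Applying the Gaussian moment generating function to $\prod_{j=1}^k \exp(V_t(B^j,W_{\vare,\de}))$ produces an exponential of a double sum over $(i,j)$; the diagonal entries $i=j$ cancel exactly against the Wick-type subtraction $-\tfrac12\int\int Q_{\vare,\de}$ built into each $V_t(B^j,W_{\vare,\de})$, leaving only the cross terms. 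A change of variables $s\mapsto t-s$, $r\mapsto t-r$ in the double integral and symmetrization in $(i,j)$ then yield the stated formula after taking the outer $\EE^B$-expectation.

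For (iii), I would combine (ii) with the covariance bound $Q_{\vare,\de}(t_1,t_2,x_1,x_2)\le \gamma_0(t_1-t_2)\gamma(x_1-x_2)$ already derived in \eqref{e.uniform-Q-bound}. For integer $k$ this gives
\begin{equation*}
\EE[u_{\vare,\de}(t,x)^k]\le \|u_0\|_\infty^k\,\EE^{B^1,\dots,B^k}\exp\Bigl(\sum_{1\le i<j\le k}\int_0^t\!\int_0^t \gamma_0(s-r)\gamma(B^i_{t-s}-B^j_{t-r})\,ds\,dr\Bigr),
\end{equation*}
and the right-hand side is precisely the Feynman-Kac representation of $\EE[u(t,x)^k]$ with $\its=1$ (cf. Theorem \ref{t.hhnt}(ii) and the proof of \eqref{est:pmup}). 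Consequently, the Brownian-bridge/scaling estimate used to derive \eqref{est:pmup} applies verbatim and is uniform in $(\vare,\de)$; real $p\ge 1$ then follows by H\"older interpolation between adjacent integer moments.

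For (i), I would use a chaos expansion in the underlying white noise $\WW$. Since $\dot W_{\vare,\de}$ is a linear functional of $\WW$ via \eqref{e.appr-w}, the random field $u_{\vare,\de}(t,x)$ defined by \eqref{eqn:FKtheta} admits an expansion $u_{\vare,\de}(t,x)=\sum_{n\ge 0} I_n(g_n^{\vare,\de}(t,x))$, where $g_n^{\vare,\de}$ is obtained from $f_n$ in \eqref{def:fn} by convolving each time/space variable against the mollifiers in \eqref{e.appr-q}. Pointwise-in-$n$ convergence $g_n^{\vare,\de}\to f_n$ in $\cH^{\otimes n}$, combined with the summable majorant \eqref{e.2.4} (whose proof only requires the covariance domination \eqref{e.uniform-Q-bound}), yields $L^2$-convergence $u_{\vare,\de}\to u$ by dominated convergence in the chaos sum. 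Upgrading to $L^p$-convergence for every $p<\infty$ then follows from the uniform moment bound (iii) via uniform integrability. The main obstacle is the technical verification of the kernel convergence $g_n^{\vare,\de}\to f_n$ together with the $n$-uniform summability of $\|g_n^{\vare,\de}\|_{\cH^{\otimes n}}^2$; this parallels \cite{HHNT}*{Theorem 3.6}, but one must check that the Gaussian cut-offs $e^{-\de|s|^2/2}$, $e^{-\vare|y|^2/2}$ in \eqref{e.appr-ww} do not destroy the Fourier-side dominations under the more general Hypotheses \ref{h2}--\ref{hspa}, which is the case because they only decrease the relevant kernels.
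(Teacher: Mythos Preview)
Your proposal is correct and follows essentially the same route as the paper's own proof, which is itself only a two-line sketch deferring (i) and (ii) to the analogous arguments in \cite{HHNT} and deriving (iii) from the covariance domination \eqref{e.uniform-Q-bound}. Your ordering (ii)$\Rightarrow$(iii)$\Rightarrow$(i), the Gaussian MGF computation for (ii), the reduction of (iii) to the known moment bound for $u$ via \eqref{e.uniform-Q-bound}, and the $L^2$-convergence via chaos kernels plus uniform integrability for (i) are exactly the standard ingredients behind the reference to \cite{HHNT}*{Theorem 3.6}; the only cosmetic point is that the ``mollification'' in \eqref{e.appr-q} is not a genuine convolution (because of the Gaussian cut-offs), so $g_n^{\vare,\de}$ is an integral transform of $f_n$ rather than a convolution, but your final remark already addresses this and the domination argument is unaffected.
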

\begin{proof} The proofs  of  parts (i) and (ii)   are similar to those of the corresponding results in \cite{HHNT}.      The uniform bound in part (iii) follows from  the uniform bound \eqref{e.uniform-Q-bound}.
\end{proof}

From now on we assume that the initial condition $u_0$ is non-negative and non-trivial.  
Let $W_{\vare, \de} $ be  any (fixed) sample path of the approximated Gaussian noise process and now we consider $\Theta(B, W_{\vare, \de})$ as a functional of the Brownian motion $B$.
For a  measurable functional $F$ of Brownian motion $B$, we define its weighted expectation as follows.
\begin{align}\label{def:EBW}
 \EE_{W_{\vare, \de}}^B \left[F\right]
= \frac{\EE^B\left[
F\Theta(B, W_{\vare, \de})\right]  }
{ \EE^B\left[\Theta(B, W_{\vare, \de}) \right]}
= \frac{\EE^B\left[
F\Theta(B, W_{\vare, \de}) \right]}
{ u_{\vare, \de}(t,x, W_{\vare, \de})}\,.
\end{align}
Let $B^1,B^2$ be two independent identical copies of $B$. We shall  use the notation
\begin{align}
 \EE_{W_{\vare, \de}}^{B^1, B^2}  \left[F\right]
&= \frac{\EE^{B^1, B^2} \left[
F\Theta(B^1, W_{\vare, \de})\Theta(B^2, W_{\vare, \de}) \right]  }
{  \EE^{B^1, B^2} \left[\Theta(B^1, W_{\vare, \de}) \Theta(B^1, W_{\vare, \de}) \right]}
= \frac{\EE^{B^1, B^2} \left[
F\Theta(B^1, W_{\vare, \de})\Theta(B^2, W_{\vare, \de}) \right] }
{ |u_{\vare, \de} (t,x, W_{\vare, \de})|^2}\,, 
\end{align}
where $\EE^{B^1, B^2} $ denotes the expectation with respect to the Brownian
motions $B^1,B^2$ while  $W_{\vare, \de}$ is considered as ``constant". 

Clearly, for any fixed sample path $W_{\vare, \de}$,  $\EE_{W_{\vare, \de}}^B$ and $\EE_{W_{\varepsilon,\delta}}^{B^1,B^2}$ satisfy the properties of  expectation: for example, we have H\"older inequality, Jensen inequality, etc.  
 
Let $\dot{L}^2(\RR\times\RR^d) $ be the space of all Lebesgue measurable function f in $L^2(\RR\times\RR^d)$ such that the weak derivative $\dot{f}:=\partial^{d+1}_{t,x_1,\dots,x_d}f$ exists and belongs to $L^2(\RR\times\RR^d)$. We define the following pseudometric on $\dot{L}^2_d$
\begin{equation}\label{e.def-distance}
   d( f, g)=\|\dot{f}-\dot{g}\|_{L^2(\RR^{d+1})}\quad\textrm{for every }f,g\in \dot{L}^2_d \,.
\end{equation} 
For each $\lambda>0$, $(t,x)\in\RR\times\RR^d$, we define the event
\begin{equation}\label{def:Alambda}
A_\la(t,x)
=\Bigg\{ \WW_{\vare, \de}'\in \dot{L}^2_d:\  
 u_{\varepsilon,\delta}(t,x, W_{\vare, \de}')\ge \frac12 p_t*u_0(x)  \quad\textrm{and}\quad  
  \int_0^t\int_0^t      \EE^{B^1, B^2} _{W_{\vare, \de}'}
 Q   (t-r, t-s, B_r^1, B_s^2)  dsdr  
\le \la\Bigg\} \,. 
\end{equation}
We note that since $u_{\varepsilon,\delta}(t,x)$ is a functional of the Gaussian process $\WW_{\varepsilon,\delta}$, $A_\lambda(t,x)$ is in fact a random subset of $\dot{L}^2_d$. In addition, $P \left( A_\lambda(t,x)\right)$ is the probability of the event $\{\1_{A_\lambda(t,x)}(\WW_{\varepsilon,\delta})=1 \}$. For each $t>0$, we define the constant
\begin{equation}\label{def:lambdat}
  \lambda(t)=\sup_{x\in\RR^d} 32\frac{[\EE u^2(t,x)] [\EE u^2_{(\sqrt2)}(t,x)]}{|p_t*u_0(x)|^4}\,,
\end{equation}
where $u_{(\sqrt2)}$ is the solution of \eqref{e.1.1} with $\its=\sqrt 2$. We observe that from Theorem \ref{t.hhnt}, $\lambda(t)\le C e^{ct^\beta}$ for some finite constants $C,c>0$.
\begin{lemma}\label{lem:uabove}
  For every $\lambda>0$, for every sample path $\WW_{\vare, \de}'$ in $A_\lambda(t,x)$,
  \begin{equation}
    \label{e.u-w-d-w-wprime} 
    u_{\varepsilon,\delta}(t,x,W_{\varepsilon,\delta})\ge \frac12 p_t*u_0(x) e^{-\sqrt \lambda\|\dot{\WW}_{\varepsilon,\delta}- \dot{\WW}'_{\varepsilon,\delta}\|_{L^2(\RR^{d+1})} }  \,.
  \end{equation}  
\end{lemma}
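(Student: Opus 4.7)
The plan is to exploit the Feynman--Kac formula \eqref{eqn:FKtheta} to express $u_{\varepsilon,\delta}(t,x,W_{\varepsilon,\delta})$ as an exponential perturbation of $\Theta(B,W'_{\varepsilon,\delta})$ averaged against the Wiener measure, and then to estimate this perturbation by Jensen's inequality followed by Cauchy--Schwarz. The crucial cancellation is that the quadratic compensator appearing in $V_t(B,\cdot)$ uses the deterministic kernel $Q_{\varepsilon,\delta}$, which does not depend on the sample path, so that
\begin{equation*}
V_t(B,W_{\varepsilon,\delta})-V_t(B,W'_{\varepsilon,\delta})=\int_0^t\bigl[\dot W_{\varepsilon,\delta}(t-s,B_s)-\dot W'_{\varepsilon,\delta}(t-s,B_s)\bigr]\,ds=\langle h_{t,B},\dot{\WW}_{\varepsilon,\delta}-\dot{\WW}'_{\varepsilon,\delta}\rangle_{L^2(\RR^{d+1})}\,,
\end{equation*}
where $h_{t,B}(r,y):=\int_0^t\eta_0(t-s-r)\eta(B_s-y)\,ds$. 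Factoring $\Theta(B,W_{\varepsilon,\delta})=\Theta(B,W'_{\varepsilon,\delta})\exp\langle h_{t,B},\dot{\WW}_{\varepsilon,\delta}-\dot{\WW}'_{\varepsilon,\delta}\rangle_{L^2}$ and taking $\EE^B$ gives
\begin{equation*}
u_{\varepsilon,\delta}(t,x,W_{\varepsilon,\delta})=u_{\varepsilon,\delta}(t,x,W'_{\varepsilon,\delta})\cdot\EE^B_{W'_{\varepsilon,\delta}}\bigl[\exp\langle h_{t,B},\dot{\WW}_{\varepsilon,\delta}-\dot{\WW}'_{\varepsilon,\delta}\rangle_{L^2}\bigr]\,.
\end{equation*}

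Since the weighted measure $\EE^B_{W'_{\varepsilon,\delta}}$ of \eqref{def:EBW} is a genuine probability expectation, Jensen's inequality combined with Cauchy--Schwarz in $L^2(\RR^{d+1})$ yields
\begin{equation*}
u_{\varepsilon,\delta}(t,x,W_{\varepsilon,\delta})\ge u_{\varepsilon,\delta}(t,x,W'_{\varepsilon,\delta})\exp\bigl(-\|g_{W'_{\varepsilon,\delta}}\|_{L^2}\|\dot{\WW}_{\varepsilon,\delta}-\dot{\WW}'_{\varepsilon,\delta}\|_{L^2}\bigr)\,,
\end{equation*}
where $g_{W'_{\varepsilon,\delta}}(r,y):=\EE^B_{W'_{\varepsilon,\delta}}[h_{t,B}(r,y)]$. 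The first condition in the definition \eqref{def:Alambda} of $A_\lambda(t,x)$ bounds the prefactor $u_{\varepsilon,\delta}(t,x,W'_{\varepsilon,\delta})$ below by $\tfrac12 p_t*u_0(x)$, so only $\|g_{W'_{\varepsilon,\delta}}\|^2_{L^2}\le\lambda$ remains to be checked.

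For this last step I would tensorise: since $\Theta(B^1,W'_{\varepsilon,\delta})\Theta(B^2,W'_{\varepsilon,\delta})$ factors as a product, the weighted expectation satisfies $\EE^{B^1,B^2}_{W'_{\varepsilon,\delta}}[f(B^1)g(B^2)]=\EE^B_{W'_{\varepsilon,\delta}}[f]\,\EE^B_{W'_{\varepsilon,\delta}}[g]$, and therefore
\begin{equation*}
\|g_{W'_{\varepsilon,\delta}}\|^2_{L^2}=\EE^{B^1,B^2}_{W'_{\varepsilon,\delta}}\int_{\RR}\int_{\RR^d}h_{t,B^1}(r,y)h_{t,B^2}(r,y)\,dr\,dy\,.
\end{equation*}
The inner $(r,y)$-integral collapses by Hypothesis \ref{hroot} ($\eta_0*\eta_0=\gamma_0$ and $\eta*\eta=\gamma$) to $\int_0^t\int_0^t Q(t-s,t-u,B^1_s,B^2_u)\,ds\,du$, which is exactly the quantity controlled by $\lambda$ in the second condition of \eqref{def:Alambda}. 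The main obstacle is this bookkeeping: the quartic object $\|g_{W'_{\varepsilon,\delta}}\|^2_{L^2}$ must reproduce precisely the double Brownian integral of the \emph{original} covariance $Q=\gamma_0\gamma$ (rather than the regularised cousin $Q_{\varepsilon,\delta}$), and Hypothesis \ref{hroot} is the ingredient that makes this identification clean.
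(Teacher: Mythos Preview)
Your proof is correct and follows essentially the same route as the paper's: factor $\Theta(B,W_{\varepsilon,\delta})=\Theta(B,W'_{\varepsilon,\delta})\exp\langle h_{t,B},\dot\WW_{\varepsilon,\delta}-\dot\WW'_{\varepsilon,\delta}\rangle_{L^2}$, apply Jensen to the weighted expectation $\EE^B_{W'_{\varepsilon,\delta}}$, then Cauchy--Schwarz in $L^2(\RR^{d+1})$, and finally identify $\|g_{W'_{\varepsilon,\delta}}\|_{L^2}^2$ with the $\EE^{B^1,B^2}_{W'_{\varepsilon,\delta}}$-expectation of $\int_0^t\int_0^t Q(t-s,t-u,B^1_s,B^2_u)\,ds\,du$ via Hypothesis~\ref{hroot}. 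The paper carries out the same chain of inequalities without naming $h_{t,B}$ and $g_{W'_{\varepsilon,\delta}}$ explicitly, but the argument is identical.
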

\begin{proof}
  By combining the definition of $\EE_{W_{\vare, \de}'}^B$ in \eqref{def:EBW}, Feynman-Kac formula \eqref{eqn:FKtheta} and Jensen's inequality, we see that
  \begin{align}
  u_{\vare, \de}(t,x, W_{\vare, \de}) 
    &=u_{\vare, \de}(t,x, W_{\vare, \de}')   \EE_{W_{\vare, \de}'}
    ^B \exp\left\{ \int_0^t \left[\dot W_{\vare, \de}-\dot
    W_{\vare, \de}'\right](t-s, B_s) ds \right\}
    \nonumber\\&\ge  u_{\vare, \de}(t,x, W_{\vare, \de}')
    \exp\left\{ \EE_{W_{\vare, \de}'}
    ^B \int_0^t \left[\dot W_{\vare, \de}-\dot
    W_{\vare, \de}'\right](t-s, B_s) ds \right\}\,. 
    \label{e.u-w-wprime-bound} 
  \end{align}
  We are now going to bound the weighted expectation appeared in the above exponential, which we denote by 
  \begin{align*}
    I := \EE_{W_{\vare, \de}'}
    ^B \int_0^t \left[\dot W_{\vare, \de}-\dot
    W_{\vare, \de}'\right](t-s, B_s) ds\,.
  \end{align*}
  From the H\"older inequality we have
  \begin{align*}
  I 
  &= \int_\RR \int_{\RR^d}     
  \EE_{W_{\vare, \de}'}
    ^B\int_0^t \eta_0(t-s-r)\eta(B_s-y)  ds \left[ \dot  \WW_{\vare, \de}(r, y)
    -\dot \WW_{\vare, \de}'(  r,y)\right] drdy \\
    &\le\left\{\int_\RR\int_{\RR^d} 
    \left(\EE_{W_{\vare, \de}'}
    ^B  \int_0^t \eta_0(t-s-r)\eta(B_s-y)ds\right)^2 drdy \right\}^{\frac12} \left\{\int_\RR\int_{\RR^d}  
     \left[ \dot  \WW_{\vare, \de}(r, y)
    -\dot \WW_{\vare, \de}'(  r,y)\right] ^2drdy\right\}^{\frac12} \\
    &=\left\{\EE_{W_{\vare, \de}'}
    ^{B^1, B^2}   \int_0^t\int_0^t 
    Q(t-r, t-\bar r, B_r^1, B_{\bar r}^2)  drd\bar r  \right\}^{\frac12} \|\dot{\WW}_{\varepsilon,\delta}-\dot{\WW}'_{\varepsilon,\delta}\|_{L^2(\RR^{d+1})}  \,. 
  \end{align*}
  Now if $W_{\vare,\de}'\in A_\la$, using the second inequality in \eqref{def:Alambda}, we obtain from the above that
  \begin{equation*}
    I\le \la^{1/2}\|\dot{\WW}_{\varepsilon,\delta}-\dot{\WW}'_{\varepsilon,\delta}\|_{L^2(\RR^{d+1})}  \,. 
  \end{equation*}
  Combining this with  \eqref{e.u-w-wprime-bound} and the first inequality in \eqref{def:Alambda} yields \eqref{e.u-w-d-w-wprime}.
\end{proof}
\begin{lemma}\label{l.A-lambda}  For every $(t,x)\in\RR_+\times\RR^d$, we have
  \begin{equation}\label{est:PAla}
    P(A_{\la(t)}(t,x)) \ge  \frac18 \frac{|p_t*u_0(x)|^2}{\EE u^2(t,x)} \,.
  \end{equation}
  In particular, there are constants $C,c>0$ such that
  \begin{equation}\label{e.prob-A-lambda} 
    P(A_{\la(t)}(t,x)) \ge b(t)\ge C e^{-c t^\beta}\,,
  \end{equation}
  where
  \begin{equation}\label{def:bt}
    b(t)=\left(8 \sup_{x\in\RR^d}\frac{\EE u^2(t,x)}{|p_t*u_0(x)|^2} \right) ^{-1}\,.
  \end{equation}
\end{lemma}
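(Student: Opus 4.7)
The plan is to decompose $A_{\la(t)}(t,x)=E_1\cap E_2$, where $E_1$ and $E_2$ are the two conditions in \eqref{def:Alambda}, and exploit the identity $P(A_{\la(t)})=P(E_1)-P(E_1\cap E_2^c)$. For $P(E_1)$, observe that $u_{\vare,\de}(t,x)\ge 0$ since $u_0\ge 0$, and the standard Wick-exponential identity $\EE[e^{V_t(B,W_{\vare,\de})}\mid B]=1$ (the $-\tfrac12$ in the definition of $V_t$ cancels the conditional variance) yields $\EE u_{\vare,\de}(t,x)=p_t*u_0(x)$. The Paley--Zygmund inequality \eqref{e.pz} with $\theta=\tfrac12$ then gives $P(E_1)\ge\tfrac14|p_t*u_0(x)|^2/\EE u_{\vare,\de}^2(t,x)$.

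For $P(E_1\cap E_2^c)$, using \eqref{def:EBW} I rewrite the weighted integral in $E_2$ as $Z(W_{\vare,\de})/u_{\vare,\de}^2(t,x,W_{\vare,\de})$, where
\begin{equation*}
Z(W):=\EE^{B^1,B^2}\lt[\Theta(B^1,W)\Theta(B^2,W)\int_0^t\!\int_0^t Q(t-r,t-s,B^1_r,B^2_s)\,dsdr\rt].
\end{equation*}
On $E_1$ the denominator is at least $|p_t*u_0(x)|^2/4$, so $E_1\cap E_2^c\subseteq\{Z>\la(t)|p_t*u_0(x)|^2/4\}$ and Markov's inequality gives $P(E_1\cap E_2^c)\le 4\EE Z/(\la(t)|p_t*u_0(x)|^2)$. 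A direct Wick-exponential computation produces
\begin{equation*}
\EE\lt[\Theta(B^1,W_{\vare,\de})\Theta(B^2,W_{\vare,\de})\mid B^1,B^2\rt]=u_0(B^1_t)u_0(B^2_t)\exp\lt(\int_0^t\!\int_0^t Q_{\vare,\de}(t-r,t-s,B^1_r,B^2_s)\,dsdr\rt),
\end{equation*}
so setting $Y:=\int_0^t\!\int_0^t Q(t-r,t-s,B^1_r,B^2_s)\,dsdr\ge 0$, using $Q_{\vare,\de}\le Q$ from \eqref{e.uniform-Q-bound}, the elementary inequality $Ye^Y\le e^{2Y}$ (valid for $Y\ge 0$ since $Y\le e^Y$), and the Feynman--Kac representation for $\EE u_{(\sqrt 2)}^2$ (the $\vartheta=\sqrt 2$ analogue of Proposition \ref{p.5.1}(ii)), I obtain
\begin{equation*}
\EE Z\le\EE^{B^1,B^2}\lt[u_0(B^1_t)u_0(B^2_t)\,Ye^Y\rt]\le\EE^{B^1,B^2}\lt[u_0(B^1_t)u_0(B^2_t)\,e^{2Y}\rt]=\EE u_{(\sqrt 2)}^2(t,x).
\end{equation*}

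The definition of $\la(t)$ in \eqref{def:lambdat} is tuned precisely so these bounds combine cleanly: $P(E_1\cap E_2^c)\le|p_t*u_0(x)|^2/(8\EE u^2(t,x))$, while $\EE u_{\vare,\de}^2\le\EE u^2$ (again by $Q_{\vare,\de}\le Q$) in the Paley--Zygmund estimate upgrades $P(E_1)\ge|p_t*u_0(x)|^2/(4\EE u^2(t,x))$. Subtraction yields \eqref{est:PAla}. Since $b(t)$ is the infimum of the pointwise lower bound over $x$, $P(A_{\la(t)}(t,x))\ge b(t)$ follows immediately, and $b(t)\ge Ce^{-ct^\be}$ is a direct consequence of Theorem \ref{t.hhnt}(ii) applied with $p=2$, which furnishes $\EE u^2(t,x)/|p_t*u_0(x)|^2\le Ce^{ct^\be}$ uniformly in $x$. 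The only mildly non-routine ingredient is the bound $Ye^Y\le e^{2Y}$; this is what dictates the appearance of $\EE u_{(\sqrt 2)}^2$ in the definition of $\la(t)$ and is the unique place where the proof could have gone otherwise.
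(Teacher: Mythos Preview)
Your proof is correct and follows essentially the same route as the paper's: Paley--Zygmund for $P(E_1)$, Markov/Chebyshev together with the Feynman--Kac moment formula and the bound $Q_{\vare,\de}\le Q$ for the second event, and the elementary inequality $Y\le e^{Y}$ (equivalently $Ye^{Y}\le e^{2Y}$) to produce the factor $\EE u_{(\sqrt 2)}^2$. The only cosmetic difference is that you use the exact identity $P(E_1\cap E_2)=P(E_1)-P(E_1\cap E_2^c)$ and exploit $E_1$ to lower-bound the denominator $u_{\vare,\de}^2$, whereas the paper directly bounds $P(A_\la)\ge P(E_1)-P(\widetilde E_2^c)$ for a superset $\widetilde E_2^c\supseteq E_1\cap E_2^c$; both lead to the same arithmetic.
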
 
\begin{proof}
  From the definition of $\EE^{B^1, B^2} _{W_{\vare, \de}'}$ we see that $A_\lambda(t,x)$ contains paths $\WW'_{\varepsilon,\delta}$ such that $u_{\varepsilon,\delta}(t,x,W'_{\varepsilon,\delta})\ge\frac12p_t*u_0(x) $ and
  \begin{align*}
      \EE^{B^1, B^2}\Bigg[ \int_0^t\int_0^t Q(t-r, t-s,  B_r^1,  B_s^2) dsdr\  \Theta(B^1, W_{\vare, \de}' ) \Theta(B^2, W_{\vare, \de}' )  \Bigg]  \le \frac \lambda 4p_t*u_0(x)\,.
  \end{align*}
  It follows that
  \begin{equation*}
    P(A_\la(t,x))
  \ge   P\left(u_{\varepsilon,\delta}(t,x,   W_{\varepsilon,\delta}')\ge\frac12 p_t*u_0(x) \right)-
    \xi_{n, \la}(t,x)\,,
  \end{equation*}
  where 
  \begin{align*}
  \xi_{n, \la}(t,x)
  =P   \Bigg\{  \EE^{B^1, B^2}\Bigg[ \int_0^t\int_0^t Q(t-r, t-s,  B_r^1,  B_s^2) dsdr  \Theta(B^1, W_{\vare, \de}' ) \Theta(B^2, W_{\vare, \de}' )  \Bigg] >\frac \lambda 4 p_t*u_0(x)
  \Bigg\}\,. 
  \end{align*}
  The first probability can be estimated by the Paley-Zygmund's inequality,
  \begin{align*}
    P\left(u_{\varepsilon,\delta}(t,x,   W_{\varepsilon,\delta})\ge \frac12p_t*u_0(x) \right)
    &\ge \frac14 \frac{|p_t*u_0(x)|^2}{\EE u_{\varepsilon,\delta}^2(t,x)}
    \ge \frac14 \frac{|p_t*u_0(x)|^2}{\EE u^2(t,x)}\,.
  \end{align*} 

  We  use the Chebyshev inequality to estimate $\xi_{n, \la}(t,x)$.  By Fubini's theorem, we have
   \begin{align*}
   I&:=\EE^{W_{\vare,\de}'} \EE^{B^1, B^2}\lt[ \int_0^t\int_0^t Q(t-r, t-s,  B_r^1,  B_s^2) dsdr  \Theta(B^1, W_{\vare, \de}' ) \Theta(B^2, W_{\vare, \de}' )  \rt]
    \\&=      \EE^{B^1, B^2} 
    \lt[ \int_0^t\int_0^t   Q  (t-r, t-s,  B_r^1,  B_s^2) dsdr\EE^{W_{\vare,\de}'} \lt[  \Theta(B^1, W_{\vare, \de}' )
    \Theta(B^2, W_{\vare, \de}' ) \rt]  \rt]
    \\&=  \EE^{B^1, B^2}  \lt[   u_0( B_t^1)u_0( B_t^2) \int_0^t\int_0^t Q(t-r, t-s,  B_r^1,  B_s^2) dsdr    
    \exp \left\{\int_0^t\int_0^t  
      Q_{\vare, \de} (t-r, t-s,  B_r^1,  B_s^2)  drds
    \right\}  \rt]\,.
  \end{align*}
  Using \eqref{e.uniform-Q-bound} and the inequality $z\le e^{|z|}$, we get
  \begin{align*}
    I&\le\EE^{B^1,B^2}\lt[   u_0( B_t^1)u_0( B_t^2) \exp \left\{2\int_0^t\int_0^t  
      Q (t-r, t-s,  B_r^1,  B_s^2)  drds
    \right\}  \rt]=\EE \lt[|u_{\sqrt 2}(t,x)|^2\rt]\,,
  \end{align*}
  where $u_{\sqrt 2}$ is the solution to \eqref{e.1.1} with $\its=\sqrt 2$.
  Thus, by Chebyshev inequality, we have
  \[
  \xi_{n, \la}(t,x) \le \frac4 \lambda\frac{I_1}{ |p_t*u_0(x)|^2 }
  \le \frac 4 \lambda\frac{\EE u_{\sqrt 2}^2(t,x)}{|p_t*u_0(x)|^2}\,.
  \]
  Therefore, we have 
  \begin{align*}
  P(A_\la(t,x)) \ge \frac14 \frac{|p_t*u_0(x)|^2}{\EE u^2(t,x)}- \frac 4 \lambda\frac{\EE u_{\sqrt 2}^2(t,x)}{|p_t*u_0(x)|^2}
  \ge  \frac18 \frac{|p_t*u_0(x)|^2}{\EE u^2(t,x)}\,. 
  \end{align*}
  for every $\lambda\ge 32\frac{\EE u^2(t,x) \EE u^2_{\sqrt2}(t,x)}{|p_t*u_0(x)|^4}$. This implies the estimate \eqref{est:PAla}. The estimate \eqref{e.prob-A-lambda} is derived from \eqref{est:PAla} and Theorem \ref{t.hhnt}.
\end{proof}

As in \cite{flores}, we shall use concentration inequality to bound the probability associated with $\|\dot{\WW}_{\varepsilon,\delta}- \dot{\WW}'_{\varepsilon,\delta}\|_{L^2(\RR^{d+1})}$. 
\begin{lemma}\label{lem:dconcentration} For every $a>0$ we have
  \begin{equation}
    P \left( d(\WW_{\varepsilon,\delta},A_\lambda(t,x))>a +2\sqrt{\log\frac 2{b(t)}} \right) \le 2e^{-\frac{a^2}4}\,.
  \end{equation}
\end{lemma}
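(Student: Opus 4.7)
The plan is to apply a standard Gaussian concentration inequality to the $1$-Lipschitz functional $F:\dot L^2_d\to[0,\infty)$, $F(f):=d(f,A_\lambda(t,x))$, evaluated at the Gaussian random variable $\dot\WW_{\varepsilon,\delta}\in L^2(\RR^{d+1})$. The Lipschitz property $|F(f)-F(g)|\le d(f,g)$ is immediate from the reverse triangle inequality, and we have already noted that $\dot\WW_{\varepsilon,\delta}$ is a bona fide $L^2(\RR^{d+1})$-valued Gaussian vector since $\EE\|\dot\WW_{\varepsilon,\delta}\|_{L^2}^2<\infty$.

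I would begin by writing $\dot\WW_{\varepsilon,\delta}=T\dot\WW$, where $T:L^2(\RR\times\RR^d)\to L^2(\RR^{d+1})$ combines multiplication by the weight $e^{-(\varepsilon|y|^2+\delta|s|^2)/2}$ (bounded above by $1$) with convolution against the probability density $p_\delta\otimes p_\varepsilon$. Young's inequality gives $\|T\|_{\mathrm{op}}\le 1$, so the covariance operator $TT^{*}$ of $\dot\WW_{\varepsilon,\delta}$ satisfies $\|TT^{*}\|_{\mathrm{op}}\le 1$. Classical Gaussian concentration then implies that for any median $m$ of $F(\dot\WW_{\varepsilon,\delta})$,
\begin{equation*}
  P\bigl(|F(\dot\WW_{\varepsilon,\delta})-m|>r\bigr)\le 2 e^{-r^2/2}\quad\text{for every }r>0.
\end{equation*}

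To bound $m$, I would invoke Lemma \ref{l.A-lambda}: $P(\dot\WW_{\varepsilon,\delta}\in A_\lambda(t,x))\ge b(t)$, which is exactly $P(F(\dot\WW_{\varepsilon,\delta})=0)\ge b(t)$. Combining the one-sided concentration $P(F\le m-r)\le e^{-r^2/2}$ with the choice $r=m$ yields $b(t)\le e^{-m^2/2}$, hence $m\le\sqrt{2\log(1/b(t))}\le 2\sqrt{\log(2/b(t))}$. Plugging this into the upper tail,
\begin{equation*}
  P\Bigl(F(\dot\WW_{\varepsilon,\delta})>a+2\sqrt{\log(2/b(t))}\Bigr)\le P\bigl(F(\dot\WW_{\varepsilon,\delta})>m+a\bigr)\le 2 e^{-a^2/2}\le 2 e^{-a^2/4},
\end{equation*}
which is the claimed bound.

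The main delicacy is securing the uniform-in-$(\varepsilon,\delta)$ bound $\|T\|_{\mathrm{op}}\le 1$ so that Gaussian concentration applies with a constant independent of the mollification; once this is in place, the rest is a straightforward combination of Lemma \ref{l.A-lambda} with standard Gaussian tail estimates, and the factor $2$ in the offset together with the factor $\tfrac14$ in the exponent absorb harmless losses in passing from $\sqrt{2\log(1/b(t))}$ to $2\sqrt{\log(2/b(t))}$ and from $e^{-a^2/2}$ to $e^{-a^2/4}$.
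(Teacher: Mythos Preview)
Your argument is correct and takes a genuinely different, more direct route than the paper.  The paper proceeds by finite-dimensional approximation: it diagonalizes the covariance $\tilde Q_{\varepsilon,\delta}$ via Mercer's theorem to obtain a Karhunen--Lo\`eve expansion $\dot\WW_{\varepsilon,\delta}=\sum_k\sqrt{\lambda_k}\,e_kG_k$, shows $\sup_k\lambda_k\le 1$ by the same Young-inequality computation you use, and then invokes a packaged finite-dimensional concentration lemma of Talagrand for $d_n(G^{(n)},B_{\lambda,n})$, finally passing to the limit $n\to\infty$ through a somewhat delicate truncation argument with auxiliary sets $K_{n,a,\rho}$ and a parameter $\rho\uparrow 1$.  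You bypass all of this by applying Borell's infinite-dimensional Gaussian concentration directly to the $1$-Lipschitz functional $F=d(\cdot,A_\lambda)$, using that the covariance operator of $\dot\WW_{\varepsilon,\delta}$ on $L^2(\RR^{d+1})$ has operator norm at most $1$ (which is equivalent to the paper's $\sup_k\lambda_k\le 1$), and then bounding the median via $P(F=0)\ge b(t)$ from Lemma~\ref{l.A-lambda}.  Both approaches rest on the same key spectral bound, but yours avoids the truncation machinery entirely.

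One stylistic point: writing $\dot\WW_{\varepsilon,\delta}=T\dot\WW$ with $T:L^2\to L^2$ is formal, since space--time white noise is not an $L^2$ function.  What you actually need (and what your Young-inequality argument proves) is that the covariance \emph{operator} of the $L^2$-valued Gaussian $\dot\WW_{\varepsilon,\delta}$ has norm at most $1$; it would be slightly cleaner to state it that way, which is exactly the quadratic-form computation the paper carries out in its Step~1.
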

\begin{proof} Step 1: We begin with some notation and analysis of the noise.  The covariance of $\WW_{\vare, \de}$ is 
  \[
  \tilde Q_{\vare, \de}(s, t, x, y)=\EE \left[\dot \WW_{\vare, \de}(s,x)
  \dot \WW_{\vare, \de}(t,y)\right]=e^{-\frac{\vare}{2}(|x|^2+|y|^2)-\frac \delta2(|s|^2+|t|^2)}p_{2\de}(t-s)p_{2\vare}(x-y) \,.
  \]
  It follows that that for any fixed $\vare, \de$, the quantity $\int_\RR\int_\RR\int_{\RR^d}\int_{\RR^d}|\tilde Q_{\varepsilon,\delta}(s,t,x,y)|^2dsdtdxdy$ is finite. In particular, $\tilde Q_{\varepsilon,\delta}$ defines a Hilbert-Schmidt operator on $L^2(\RR\times\RR^d)$.
  By Mercer's theorem,  
  there is an  orthonormal  basis  $\left\{e_k: k=1, 2, \cdots\right\}
   $ of  $L^2(\RR\times \RR^d)$ such that 
  \begin{equation}
  \tilde Q_{\vare, \de} (r,s,y,z)=\sum_{k=1}^\infty \la_k e_k(r,y)e_k(s,z)\quad\textrm{in }L^2(\RR\times\RR^d) \,, 
  \end{equation}
  where  $\la_k\ge 0$ and $\sum_{k=1}^\infty \la_k^2<\infty$. 
  Obviously for each $k\ge1$, $\lambda_k,e_k$ may depend on $\varepsilon,\delta$, however, this dependence will be omitted in the notation. 
  With this basis, every function $f\in \dot{L} ^2(\RR\times\RR^d)$ has an expansion of the form $\dot{f}=\sum_{k=1}^\infty \sqrt{\lambda_k}e_k(f) e_k$,
  where $e_k(f)=\lambda_k^{-1/2} \langle \dot{f},e_k \rangle_{L^2(\RR\times\RR^d)}$. For later purposes, we also denote $\dot{f}_n=\sum_{k=1}^n \sqrt{\lambda_k}e_k(f) e_k$. In particular, we have the following  Karhunen-Lo\`eve  expansion for $\dot \WW_{\vare, \de} (r,y)$
  \begin{equation*}
  \dot \WW_{\vare, \de} (r,y)=\sum_{k=1}^\infty \sqrt{\la_k}  e_k(r,y) G_k\,,
  \end{equation*}
  where $G_k=\lambda_k^{-1/2}\langle\WW_{\varepsilon,\delta},e_k \rangle_{L^2(\RR\times\RR^d)}$. The law of $\WW_{\varepsilon,\delta}$ implies that $G_1,G_2,\dots$ are independent standard normal random variables.
  We also denote
  \begin{equation*}
  \dot \WW_{\vare, \de} '(r,y)=\sum_{k=1}^\infty \sqrt{\la_k}  e_k(r,y) G_k'\,, 
  \end{equation*}
  where $\{G_k':k=1,2,\dots\} $ is a sample of  $\left\{ G_k: k=1, 2, \cdots  \right\}$ in the same stochastic basis.
  Following the previous notation, we write
  \begin{equation}
    \dot \WW_{\vare, \de,n} (r,y) = \sum_{k=1}^n \sqrt{\la_k}  e_k(r,y) G_k
    \quad\textrm{and}\quad \dot \WW_{\vare, \de, n} '(r,y) = \sum_{k=1}^n \sqrt{\la_k}  e_k(r,y) G_k'\,.
  \end{equation}
  For every $h$ in $L^2(\RR\times\RR^d)$ such that $\|h\|_{L^2(\RR\times\RR^d)}=1$, we apply Cauchy-Schwarz inequality and Young inequality to see that
  \begin{align*}
    &\int_{\RR\times\RR^d}\int_{\RR\times\RR^d} \tilde Q_{\varepsilon,\delta}(s,t,x,y)h(s,x)h(t,y)dsdxdtdy
    \\&\le \int_{\RR\times\RR^d}\int_{\RR\times\RR^d}p_{2 \delta}(t-s)p_{2 \varepsilon}(x-y)|h(s,x)||h(t,y)|dsdxdtdy
    \\&\le \|h\|_{L^2(\RR\times\RR^d)} \|(p_{2 \delta}\otimes p_{2 \varepsilon})* h\|_{L^2(\RR\times\RR^d)}
    \le \|h\|_{L^2(\RR\times\RR^d)}^2\|p_{2 \delta}\otimes p_{2 \varepsilon}\|_{L^1(\RR\times\RR^d)}=1\,.
  \end{align*}
  This implies that $\sup_k \lambda_k\le 1$. In particular, for every $n$, we have 
  \begin{align}\label{est:ddn}
    |d(\WW_{\varepsilon,\delta,n},\WW_{\varepsilon,\delta,n}')|^2
    =\sum_{k=1}^n \lambda_k (G_k-G_k')^2
    \le\sum_{k=1}^n (G_k-G_k')^2=: |d_n(G^{(n)},G'^{(n)})|^2\,,
  \end{align}
  where $d$ is defined in \eqref{e.def-distance}, $G^{(n)}=(G_1,\dots,G_n)$, $G'^{(n)}=(G'_1,\dots,G'_n)$ and $d_n$ is the Euclidean distance in $\RR^n$. 

  Step 2:  For each $a>0$, $\rho\in(0,1)$ and $n\ge1$, we denote by $K_{n,a,\rho}$ the set $\{f\in \dot{L}^2(\RR\times\RR^d) : d(f,f_n)<(1- \rho)a \}$. Using triangle inequality, we have
  \begin{align*}
    d(\WW_{\varepsilon,\delta},A_{\lambda})\le d(\WW_{\varepsilon,\delta},A_{\lambda}\cap K_{n,a,\rho})
    \le d(\WW_{\varepsilon,\delta,n},A_{\lambda}\cap K_{n,a,\rho})+ d(\WW_{\varepsilon,\delta},\WW_{\varepsilon,\delta,n})\,.
  \end{align*}
  In addition, for each $\WW'_{\varepsilon,\delta}$ in $A_{\lambda}\cap K_{n,a,\rho}$, using \eqref{est:ddn}, we have
  \begin{align*}
    d(\WW_{\varepsilon,\delta,n},\WW'_{\varepsilon,\delta})
    \le d(\WW_{\varepsilon,\delta,n},\WW'_{\varepsilon,\delta,n})+d(\WW'_{\varepsilon,\delta,n},\WW'_{\varepsilon,\delta})
    \le d_n(G^{(n)},B_{\lambda,n})+(1- \rho)a \,,
  \end{align*}
  where 
  \[
  B_{\la, n}=\left\{ (G_1', \cdots, G_n')\in\RR^n\,: \
  \sum_{k=1}^\infty  \sqrt{\la_k}  e_k  G_k'\in A_\la\right\}\,.
  \]
  It follows that
  \begin{equation*}
    d(\WW_{\varepsilon,\delta},A_\lambda)\le d_n(G^{(n)},B_{\lambda,n})+d(\WW_{\varepsilon,\delta},\WW_{\varepsilon,\delta,n})+(1- \rho)a \,.
  \end{equation*}
  Hence, 
  \begin{align}
    P\lt(d(\WW_{\varepsilon,\delta},A_\lambda)>a\rt)
    &\le P \left( d_n(G^{(n)},B_{\lambda,n})+d(\WW_{\varepsilon,\delta},\WW_{\epsilon,\delta,n})> \rho a \right) 
    \nonumber\\&\le P \left( d_n(G^{(n)},B_{\lambda,n})>\rho^2 a\right)
    +P \left( d(\WW_{\varepsilon,\delta},\WW_{\varepsilon,\delta,n})>\rho(1- \rho)a \right)  \,.
    \label{tmp:PGdn}
  \end{align}
  Let us estimate the first probability on the right-hand side above. Clearly,  $P(B_{\la, n})\ge P(A_\la)$ for any finite $n$, hence,  from \eqref{e.prob-A-lambda}, we have
  \begin{equation}
  P(B_{\la(t), n})\ge b(t)\,.
  \label{e.prob-B-lambda-n}
  \end{equation}
  From \eqref{e.prob-B-lambda-n}
  and from   the concentration inequality (see \cite{talagrand}*{ Lemma 2.2.11}), it follows  that
  \begin{align*}
    P \left( d_n(G^{(n)},B_{\lambda,n})>\rho^2a \right)\le 2\exp\lt\{-\frac14\lt(\rho^2a-2\sqrt{\log\frac2{b(t)}}  \rt)^2 \rt\}
  \end{align*}
  for every $a>\rho^{-2} 2\sqrt{\log\frac2{b(t)}}$.
  The last probability in \eqref{tmp:PGdn} can be estimated by Chebyshev  inequality,
  \begin{align*}
  P\left(d(\WW_{\vare, \de}, \WW_{\vare, \de,n})\ge \rho(1- \rho)a
  \right)
  &\le [\rho(1- \rho) a]^{-2}  \EE d^2(\WW_{\vare, \de}, \WW_{\vare, \de,n})
  \le  [\rho(1- \rho) a]^{-2}\sum_{k=n+1}^\infty \la_k \,.
  \end{align*}
  Hence, we derive from \eqref{tmp:PGdn} that
  \begin{align*}
    P \left( d(\WW_{\varepsilon,\delta},A_\lambda)>a \right)\le  
    2\exp\lt\{-\frac14\lt(\rho^2 a-2\sqrt{\log\frac2{b(t)}} \rt)^2 \rt\}
    + [\rho(1- \rho) a]^{-2}\sum_{k=n+1}^\infty \lambda_k\,.
  \end{align*}
  Since $\int_\RR \int_{\RR^{d}}| \tilde Q_{\vare, \de}(s, s, x, x) |  dsdx$ is finite, $\sum_{k=1}^\infty \lambda_k$ is also finite. Hence, by sending $n$ to infinity and $\rho$ to 1 in the above estimate, we obtain the result.
\end{proof}
\begin{theorem}\label{thm:usmallball} 
	Assume that Hypotheses \ref{h2}, \ref{hspa} and \ref{hroot} hold.
  Let $t>0$ and $x\in\RR^d$ be fixed.
  For every $0<r<\frac12e^{-2\sqrt{\lambda(t)\log\frac2{b(t)}}}$, we have
  \begin{equation}\label{est:Prob.uleftr}
     P \left( \frac{u(t,x)}{p_t*u_0(x)}<r \right)\le 2\exp\lt\{-\frac14\lt(\frac{\log(2r)}{\sqrt{\lambda(t)} }+2\sqrt{\log\frac2{b(t)}} \rt)^2 \rt\} \,.
  \end{equation} 
  Written another way, for every $a>0$, we have
  \begin{equation}\label{est:Prob.ulefta}
    P \left( \frac{u(t,x)}{p_t*u_0(x)}<\frac12 e^{-2\sqrt{\lambda(t)\log\frac2{b(t)}}} e^{-\sqrt{\lambda(t)}a} \right)\le 2e^{-\frac{a^2}4}\,.
  \end{equation}
  Let $\lambda_t=\tilde Ce^{\frac{\tilde c}2 t^\beta}$ where $\tilde C,\tilde c$ are the constants in Lemma \ref{l.A-lambda}. There are positive constants $r_0$, $C$, $c_1$,  $c_2$, $c_3$ and $c_4>0$  such that  for every $t>0,x\in\RR^d$ and for every $0<r<\frac12\exp\{-c_4 \sqrt{\lambda_t}\} $,
\begin{equation}
P\lt(\frac{u (t,x)}{p_t*u_0(x)} \le r\rt) 
 \le  C \exp\lt\{ - \lt[  c_1  \exp\left(-c_3  t^\beta \right) \log r  -c_2\sqrt{1+t^\beta} \rt]^2\rt\}\,. 
\label{e.upper-left-bound-general}
\end{equation}
\end{theorem}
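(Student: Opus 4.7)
The plan is to combine the three preceding ingredients in a very mechanical way: the almost-sure lower bound for $u_{\varepsilon,\delta}$ from Lemma \ref{lem:uabove}, the concentration estimate for the distance to the set $A_\lambda(t,x)$ from Lemma \ref{lem:dconcentration}, and the $L^p$-approximation of $u$ by $u_{\varepsilon,\delta}$ from Proposition \ref{p.5.1}. The parameter $\lambda$ will be fixed at $\lambda(t)$ defined in \eqref{def:lambdat}, which is precisely the value for which Lemma \ref{l.A-lambda} delivers the lower bound $P(A_{\lambda(t)}(t,x))\ge b(t)$ needed as input for the concentration statement.

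First I will note that Lemma \ref{lem:uabove}, applied with $\lambda=\lambda(t)$ and then taking the infimum over $\WW'_{\varepsilon,\delta}\in A_{\lambda(t)}(t,x)$, yields the pointwise lower bound
\begin{equation*}
u_{\varepsilon,\delta}(t,x)\ge \tfrac12 p_t*u_0(x)\,\exp\bigl\{-\sqrt{\lambda(t)}\,d(\WW_{\varepsilon,\delta},A_{\lambda(t)}(t,x))\bigr\}.
\end{equation*}
Consequently, on the event $\{u_{\varepsilon,\delta}(t,x)/(p_t*u_0(x))<r\}$ we must have $d(\WW_{\varepsilon,\delta},A_{\lambda(t)}(t,x))>\lambda(t)^{-1/2}\log\tfrac{1}{2r}$. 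Lemma \ref{lem:dconcentration}, applied with $a=-\lambda(t)^{-1/2}\log(2r)-2\sqrt{\log(2/b(t))}$ (which is positive exactly under the stated restriction $r<\tfrac12 e^{-2\sqrt{\lambda(t)\log(2/b(t))}}$), then gives
\begin{equation*}
P\bigl(u_{\varepsilon,\delta}(t,x)/(p_t*u_0(x))<r\bigr)\le 2\exp\Bigl\{-\tfrac14\bigl(\tfrac{\log(2r)}{\sqrt{\lambda(t)}}+2\sqrt{\log\tfrac{2}{b(t)}}\bigr)^2\Bigr\}.
\end{equation*}
This is the desired bound for the approximation. To pass to the limit for $u$, I use Proposition \ref{p.5.1}(i): $u_{\varepsilon_n,\delta_n}\to u$ in $L^p$, hence in probability, so along a subsequence almost surely. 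For any $r'>r$, the event $\{u(t,x)/(p_t*u_0(x))<r\}$ is contained in $\liminf\{u_{\varepsilon_n,\delta_n}(t,x)/(p_t*u_0(x))<r'\}$, and Fatou's lemma together with the continuity of the right-hand side in $r$ transfers \eqref{est:Prob.uleftr} from $u_{\varepsilon,\delta}$ to $u$.

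The equivalent form \eqref{est:Prob.ulefta} is merely a change of variables: setting $r=\tfrac12\exp\{-2\sqrt{\lambda(t)\log(2/b(t))}\}\exp\{-\sqrt{\lambda(t)}\,a\}$ in \eqref{est:Prob.uleftr} reduces the exponent to $-a^2/4$. Finally, for \eqref{e.upper-left-bound-general} I plug in the bounds $\lambda(t)\le \tilde{C}e^{\tilde{c}t^{\beta}}$ and $b(t)\ge Ce^{-ct^{\beta}}$ (both consequences of Theorem \ref{t.hhnt} and Lemma \ref{l.A-lambda}). This gives $\sqrt{\lambda(t)}\le \sqrt{\tilde{C}}\exp(\tfrac{\tilde{c}}{2}t^{\beta})$ and $\sqrt{\log(2/b(t))}\le c'\sqrt{1+t^{\beta}}$, from which the advertised estimate in terms of $c_1,\dots,c_4$ follows by routine algebra.

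The one place demanding a little care — and the only substantive obstacle — is the limit-passage step, because small-ball events are open from below but the convergence $u_{\varepsilon,\delta}\to u$ is only in $L^p$. I will handle this by approximating the strict inequality $\{u<r\,p_t*u_0\}$ from above by $\{u<r'\,p_t*u_0\}$ with $r'\downarrow r$, applying the bound at $r'$ for a subsequence where convergence is almost sure, and then using continuity of the right-hand side of \eqref{est:Prob.uleftr} in $r$ to close the argument. Everything else is a direct assembly of the lemmas already established.
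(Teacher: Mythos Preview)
Your proposal is correct and follows essentially the same route as the paper: apply Lemma \ref{lem:uabove} with $\lambda=\lambda(t)$ to bound the small-ball event for $u_{\varepsilon,\delta}$ by a distance event, invoke Lemma \ref{lem:dconcentration}, and then pass to the limit via Proposition \ref{p.5.1}(i). In fact your treatment of the limit-passage step (using a subsequence with almost-sure convergence and approximating $r$ from above by $r'$) is more careful than the paper's, which simply writes ``by sending $\varepsilon,\delta$ to $0$.''
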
 
\begin{proof}
  Applying Lemma \ref{lem:uabove}, we have
  \begin{align*}
  P\lt(\frac{u_{\varepsilon,\delta}(t,x)}{p_t*u_0(x)}\le r\rt)
  \le P\lt(d(\WW_{\vare, \de}, A_{\la(t)}(t,x))\ge \frac{-\log(2r)}{\sqrt \la(t)}\rt)\,. 
  \end{align*}
  Due to the range of $r$, we can choose $a>0$ such that $a+2\sqrt{\log\frac2{b(t)}}=-\frac{\log(2r)}{\sqrt{\lambda(t)}} $ and apply Lemma \ref{lem:dconcentration} to obtain \eqref{est:Prob.uleftr} with $u$ being replaced by $u_{\varepsilon,\delta}$. By sending $\varepsilon,\delta$ to $0$ and applying Proposition \ref{p.5.1} we obtain \eqref{est:Prob.uleftr}. 
\end{proof}

\begin{remark}  In the case when $\dot{W}$ is a space time white noise with spatial dimension one (that is $d=\alpha_0=\alpha=1$), the above theorem yields
\begin{equation*}
P\lt(\frac{u (t,x)}{p_t*u_0(x)} \le r\rt) 
 \le  C \exp\left\{ - \left(  c_1 \exp\left(-c_3 t\right) \log r +c_2 \sqrt {1+t}   \right)^2\right\}\,,
\end{equation*}
for sufficiently small $r$. This implies the result of \cite{flores}. 
\end{remark} 
\begin{corollary}\label{cor:u-neg-m}
  For every $p>0$ and every $(t,x)\in \RR_+\times\RR^d$, we have
  \begin{equation}
    \EE|u(t,x)|^{-p}\le 2^{p}e^{2p\sqrt{\lambda(t)\log\frac2{b(t)}}}\lt(1+4\sqrt{\pi p^2 \lambda(t)}e^{p^2 \lambda(t)}\rt)|p_t*u_0(x)|^{-p} \,.
  \end{equation}
\end{corollary}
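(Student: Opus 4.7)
My plan is to deduce the negative moment bound from the small-ball estimate \eqref{est:Prob.ulefta} in Theorem \ref{thm:usmallball} via the layer-cake formula. Writing $V(t,x)=u(t,x)/(p_t*u_0(x))$, which is a positive random variable by nonnegativity of the initial data and the Feynman--Kac representation, we have
\begin{equation*}
	\EE|u(t,x)|^{-p}=|p_t*u_0(x)|^{-p}\,\EE V(t,x)^{-p}
	=|p_t*u_0(x)|^{-p}\int_0^\infty p\,r^{-p-1} P(V(t,x)<r)\,dr\,,
\end{equation*}
so it suffices to control $\EE V^{-p}$ by the factor $2^p e^{2p\sqrt{\lambda(t)\log(2/b(t))}}(1+4\sqrt{\pi p^2\lambda(t)}\,e^{p^2\lambda(t)})$.

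Introduce the abbreviation $C_0=\tfrac12 e^{-2\sqrt{\lambda(t)\log(2/b(t))}}$ and $\sigma=\sqrt{\lambda(t)}$. Then \eqref{est:Prob.ulefta} reads $P(V<C_0 e^{-\sigma a})\le 2e^{-a^2/4}$ for all $a>0$, and I would split the $r$-integral at $C_0$:
\begin{equation*}
	\int_0^\infty p\,r^{-p-1} P(V<r)\,dr
	= \int_0^{C_0}p\,r^{-p-1}P(V<r)\,dr + \int_{C_0}^\infty p\,r^{-p-1}P(V<r)\,dr.
\end{equation*}
The tail piece is trivially bounded by $\int_{C_0}^\infty p r^{-p-1}dr = C_0^{-p}$. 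For the small-$r$ piece I would substitute $r=C_0 e^{-\sigma a}$, $dr=-\sigma C_0 e^{-\sigma a}da$, to obtain
\begin{equation*}
	\int_0^{C_0}p\,r^{-p-1}P(V<r)\,dr \le 2p\sigma C_0^{-p}\int_0^\infty e^{p\sigma a}e^{-a^2/4}\,da.
\end{equation*}

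The main (and only) technical step is then a routine completion of the square: $p\sigma a-a^2/4=-\tfrac14(a-2p\sigma)^2+p^2\sigma^2$, which yields
\begin{equation*}
	\int_0^\infty e^{p\sigma a-a^2/4}\,da \le e^{p^2\sigma^2}\int_{-\infty}^\infty e^{-(a-2p\sigma)^2/4}\,da = 2\sqrt{\pi}\,e^{p^2\lambda(t)}.
\end{equation*}
Putting the two pieces together gives $\EE V^{-p}\le C_0^{-p}\bigl(1+4\sqrt{\pi p^2\lambda(t)}\,e^{p^2\lambda(t)}\bigr)$, and recognizing $C_0^{-p}=2^p e^{2p\sqrt{\lambda(t)\log(2/b(t))}}$ delivers the claimed bound after multiplying through by $|p_t*u_0(x)|^{-p}$. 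I do not anticipate any genuine obstacle beyond bookkeeping of the constants; the content of the argument is entirely packaged in the Gaussian-type small-ball estimate \eqref{est:Prob.ulefta}.
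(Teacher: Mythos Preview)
Your proof is correct and essentially identical to the paper's: the paper normalizes by setting $F=u(t,x)/(p_t*u_0(x))\cdot 2e^{2\sqrt{\lambda(t)\log(2/b(t))}}=V/C_0$ and splits the layer-cake integral at $r=1$, which is just your argument after the change of variables $r\mapsto r/C_0$. The Gaussian integral is then bounded in the same way.
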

\begin{proof}
  Set $F=\frac{u(t,x)}{p_t*u_0(x)}2e^{2\sqrt{\lambda(t)\log\frac2{b(t)}}}$. We observe that
  \begin{align*}
    \EE F^{-p}=p\int_0^\infty r^{-p}P(F<r)\frac{dr}r
    &\le p\int_0^1 r^{-p}P(F<r)\frac{dr}r+p\int_1^\infty r^{-p}\frac{dr}r
    \\&= p\sqrt{\lambda(t)} \int_0^\infty e^{p\sqrt{\lambda(t)}a}P\lt(F<e^{-\sqrt{\lambda(t)}a}\rt)da+1\,.
  \end{align*}
  The above integral can be estimated using Theorem \ref{thm:usmallball},
  \begin{align*}
    p\sqrt{\lambda(t)} \int_0^\infty e^{p\sqrt{\lambda(t)}a}P\lt(F<e^{-\sqrt{\lambda(t)}a}\rt)da
    \le 2p\sqrt{\lambda(t)}\int_0^\infty e^{p\sqrt{\lambda(t)}a}e^{-\frac{a^2}4} da
    \le 4\sqrt{\pi p^2\lambda(t)}e^{p^2 \lambda(t)}\,.
  \end{align*}
  These estimates yield the result.
\end{proof}

\section{Non-degeneracy of Malliavin derivatives} 
\label{sec:non_degeneracy_of_malliavin_derivatives}
  Corollary \ref{cor.Dku-bound} shows that the Malliavin derivatives  of all orders of $u$ exist. In this section, we show that $\|Du(t,x) \|_\cH$ has finite negative moments of all orders. 
 Let us explain the general idea. We consider the approximation scheme in Section \ref{sec:left_tail} and derive a Feynman-Kac formula for $\|Du_{\varepsilon,\delta}(t,x)\|_\cH^2$, which has analogous structure to \eqref{eqn:FKtheta}. 
  This makes it suitable to apply the methods of Section \ref{sec:left_tail} to estimate the small ball probability of $\|Du(t,x)\|_\cH^2$. As we have seen in Corollary \ref{cor:u-neg-m}, small ball probability estimate yields finiteness of negative moments.   

  We continue with the setup in Section \ref{sec:left_tail} and the approximation scheme used there. 
Let $B$ denote a standard Brownian   motion in $\RR^d$ starting at $x$ and $B^1,B^2,\dots$ be its independent identical copies.
  For every $n\ge2$, we define
  \begin{equation}\label{def:Qn}
    Q^{(n)}_{\varepsilon,\delta}(t,B^1,\cdots,B^n) =\frac12\sum_{j,k=1;j\neq k}^n\int_0^t\int_0^t Q_{\varepsilon,\delta}(t-s,t-r,B^j_s,B^k_r)dsdr \,.
  \end{equation}
  and $Q^{(n)}(t,B^1,\cdots,B^n)$ with obvious modifications. We observe that $Q^{(n)}$ is at best a distribution, however, it has finite exponential moments. This is because $\EE^{B^1,\dots,B^n}e^{\lambda Q^{(n)}(t,B^1,\dots,B^n)}$ coincides with $\EE u_{(\sqrt\lambda)}(t,x)^n$, which is finite in view of Theorem \ref{t.hhnt}.
  Let us fix $t>0$ and $x\in\RR^d$ and define the following quantities
  \begin{equation}\label{def:lb}
    \lambda(t,x)=32\frac{\tilde I(t,x)\EE[\|Du(t,x)\|_\cH^4]}{\lt(\EE \|Du(t,x)\|_\cH^2\rt)^4 }
    \quad\textrm{and}\quad b(t,x)=\frac18 \frac{\lt(\EE \|Du(t,x)\|_\cH^2\rt)^2}{ \EE \|Du(t,x)\|_\cH^4}
    \,,
  \end{equation}
  where
  \begin{equation}\label{def.tiI}
     \tilde I(t,x)=4\EE^{B^1,\cdots, B^4}  \lt[ \prod_{j=1}^4 u_0(B^j_t)  Q^{(2)}(t, B^1,B^2)Q^{(2)}(t, B^3,B^4)  Q^{(2)}(t,B^1,B^3) e^{Q^{(4)}(t,B^1,\dots,B^4)}  \rt]\,.
  \end{equation} 
  Note that $\lambda(t,x)$ and $b(t,x)$ are positive and finite (by Corollary \ref{cor.Dku-bound}).
  \begin{theorem}\label{thm:Duneg} For every $p>0$ and $(t,x)\in\RR_+\times\RR^d$,
    \begin{equation}\label{est:Du.neg.moment}
      \EE\|Du(t,x)\|_\cH^{-2p}\le2^{p}e^{2p\sqrt{\lambda(t,x)\log\frac2{b(t,x)}}}\lt(1+4\sqrt{\pi p^2 \lambda(t,x)}e^{p^2 \lambda(t,x)}\rt)(\EE \|D u(t,x)\|_\cH^2)^{-p}\,.
    \end{equation}
  \end{theorem}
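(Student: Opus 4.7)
The plan is to mirror the derivation of Corollary \ref{cor:u-neg-m}, replacing the role of $u(t,x)$ (and its Feynman--Kac representation) by that of $\|Du(t,x)\|_\cH^2$. Concretely, once I have a small ball estimate of the form
\begin{equation*}
P\!\left(\frac{\|Du(t,x)\|_\cH^2}{\EE\|Du(t,x)\|_\cH^2}<\tfrac12 e^{-2\sqrt{\la(t,x)\log(2/b(t,x))}}e^{-\sqrt{\la(t,x)}a}\right)\le 2e^{-a^2/4},
\end{equation*}
a verbatim copy of the computation in Corollary \ref{cor:u-neg-m}, applied to $F=\frac{\|Du(t,x)\|_\cH^2}{\EE\|Du(t,x)\|_\cH^2}\cdot 2e^{2\sqrt{\la(t,x)\log(2/b(t,x))}}$ and to the exponent $p$ (not $2p$, since we are raising $\|Du\|_\cH^2$), yields exactly \eqref{est:Du.neg.moment}. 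So the real task is producing that small ball estimate, and for that I will re-run the three building blocks of Section \ref{sec:left_tail} (Lemmas \ref{lem:uabove}, \ref{l.A-lambda}, \ref{lem:dconcentration}) in the $Du$ setting.

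The starting point is a Feynman--Kac type identity for the squared Malliavin norm of the approximated solution. Differentiating \eqref{eqn:FKtheta} with respect to $\WW$ and using that $DV_t(B,W_{\vare,\de})=\int_0^t \eta_{0,\de}(t-s,\cdot)\eta_\vare(B_s,\cdot)\,ds$ gives, after pairing two independent Brownian copies,
\begin{equation*}
\|Du_{\vare,\de}(t,x,W_{\vare,\de})\|_\cH^2=\EE^{B^1,B^2}\!\left[\Theta(B^1,W_{\vare,\de})\Theta(B^2,W_{\vare,\de})\,Q^{(2)}_{\vare,\de}(t,B^1,B^2)\right].
\end{equation*}
Since $Q^{(2)}_{\vare,\de}\ge 0$, the weighted expectation $\tilde\EE^{B^1,B^2}_{W'}[F]:=\EE^{B^1,B^2}[F\Theta(B^1,W')\Theta(B^2,W')Q^{(2)}_{\vare,\de}(t,B^1,B^2)]/\|Du_{\vare,\de}(t,x,W')\|_\cH^2$ is a genuine probability-type functional. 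Following the proof of Lemma \ref{lem:uabove}, writing $\Theta(B,W_{\vare,\de})=\Theta(B,W'_{\vare,\de})\exp\{\int_0^t(\dot W-\dot W')(t-s,B_s)\,ds\}$ and using Jensen plus Cauchy--Schwarz yields
\begin{equation*}
\|Du_{\vare,\de}(t,x,W_{\vare,\de})\|_\cH^2\ge \|Du_{\vare,\de}(t,x,W'_{\vare,\de})\|_\cH^2 \exp\!\left\{-\sqrt{I^{(2,2)}(W')}\,\|\dot\WW_{\vare,\de}-\dot\WW'_{\vare,\de}\|_{L^2}\right\},
\end{equation*}
where $I^{(2,2)}(W')$ is the $L^2(\RR\times\RR^d)$-norm-squared of the test function $h(r,y)=\tilde\EE^{B^1,B^2}_{W'}[\int_0^t\eta_0(t-s-r)\eta(B^1_s-y)\,ds+\int_0^t\eta_0(t-s-r)\eta(B^2_s-y)\,ds]$. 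Tensoring with an independent pair $(B^3,B^4)$ and using symmetry, one finds $I^{(2,2)}(W')=4\tilde\EE^{B^1,B^2}_{W'}\otimes\tilde\EE^{B^3,B^4}_{W'}\bigl[Q^{(2)}(t,B^1,B^3)\bigr]$.

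This computation motivates the event
\begin{equation*}
A'_\la(t,x)=\left\{W'_{\vare,\de}:\ \|Du_{\vare,\de}(t,x,W')\|_\cH^2\ge\tfrac12\EE\|Du(t,x)\|_\cH^2,\ \ I^{(2,2)}(W')\le\la\right\},
\end{equation*}
and the main obstacle—more bookkeeping than hard analysis—is the analog of Lemma \ref{l.A-lambda}: estimating $P(A'_\la(t,x))$ from below. Here I apply Paley--Zygmund to $\|Du(t,x)\|_\cH^2$ (instead of to $u(t,x)$), giving $P(\|Du\|_\cH^2\ge\tfrac12\EE\|Du\|_\cH^2)\ge \tfrac14(\EE\|Du\|_\cH^2)^2/\EE\|Du\|_\cH^4$, and I control the complementary event by Chebyshev, using that $\EE[\Theta(B^1)\cdots\Theta(B^4)]=\prod_j u_0(B^j_t)\exp\{Q^{(4)}_{\vare,\de}(t,B^1,\dots,B^4)\}$ and hence
\begin{equation*}
\EE\bigl[\EE^{B^1,\dots,B^4}[\Theta(B^1)\cdots\Theta(B^4)Q^{(2)}(t,B^1,B^2)Q^{(2)}(t,B^3,B^4)Q^{(2)}(t,B^1,B^3)]\bigr]=\tfrac14\tilde I(t,x).
\end{equation*}
Balancing the two contributions exactly as in Lemma \ref{l.A-lambda} forces the choice $\la=\la(t,x)$ and produces the lower bound $P(A'_{\la(t,x)}(t,x))\ge b(t,x)$ with $\la(t,x),b(t,x)$ as defined in \eqref{def:lb}--\eqref{def.tiI}.

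With the lower bound on $P(A'_{\la(t,x)}(t,x))$ in hand, Lemma \ref{lem:dconcentration} and its proof go through unchanged (the concentration inequality only uses the Gaussian structure of $\WW_{\vare,\de}$ and the existence of a set of non-trivial probability in $\dot L^2$). Combining this with the multiplicative bound from the $Du$-analog of Lemma \ref{lem:uabove} gives the desired small ball estimate for $\|Du_{\vare,\de}(t,x)\|_\cH^2$, which passes to $\|Du(t,x)\|_\cH^2$ by sending $\vare,\de\to 0$ using Corollary \ref{cor.Dku-bound} and Proposition \ref{p.5.1}(i). Integrating $r\mapsto r^{-p}P(F<r)$ as in the proof of Corollary \ref{cor:u-neg-m} then yields \eqref{est:Du.neg.moment}. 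The only genuinely new piece of work is the four-Brownian-motion calculation leading to $\tilde I(t,x)$; the finiteness of $\tilde I(t,x)$ itself is a consequence of Theorem \ref{t.hhnt} (all $Q^{(k)}$'s have finite exponential moments against $\prod u_0(B^j_t)$).
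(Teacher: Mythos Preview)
Your proposal is correct and follows essentially the same route as the paper. The paper packages the argument into Lemmas \ref{lem:Duphi}, \ref{lem:tiA-lambda}, \ref{lem:dtia} and Theorem \ref{thm:Zsmall}, which are precisely your $Du$-analogs of Lemmas \ref{lem:uabove}, \ref{l.A-lambda}, \ref{lem:dconcentration} and Theorem \ref{thm:usmallball}; the weighted expectations you call $\tilde\EE^{B^1,B^2}_{W'}$ and $\tilde\EE^{B^1,B^2}_{W'}\otimes\tilde\EE^{B^3,B^4}_{W'}$ are the paper's $\EE^{(2)}_{W'_{\vare,\de}}$ and $\EE^{(4)}_{W'_{\vare,\de}}$, and your event $A'_\la$ coincides with the paper's $\tilde A_\la$. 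Two small bookkeeping points: in your Chebyshev numerator the weights coming from $\tilde\EE^{B^1,B^2}_{W'}$ carry $Q^{(2)}_{\vare,\de}$ rather than $Q^{(2)}$ (harmless, since $Q^{(2)}_{\vare,\de}\le Q^{(2)}$), and the paper writes the threshold in $\tilde A_\la$ as $\tfrac12\EE Z_{\vare,\de}(t,x)$ rather than $\tfrac12\EE\|Du(t,x)\|_\cH^2$, passing to the limit only at the very end---but neither affects the argument.
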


We recall that   $
  \dot{W} (t,x)=\int_\RR\int_{\RR^d} \eta_0(t-r)\eta(x-z)\WW(dr,dz)$. Thus, a functional of $W$ is also a functional
  of $\WW$. It is then more convenient to work with Malliavin derivative with respect to the white noise $\WW$, denoted by $D^\WW$. The following result relates the two Malliavin derivatives (chain rule).  
  \begin{proposition}\label{prop:DDW}
    Suppose $F$ is in $D^{1,2}(W)$ (wrt $W$). Then $F$ is a well-defined functional of $\WW$ and $F$ is in $D^{1,2}(\WW)$. Moreover, we have 
    \begin{equation}
      D^\WW F=(\eta_0\otimes \eta)\star DF\,,
      \label{e.two_der}
    \end{equation}
    where $\star$ is the convolution in $\RR^{d+1}$.
    In particular, $\|DF\|_\cH=\|D^\WW F\|_{L^2(\RR^{d+1}) } $.
  \end{proposition}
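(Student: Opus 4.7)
The plan is to establish the identity first on smooth cylindrical functionals, where the chain rule is a direct computation, and then to extend to $D^{1,2}(W)$ by density using the closability of the Malliavin operator $D^\WW$. The key algebraic fact that makes everything work is that, under Hypothesis \ref{hroot}, the map $T:\phi\mapsto (\eta_0\otimes \eta)\star \phi$ is an isometric embedding of (a dense subspace of) $\cH$ into $L^2(\RR^{d+1})$.

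First I would verify the isometry. For $\phi,\psi\in C_c^\infty(\RR_+\times \RR^d)$, Hypothesis \ref{hroot} together with Fubini gives
\begin{align*}
\langle \phi,\psi\rangle_\cH
&=\iint\iint \phi(s,x)\psi(t,y)(\eta_0* \eta_0)(t-s)(\eta*\eta)(x-y)\,ds\,dx\,dt\,dy\\
&=\iint\bigl[(\eta_0\otimes \eta)\star \phi\bigr](r,z)\,\bigl[(\eta_0\otimes \eta)\star \psi\bigr](r,z)\,dr\,dz
=\langle T\phi,T\psi\rangle_{L^2(\RR^{d+1})},
\end{align*}
so that $T$ extends to an isometry from $\cH$ into $L^2(\RR^{d+1})$. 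Second, I would check that at the level of first chaos this isometry intertwines the two noises: plugging \eqref{e.4.2-noise-representation} into $W(\phi)=\int_0^\infty\!\!\int_{\RR^d}\phi(t,x)W(dt,dx)$ and applying stochastic Fubini yields $W(\phi)=\WW(T\phi)$ for every $\phi\in \cH$. In particular, any functional of $W$ is automatically a well-defined functional of $\WW$.

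Third, take a smooth cylindrical functional $F=f(W(\phi_1),\dots,W(\phi_n))$ with $f\in C^\infty_b(\RR^n)$ and $\phi_i\in C_c^\infty$. By the chain rule for the Malliavin derivative with respect to $W$ (resp.\ with respect to $\WW$), using $W(\phi_i)=\WW(T\phi_i)$,
\begin{equation*}
DF=\sum_{i=1}^n \partial_i f(W(\phi_1),\dots,W(\phi_n))\,\phi_i,\qquad
D^\WW F=\sum_{i=1}^n \partial_i f(W(\phi_1),\dots,W(\phi_n))\,T\phi_i.
\end{equation*}
Since $T$ is linear, the second sum is exactly $T(DF)=(\eta_0\otimes \eta)\star DF$, so \eqref{e.two_der} holds on cylindrical functionals, and the isometry gives $\|DF\|_\cH=\|D^\WW F\|_{L^2(\RR^{d+1})}$.

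Finally, for general $F\in D^{1,2}(W)$, choose cylindrical $F_n\to F$ with $DF_n\to DF$ in $L^2(\Omega;\cH)$. Boundedness of $T$ yields $T(DF_n)\to T(DF)$ in $L^2(\Omega;L^2(\RR^{d+1}))$, while $F_n\to F$ in $L^2(\Omega)$. Closability of $D^\WW$ then gives $F\in D^{1,2}(\WW)$ together with $D^\WW F=T(DF)$, and the isometry identity for $F_n$ passes to the limit. The only point that requires some care is the density step: one must ensure that Malliavin differentiability with respect to $W$ is defined via cylindrical functionals whose arguments $W(\phi_i)$ are literally of the form $\WW(T\phi_i)$; this is automatic once one checks that $\{T\phi:\phi\in C_c^\infty(\RR_+\times \RR^d)\}$ is total in the range of $T$, which it is by construction of $\cH$. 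I expect this bookkeeping around the two underlying Gaussian families---rather than any hard analytic estimate---to be the only delicate step.
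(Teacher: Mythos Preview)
Your proposal is correct and follows essentially the same route as the paper: verify the identity on first-chaos elements $W(\phi)=\WW(T\phi)$, extend to cylindrical functionals via the chain rule, then pass to the limit by density. The paper is terser (it uses polynomial cylindrical functions and simply says ``the complete result follows by a limiting argument''), whereas you spell out the isometry of $T$ and invoke closability of $D^\WW$ explicitly; these are exactly the ingredients the limiting argument needs, so your version is if anything a cleaner write-up of the same proof.
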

  \begin{proof}
    If $F=\int \phi dW$,  where $\phi\in \cH$ is  deterministic,  then $F=\int (\eta_0\otimes \eta)\star \phi d\WW $. So $DF= \phi$ and $D^\WW F=(\eta_0\otimes \eta)\star\phi=(\eta_0\otimes \eta)\star D F $, which verifies the relation \eqref{e.two_der}. Now let $F=\psi(F_1, \cdots, F_n)$,  where
    $\psi$ is a polynomial of $n$ variables and $F_i=
    \int \phi_i dW$,  where  $\phi_i\in \cH$ are   deterministic.
    Let $\partial _i\psi$ denote the partial derivative of $\psi$ with respect to the $i$-th coordinate. Using the chain rule and the previous argument, we see that
    \begin{align*}
      D^\WW F
      &=\sum_{i=1}^n \partial_i \psi(F_1,\cdots,F_n)D^\WW F_i
      =\sum_{i=1}^n \partial_i \psi(F_1,\cdots,F_n)(\eta_0\otimes \eta)\star D F_i
      \\&=(\eta_0\otimes \eta)\star\sum_{i=1}^n \partial_i \psi(F_1,\cdots,F_n) D F_i
      =(\eta_0\otimes \eta)\star DF\,.
    \end{align*}
    This verifies the relation \eqref{e.two_der}. Since the set of the functionals of
    the above form  is dense in $D^{1,2}(W)$, the complete result follows by a limiting argument. 
  \end{proof}
  Let us now derive a Feynman-Kac formula for $D^\WW u_{\varepsilon,\delta}(t,x)$. In view of \eqref{e.appr-w}, we can write
  \begin{align*}
    V_t(B,W_{\varepsilon,\delta})
    =\int_\RR\int_{\RR^d}\int_0^t \eta_{0,\delta}(t-s,r) \eta_{\varepsilon}(B_s,y)ds\WW(dr,dy)-\frac12\int_0^t\int_0^t Q_{\varepsilon,\delta}(t-s,t-r,B_s,B_r)dsdr\,,
  \end{align*}
  which implies
  \begin{align*}
    D^\WW_{r,y} V_t(B,W_{\varepsilon,\delta})=\int_0^t \eta_{0,\delta}(t-s,r) \eta_{\varepsilon}(B_s,y)ds\,.
  \end{align*}
  Hence, applying $D^\WW$ to \eqref{eqn:FKtheta}, we obtain
  \begin{equation}
    D^\WW_{r,y} u_{\varepsilon,\delta}(t,x)=\EE^B \lt[u_0(B_t)\lt(\int_0^t \eta_{0,\delta}(t-s,r) \eta_{\varepsilon}(B_s,y)ds\rt)e^{V_t(B,W_{\varepsilon,\delta})}\rt]\,.
  \end{equation}
  It follows from the above identity that
  \begin{equation}\label{eqn:FKDuep}
    \|D^\WW_\cdot u_{\varepsilon,\delta}(t,x)\|^2_{L^2(\RR^{d+1})}=\EE^{B^1,B^2}\lt[u_0(B^1_t)u_0(B^2_t) Q^{(2)}_{\varepsilon,\delta}(t,B^1,B^2)e^{V_t(B^1,W_{\varepsilon,\delta})+V_t(B^2,W_{\varepsilon,\delta})} \rt]\,.
  \end{equation}
  To simplify notation, we denote $Z_{\varepsilon,\delta}(t,x)=\|D^\WW u_{\varepsilon,\delta}(t,x)\|^2_{L^2(\RR^{d+1})} $ and $Z(t,x)=\|D^\WW u(t,x)\|^2_{L^2(\RR^{d+1})} $. From the above identities, we have
  \begin{equation}\label{eqn:Z1st}
    \EE Z_{\varepsilon,\delta}(t,x)=\EE^{B^1,B^2}\lt[u_0(B^1_t)u_0(B^2_t) Q^{(2)}_{\varepsilon,\delta}(t,B^1,B^2)e^{Q^{(2)}_{\varepsilon,\delta}(t,B^1,B^2)} \rt]
  \end{equation}
  and
  \begin{equation}\label{eqn:Z2nd}
    \EE Z^2_{\varepsilon,\delta}(t,x)
    =\EE^{B^1,\cdots,B^4}\lt[\prod_{j=1}^4 u_0(B^j_t) Q^{(2)}_{\varepsilon,\delta}(t,B^1,B^2)Q^{(2)}_{\varepsilon,\delta}(t,B^3,B^4)e^{Q^{(4)}_{\varepsilon,\delta}(t,B^1,\cdots,B^4)} \rt]\,.
  \end{equation}

  We now estimate the small ball probability of $Z_{\varepsilon,\delta}(t,x)$ using the methods in Section \ref{sec:left_tail}. We put 
  \[
    \Theta(B^1,B^2,W_{\varepsilon,\delta})=u_0(B^1_t)u_0(B^2_t) Q^{(2)}_{\varepsilon,\delta}(t,B^1,B^2) e^{V_t(B^1,W_{\varepsilon,\delta})+V_t(B^2,W_{\varepsilon,\delta})}\,.
  \]
  Let $W_{\varepsilon,\delta}$ be fixed. For every measurable functional $F$ of $B^1,B^2$ and every measurable functional $G$ of $B^1,\cdots,B^4$, we define their weighted expectations as follows 
  \begin{equation}
    \EE^{(2)}_{W_{\varepsilon,\delta}}[F]=\frac{\EE^B[F \Theta(B^1,B^2,W_{\varepsilon,\delta})]}{\EE^B[\Theta(B^1,B^2,W_{\varepsilon,\delta})]}
    =\frac{\EE^B[F \Theta(B^1,B^2,W_{\varepsilon,\delta})]}{Z_{\varepsilon,\delta}(t,x) }
  \end{equation}
  and
  \begin{equation}
     E^{(4)}_{W_{\varepsilon,\delta}}[G]=\frac{\EE^B[G \Theta(B^1,B^2,W_{\varepsilon,\delta}) \Theta(B^3,B^4,W_{\varepsilon,\delta})]}{\EE^B[\Theta(B^1,B^2,W_{\varepsilon,\delta}) \Theta(B^3,B^4,W_{\varepsilon,\delta})]}
    =\frac{\EE^B[G \Theta(B^1,B^2,W_{\varepsilon,\delta}) \Theta(B^3,B^4,W_{\varepsilon,\delta})]}{|Z_{\varepsilon,\delta}(t,x)|^2}\,.
  \end{equation}
  It is evident that these qualities also depend on $t,x$, however, this  dependence  will be omitted.  

  For each $\lambda>0$, $(t,x)\in\RR_+\times\RR^d$, we define
  \begin{equation}\label{def:tiAlambda}
    \tilde A_\la(t,x)
    =\Bigg\{ \WW_{\vare, \de}'\in \dot{L}^2_d:\  
    Z_{\varepsilon,\delta}(t,x, W_{\vare, \de}')\ge \frac12 \EE Z_{\varepsilon,\delta}(t,x)  
    \quad\textrm{and}\quad  
    \EE^{(4)} _{W_{\vare, \de}'}\sum_{j=1}^2\sum_{k=3}^4 Q^{(2)}   (t, B^j, B^k)
    \le \la\Bigg\} \,,
  \end{equation}
  \begin{equation}
    \lambda_{\varepsilon,\delta}(t,x)=32\frac{\tilde I(t,x)\EE[|Z_{\varepsilon,\delta}(t,x)|^2]}{|\EE Z_{\varepsilon,\delta}(t,x)|^4 }
    \quad\textrm{and}\quad b_{\varepsilon,\delta}(t,x)=\frac18 \frac{|\EE Z_{\varepsilon,\delta}(t,x)|^2}{ \EE |Z_{\varepsilon,\delta}(t,x)|^2}
    \,.
  \end{equation}
  We note that $\lambda_{\varepsilon,\delta}(t,x) $ and $b_{\varepsilon,\delta}(t,x) $ are positive finite constants for every $t,x$.
  \begin{lemma}\label{lem:Duphi}
    \begin{equation}
      Z_{\varepsilon,\delta}(t,x)\ge \frac12 \EE[Z_{\varepsilon,\delta}(t,x)]e^{-\sqrt \lambda d(\WW_{\varepsilon,\delta},\tilde A_\lambda)}
    \end{equation}
  \end{lemma}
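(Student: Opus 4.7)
The plan is to imitate the proof of Lemma \ref{lem:uabove}, replacing $u_{\varepsilon,\delta}$ by $Z_{\varepsilon,\delta}$ and using the Feynman--Kac representation \eqref{eqn:FKDuep}. First, I factor out an arbitrary reference noise $W'_{\varepsilon,\delta}$: since the quadratic correction in $V_t(B,W_{\varepsilon,\delta})$ does not depend on the noise, one has $V_t(B,W_{\varepsilon,\delta})-V_t(B,W'_{\varepsilon,\delta})=\int_0^t[\dot W_{\varepsilon,\delta}-\dot W'_{\varepsilon,\delta}](t-s,B_s)ds$, and \eqref{eqn:FKDuep} yields
\begin{equation*}
Z_{\varepsilon,\delta}(t,x,W_{\varepsilon,\delta})=Z_{\varepsilon,\delta}(t,x,W'_{\varepsilon,\delta})\, \EE^{(2)}_{W'_{\varepsilon,\delta}}\exp\Bigl\{\sum_{j=1}^2\int_0^t[\dot W_{\varepsilon,\delta}-\dot W'_{\varepsilon,\delta}](t-s,B^j_s)ds\Bigr\}.
\end{equation*}
Since $\EE^{(2)}_{W'_{\varepsilon,\delta}}$ is a genuine probability measure on the pair $(B^1,B^2)$, Jensen's inequality moves the exponential outside, and it remains to bound from above the absolute value of $J:=\EE^{(2)}_{W'_{\varepsilon,\delta}}\sum_{j=1}^2\int_0^t[\dot W_{\varepsilon,\delta}-\dot W'_{\varepsilon,\delta}](t-s,B^j_s)ds$.

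Next, I rewrite $J$ in the white-noise variables via \eqref{e.appr-w}: $J=\int_{\RR\times\RR^d}[\dot\WW_{\varepsilon,\delta}(r,y)-\dot\WW'_{\varepsilon,\delta}(r,y)]\,\Phi(r,y)\,drdy$ with
\begin{equation*}
\Phi(r,y)=\EE^{(2)}_{W'_{\varepsilon,\delta}}\sum_{j=1}^2\int_0^t\eta_0(t-s-r)\eta(B^j_s-y)ds.
\end{equation*}
Cauchy--Schwarz in $L^2(\RR^{d+1})$ then gives $|J|\le\|\dot\WW_{\varepsilon,\delta}-\dot\WW'_{\varepsilon,\delta}\|_{L^2}\,\|\Phi\|_{L^2}$. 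To evaluate $\|\Phi\|_{L^2}^2$, I use the tensorization identity $(\EE^{(2)}_{W'}F)^2=\EE^{(4)}_{W'}\bigl[F(B^1,B^2)F(B^3,B^4)\bigr]$, which holds because the density defining $\EE^{(4)}_{W'_{\varepsilon,\delta}}$ is exactly the product $\Theta(B^1,B^2,W'_{\varepsilon,\delta})\Theta(B^3,B^4,W'_{\varepsilon,\delta})/|Z_{\varepsilon,\delta}|^2$. Carrying out the $(r,y)$-integral and using Hypothesis \ref{hroot} to recognize $\int\int\eta_0(t-s-r)\eta_0(t-s'-r)\eta(B^j_s-y)\eta(B^k_{s'}-y)\,drdy$ as $Q(t-s,t-s',B^j_s,B^k_{s'})$ (just as in the derivation of \eqref{e.uniform-Q-bound}), yields
\begin{equation*}
\|\Phi\|_{L^2}^2=\EE^{(4)}_{W'_{\varepsilon,\delta}}\sum_{j=1}^2\sum_{k=3}^4 Q^{(2)}(t,B^j,B^k).
\end{equation*}

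Finally, when $W'_{\varepsilon,\delta}\in\tilde A_\lambda(t,x)$, the definition \eqref{def:tiAlambda} furnishes both $\|\Phi\|_{L^2}^2\le\lambda$ and $Z_{\varepsilon,\delta}(t,x,W'_{\varepsilon,\delta})\ge\tfrac12\EE Z_{\varepsilon,\delta}(t,x)$. Combined with the Jensen step above this gives
\begin{equation*}
Z_{\varepsilon,\delta}(t,x,W_{\varepsilon,\delta})\ge \tfrac12\EE Z_{\varepsilon,\delta}(t,x)\exp\bigl\{-\sqrt\lambda\,\|\dot\WW_{\varepsilon,\delta}-\dot\WW'_{\varepsilon,\delta}\|_{L^2}\bigr\},
\end{equation*}
and taking the infimum over $W'_{\varepsilon,\delta}\in\tilde A_\lambda(t,x)$ replaces the $L^2$-norm by $d(\WW_{\varepsilon,\delta},\tilde A_\lambda)$, as required. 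The principal technical point that is absent from Lemma \ref{lem:uabove} is the tensorization step identifying $(\EE^{(2)}_{W'})^2$ with $\EE^{(4)}_{W'}$; once this is in hand, the remaining bookkeeping follows the template of Lemma \ref{lem:uabove} verbatim.
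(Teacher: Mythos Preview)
Your proof is correct and follows essentially the same route as the paper's own argument: rewrite $Z_{\varepsilon,\delta}$ via the Feynman--Kac formula \eqref{eqn:FKDuep}, apply Jensen with respect to the tilted measure $\EE^{(2)}_{W'_{\varepsilon,\delta}}$, pass to the white-noise representation via \eqref{e.appr-w}, use Cauchy--Schwarz in $L^2(\RR^{d+1})$, identify the squared $\EE^{(2)}$-expectation with the $\EE^{(4)}$-expectation of the cross terms $\sum_{j=1}^2\sum_{k=3}^4 Q^{(2)}(t,B^j,B^k)$, and invoke the two defining inequalities of $\tilde A_\lambda$. The only cosmetic differences are that you make the tensorization identity $(\EE^{(2)}_{W'}F)^2=\EE^{(4)}_{W'}[F(B^1,B^2)F(B^3,B^4)]$ explicit (the paper performs the same step but does not name it) and that you spell out the final infimum over $\WW'_{\varepsilon,\delta}\in\tilde A_\lambda$, which the paper leaves implicit.
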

  \begin{proof} Let $\WW'_{\varepsilon,\delta}$ be in $\tilde A_\lambda$. Using Jensen's inequality and the first inequality in \eqref{def:tiAlambda}, we see that
    \begin{align*}
      Z_{\varepsilon,\delta}(t,x,W)
      &=Z_{\varepsilon,\delta}(t,x,W') \EE^{(2)}_{W'_{\varepsilon,\delta}}\lt[ e^{\sum_{j=1}^2\int_0^t(\dot{W}_{\varepsilon,\delta}-\dot{W}'_{\varepsilon,\delta})(t-s,B^j_s)ds } \rt]
      \\&\ge Z_{\varepsilon,\delta}(t,x,W')\exp\lt\{ \EE^{(2)}_{W'_{\varepsilon,\delta}} \sum_{j=1}^2\int_0^t(\dot{W}_{\varepsilon,\delta}-\dot{W}'_{\varepsilon,\delta})(t-s,B^j_s)ds \rt\}
      \\&\ge \frac12\EE [Z_{\varepsilon,\delta}(t,x)]\exp\lt\{ \EE^{(2)}_{W'_{\varepsilon,\delta}} \sum_{j=1}^2\int_0^t(\dot{W}_{\varepsilon,\delta}-\dot{W}'_{\varepsilon,\delta})(t-s,B^j_s)ds \rt\}\,.
    \end{align*}
    The weighted expectation appeared in the above exponential can be estimated using Cauchy-Schwarz inequality,
    \begin{align*}
      &\EE^{(2)} _{W_{\vare, \de}'} \sum_{j=1}^2\int_0^t \left[\dot W_{\vare, \de}-\dot  W_{\vare, \de}'\right](t-s, B^j_s) ds
      \\&=\int_\RR \int_{\RR^d}  \EE_{W_{\vare, \de}'}
      ^{(2)}\sum_{j=1}^2\int_0^t \eta_0(t-s-r)\eta(B_s^j-y)  ds \left[ \dot  \WW_{\vare, \de}(r, y)
      -\dot \WW_{\vare, \de}'(  r,y)\right] drdy \\
      &\ge-\left\{\int_\RR\int_{\RR^d} 
      \left(\EE_{W_{\vare, \de}'}
      ^{(2)}  \sum_{j=1}^2\int_0^t \eta_0(t-s-r)\eta(B^j_s-y)ds\right)^2 drdy \right\}^{\frac12} \left\{\int_\RR\int_{\RR^d}  
       \left[ \dot  \WW_{\vare, \de}(r, y)
      -\dot \WW_{\vare, \de}'(  r,y)\right] ^2drdy\right\}^{\frac12} \\
      &=-\left\{\EE_{W_{\vare, \de}'}
      ^{(4)}  \sum_{j=1}^2\sum_{k=3}^4 Q^{(2)}(t,B^j,B^k) \right\}^{\frac12} \|\dot{\WW}_{\varepsilon,\delta}-\dot{\WW}'_{\varepsilon,\delta}\|_{L^2(\RR^{d+1})}  \,. 
    \end{align*}
    Now since $\WW_{\vare,\de}'\in \tilde A_\la$, using the second inequality in \eqref{def:tiAlambda}, we obtain from the above that
    \begin{equation*}
      \EE^{(2)} _{W_{\vare, \de}'} \sum_{j=1}^2\int_0^t \left[\dot W_{\vare, \de}-\dot  W_{\vare, \de}'\right](t-s, B^j_s) ds\ge- \la^{1/2}\|\dot{\WW}_{\varepsilon,\delta}-\dot{\WW}'_{\varepsilon,\delta}\|_{L^2(\RR^{d+1})}  \,. 
    \end{equation*}
    Combining these estimates yields
    \begin{equation*}
      Z_{\varepsilon,\delta}(t,x,W)\ge\frac12\EE[Z_{\varepsilon,\delta}(t,x)]e^{-\sqrt \lambda d(\WW_{\varepsilon,\delta},\WW_{\varepsilon,\delta}') }
    \end{equation*}
    for every $\WW'_{\varepsilon,\delta}\in\tilde A_\lambda$, which is the statement of the lemma. 
  \end{proof}
  \begin{lemma}\label{lem:tiA-lambda} Let $t>0$ and $x\in\RR^d$ be fixed but arbitrary. For every $\lambda$ satisfying
    \begin{equation}\label{eqn:ltia}
      \lambda\ge \lambda_{\varepsilon,\delta}(t,x) \,,
    \end{equation}
    we have
    \begin{equation}\label{est:tiA}
    P(\tilde A_\la(t,x)) \ge b_{\varepsilon,\delta}(t,x) \,.
    \end{equation}
  \end{lemma}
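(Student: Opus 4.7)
The plan is to mimic the proof of Lemma \ref{l.A-lambda}: decompose $\tilde A_\lambda$ as an intersection of a Paley--Zygmund-friendly event and a Chebyshev-friendly event, then combine those two inequalities. First I would use the lower bound $Z_{\varepsilon,\delta}(t,x,W') \ge \tfrac12 \EE Z_{\varepsilon,\delta}(t,x)$ to eliminate the random denominator in the second condition: since $\EE^{(4)}_{W'}[\cdot] = \EE^B[\cdot\, \Theta(B^1,B^2,W')\Theta(B^3,B^4,W')]/|Z_{\varepsilon,\delta}(t,x,W')|^2$, on the event $B_1 := \{Z_{\varepsilon,\delta}(t,x,W') \ge \tfrac12 \EE Z_{\varepsilon,\delta}(t,x)\}$ we have $1/Z_{\varepsilon,\delta}^2 \le 4/(\EE Z_{\varepsilon,\delta})^2$. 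Hence $B_1 \cap B_2 \subseteq \tilde A_\lambda$, where
\[
B_2 = \Bigl\{\EE^B\Bigl[\sum_{j\in\{1,2\},\,k\in\{3,4\}} Q^{(2)}(t,B^j,B^k)\,\Theta(B^1,B^2,W')\Theta(B^3,B^4,W')\Bigr] \le \tfrac{\lambda}{4}(\EE Z_{\varepsilon,\delta}(t,x))^2\Bigr\},
\]
and therefore $P(\tilde A_\lambda) \ge P(B_1) - P(B_2^c)$.

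For $P(B_1)$ I would apply Paley--Zygmund directly to the nonnegative random variable $Z_{\varepsilon,\delta}(t,x)$, yielding $P(B_1) \ge \tfrac14 (\EE Z_{\varepsilon,\delta})^2/\EE Z_{\varepsilon,\delta}^2 = 2\,b_{\varepsilon,\delta}(t,x)$. For $P(B_2^c)$ I would apply Chebyshev, which reduces the task to bounding
\[
J := \EE \EE^B\Bigl[\sum_{j,k} Q^{(2)}(t,B^j,B^k)\,\Theta(B^1,B^2,W)\Theta(B^3,B^4,W)\Bigr].
\]
Switching the order of expectations by Fubini, I would compute $\EE^W[e^{\sum_{j=1}^4 V_t(B^j,W)}]$: the stochastic-integral part of $V_t$ is jointly Gaussian, and its variance together with the Wick compensators in $V_t$ gives exactly $e^{Q^{(4)}_{\varepsilon,\delta}(t,B^1,\dots,B^4)}$ (the diagonal terms cancel and the off-diagonal terms combine into $Q^{(4)}_{\varepsilon,\delta}$). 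Using the pointwise estimate $Q_{\varepsilon,\delta} \le Q$ from \eqref{e.uniform-Q-bound}, both $Q^{(2)}_{\varepsilon,\delta}(t,B^1,B^2)Q^{(2)}_{\varepsilon,\delta}(t,B^3,B^4)$ and $e^{Q^{(4)}_{\varepsilon,\delta}}$ are majorized by their unregularized analogues, so
\[
J \le \EE^B\Bigl[\prod_{j=1}^4 u_0(B^j_t)\,Q^{(2)}(t,B^1,B^2)Q^{(2)}(t,B^3,B^4)\sum_{j\in\{1,2\},\,k\in\{3,4\}} Q^{(2)}(t,B^j,B^k)\,e^{Q^{(4)}(t,B^1,\dots,B^4)}\Bigr].
\]
By the exchangeability of the i.i.d.\ copies $B^1,\dots,B^4$, each of the four cross terms $(j,k)\in\{1,2\}\times\{3,4\}$ contributes the same amount to the expectation, so $J \le \tilde I(t,x)$, with the factor $4$ absorbed into the definition of $\tilde I$.

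Combining these two bounds gives
\[
P(\tilde A_\lambda) \ge 2\,b_{\varepsilon,\delta}(t,x) - \frac{4\,\tilde I(t,x)}{\lambda\,(\EE Z_{\varepsilon,\delta})^2}.
\]
The hypothesis $\lambda \ge \lambda_{\varepsilon,\delta}(t,x) = 32\,\tilde I(t,x)\,\EE Z_{\varepsilon,\delta}^2/(\EE Z_{\varepsilon,\delta})^4$ is precisely what is needed to make the subtracted term $\le b_{\varepsilon,\delta}(t,x)$, and the conclusion \eqref{est:tiA} follows.

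The main technical obstacle I anticipate is the correct identification of the exponential moment $\EE^W[e^{\sum_j V_t(B^j,W)}] = e^{Q^{(4)}_{\varepsilon,\delta}}$ together with the careful bookkeeping of the combinatorial factor $4$: one has to check that the diagonal contributions $j=k$ in $\sum_{j,k} \int\!\int Q_{\varepsilon,\delta}(t{-}s,t{-}r,B^j_s,B^k_r)\,dsdr$ are exactly cancelled by the $-\tfrac12 \sum_j \int\!\int Q_{\varepsilon,\delta}$ terms from the Wick compensator inside each $V_t(B^j,W)$, leaving only $Q^{(4)}_{\varepsilon,\delta}$. Once this identity is in hand, the rest is a routine combination of Paley--Zygmund, Chebyshev, and the definition of $\lambda_{\varepsilon,\delta}$ and $b_{\varepsilon,\delta}$.
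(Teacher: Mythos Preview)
Your proposal is correct and follows essentially the same argument as the paper: the decomposition $B_1\cap B_2\subseteq\tilde A_\lambda$, Paley--Zygmund for $P(B_1)$, Chebyshev combined with the Gaussian computation $\EE^W[e^{\sum_j V_t(B^j,W_{\varepsilon,\delta})}]=e^{Q^{(4)}_{\varepsilon,\delta}}$ and the symmetry reduction to $\tilde I(t,x)$ are exactly the steps the paper carries out. Your identification of the exponential moment and the factor~$4$ bookkeeping is accurate, so there is no obstacle remaining.
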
 
  \begin{proof}
    Following the same arguments in the proof Lemma \ref{l.A-lambda}, we see that
    \begin{align*}
      P(\tilde A_\lambda(t,x))\ge P \left( Z_{\varepsilon,\delta}(t,x)\ge \frac12 \EE Z_{\varepsilon,\delta}(t,x) \right) -\tilde \xi_{n,\lambda}(t,x)\,,
    \end{align*}
    where
    \begin{align*}
    &\tilde\xi_{n, \la}(t,x)
    =P   \Bigg\{  \EE^{B^1,\cdots, B^4}\Bigg[\sum_{j=1}^2\sum_{k=3}^4 Q^{(2)}(t,B^j,B^k)  \Theta(B^1,B^2, W_{\vare, \de}' ) \Theta(B^3,B^4, W_{\vare, \de}' )  \Bigg] >\frac \lambda 4 |\EE Z_{\varepsilon,\delta}(t,x)|^2    \Bigg\}\,. 
    \end{align*}
    The first probability can be estimated by the Paley-Zygmund's inequality,
    \begin{align*}
      P\left(Z_{\varepsilon,\delta}(t,x)\ge \frac{1}{2}\EE Z_{\varepsilon,\delta}(t,x) \right)
    &\ge  
    \frac{|\EE Z_{\varepsilon,\delta}(t,x)|^2}{4 \EE |Z_{\varepsilon,\delta}(t,x)|^2}\,.
    \end{align*} 

    We  use the Chebyshev inequality to estimate $\tilde\xi_{n, \la}(t,x)$. We set
    \begin{equation*}
        \tilde I_{\varepsilon,\delta}(t,x)=\EE^{W'}\EE^{B^1,\cdots, B^4}\Bigg[ \sum_{j=1}^2\sum_{k=3}^4 Q^{(2)}(t,B^j,B^k)  \Theta(B^1,B^2, W_{\vare, \de}' ) \Theta(B^3,B^4, W_{\vare, \de}' )  \Bigg]\,.
    \end{equation*}
    By exchanging the order of integrations, we obtain
     \begin{align*}
     \tilde I_{\varepsilon,\delta}(t,x)
     &=\EE^{B^1,\cdots, B^4}  \lt[ \prod_{j=1}^4 u_0(B^j_t)  Q^{(2)}_{\varepsilon,\delta}(t, B^1,B^2)Q^{(2)}_{\varepsilon,\delta}(t, B^3,B^4) \sum_{j=1}^2\sum_{k=3}^4 Q^{(2)}(t,B^j,B^k) e^{Q^{(4)}_{\varepsilon,\delta}(t,B^1,\dots,B^4)}  \rt] 
     \\ &=4\EE^{B^1,\cdots, B^4}  \lt[ \prod_{j=1}^4 u_0(B^j_t)  Q^{(2)}_{\varepsilon,\delta}(t, B^1,B^2)Q^{(2)}_{\varepsilon,\delta}(t, B^3,B^4)  Q^{(2)}(t,B^1,B^3) e^{Q^{(4)}_{\varepsilon,\delta}(t,B^1,\dots,B^4)}  \rt]\,.
    \end{align*}
    Using \eqref{e.uniform-Q-bound}, we get $\tilde I_{\varepsilon,\delta}(t,x)\le \tilde I(t,x)$, where $\tilde I$ is defined in \eqref{def.tiI}.
    Thus, by Chebyshev inequality, we have
    \[
      \tilde\xi_{n, \la}(t,x) \le\frac{4\tilde I(t,x)}{\lambda|\EE Z_{\varepsilon,\delta}(t,x)|^2}\,.
    \]

    Combining previous estimates, we have 
    \begin{equation}\label{tmp:tiA}
    P(\tilde A_\la(t,x)) \ge \frac14 \frac{|\EE Z_{\varepsilon,\delta}(t,x)|^2}{ \EE |Z_{\varepsilon,\delta}(t,x)|^2}-\frac 4 \lambda\frac{\tilde I(t,x)}{ |\EE Z_{\varepsilon,\delta}(t,x) |^2 }\,. 
    \end{equation}
    Hence, if $\lambda$ satisfies \eqref{eqn:ltia}, then
    \begin{equation*}
     \frac 4 \lambda\frac{\tilde I(t,x) }{ |\EE Z_{\varepsilon,\delta}(t,x) |^2 }\le \frac18 \frac{|\EE Z_{\varepsilon,\delta}(t,x)|^2}{ \EE |Z_{\varepsilon,\delta}(t,x)|^2}
    \end{equation*}
    and the estimate \eqref{tmp:tiA} implies \eqref{est:tiA}.
  \end{proof}
 The above lemma and the concentration inequality yield 
  \begin{lemma}\label{lem:dtia} For every $a>0$ and $\lambda\ge \lambda_{\varepsilon,\delta}(t,x) $, we have
    \begin{equation}
      P \left( d(\WW,\tilde A_\lambda(t,x))>a+2\sqrt{\log\frac2{b_{\varepsilon,\delta}(t,x)}} \right)\le 2e^{-\frac{a^2}4}\,. 
    \end{equation}
  \end{lemma}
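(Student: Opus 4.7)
The plan is to mirror, line for line, the proof of Lemma~\ref{lem:dconcentration}, substituting the set $A_\lambda(t,x)$ by $\tilde{A}_\lambda(t,x)$ and the lower bound $b(t)$ by $b_{\varepsilon,\delta}(t,x)$. All the underlying analytic machinery (Mercer decomposition, Karhunen--Lo\`eve truncation, Talagrand's Gaussian concentration in $\mathbb{R}^n$, Chebyshev tail of the tail of the expansion) carries over verbatim, so the only issue is to verify that the preliminary estimate $P(\tilde{A}_\lambda(t,x)) \ge b_{\varepsilon,\delta}(t,x)$ holds, which is precisely what Lemma~\ref{lem:tiA-lambda} gives under the hypothesis $\lambda \ge \lambda_{\varepsilon,\delta}(t,x)$.

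More concretely, I would first recall the Mercer basis $\{e_k\}$ diagonalizing $\tilde{Q}_{\varepsilon,\delta}$, write $\dot{\WW}_{\varepsilon,\delta}=\sum_k\sqrt{\lambda_k}e_k G_k$ and its truncation $\dot{\WW}_{\varepsilon,\delta,n}=\sum_{k\le n}\sqrt{\lambda_k}e_k G_k$, and retain the two key facts from the previous lemma: $\sup_k\lambda_k\le 1$ and $|d(\WW_{\varepsilon,\delta,n},\WW'_{\varepsilon,\delta,n})|\le |d_n(G^{(n)},G'^{(n)})|$, where $d_n$ is the Euclidean distance on $\mathbb{R}^n$. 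Next I introduce the finite-dimensional shadow
\begin{equation*}
  \tilde{B}_{\lambda,n}=\Bigl\{(G_1',\dots,G_n')\in\mathbb{R}^n : \sum_{k=1}^\infty\sqrt{\lambda_k}e_k G_k'\in\tilde{A}_\lambda(t,x)\Bigr\},
\end{equation*}
and note that $P(\tilde{B}_{\lambda,n})\ge P(\tilde{A}_\lambda(t,x))\ge b_{\varepsilon,\delta}(t,x)$ by Lemma~\ref{lem:tiA-lambda}.

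Then, for any $\rho\in(0,1)$ and $a>0$, the same triangle-inequality decomposition as in the derivation of \eqref{tmp:PGdn} yields
\begin{equation*}
  P\bigl(d(\WW_{\varepsilon,\delta},\tilde{A}_\lambda(t,x))>a\bigr) \le P\bigl(d_n(G^{(n)},\tilde{B}_{\lambda,n})>\rho^2 a\bigr) + P\bigl(d(\WW_{\varepsilon,\delta},\WW_{\varepsilon,\delta,n})>\rho(1-\rho)a\bigr).
\end{equation*}
The first probability is controlled by Talagrand's Gaussian concentration inequality on $\mathbb{R}^n$ (\cite{talagrand}*{Lemma~2.2.11}), giving
$$P\bigl(d_n(G^{(n)},\tilde{B}_{\lambda,n})>\rho^2 a\bigr)\le 2\exp\Bigl\{-\tfrac14\bigl(\rho^2 a-2\sqrt{\log(2/b_{\varepsilon,\delta}(t,x))}\,\bigr)^2\Bigr\}$$
whenever $\rho^2 a>2\sqrt{\log(2/b_{\varepsilon,\delta}(t,x))}$. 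The second probability is bounded by Chebyshev and Parseval by $[\rho(1-\rho)a]^{-2}\sum_{k>n}\lambda_k$.

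Finally, because $\int_{\mathbb{R}\times\mathbb{R}^d}\tilde{Q}_{\varepsilon,\delta}(s,s,y,y)\,dsdy<\infty$, the trace $\sum_k\lambda_k$ is finite, so letting $n\to\infty$ eliminates the Chebyshev term, and then letting $\rho\to 1$ and substituting $a\leftarrow a+2\sqrt{\log(2/b_{\varepsilon,\delta}(t,x))}$ gives the stated bound $2e^{-a^2/4}$. Since the entire argument is a direct transcription of the proof of Lemma~\ref{lem:dconcentration} with the input of Lemma~\ref{lem:tiA-lambda} in place of Lemma~\ref{l.A-lambda}, there is no genuine new obstacle; the only conceptual care required is to note that the hypothesis $\lambda\ge \lambda_{\varepsilon,\delta}(t,x)$ is exactly what unlocks the concentration bound through Lemma~\ref{lem:tiA-lambda}.
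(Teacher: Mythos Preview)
Your proposal is correct and follows exactly the approach the paper intends: the paper merely states that ``the above lemma and the concentration inequality yield'' the result, meaning one reruns the argument of Lemma~\ref{lem:dconcentration} verbatim with $\tilde A_\lambda$ in place of $A_\lambda$ and with the probability lower bound $b_{\varepsilon,\delta}(t,x)$ supplied by Lemma~\ref{lem:tiA-lambda}. Your detailed write-up fills in precisely these steps.
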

  \begin{theorem}\label{thm:Zsmall} Assume that Hypotheses \ref{h2}, \ref{hspa} and \ref{hroot} hold. Let $t>0$ and $x\in\RR^d$ be fixed. For every $0<r<\frac12 e^{-2\sqrt{\lambda(t,x)\log\frac2{b(t,x)}}}$, we have
    \begin{equation}\label{eqn:DUleftr}
      P \left( \frac{\|Du(t,x)\|_{\cH}^2}{\EE \|Du(t,x)\|_{\cH}^2} < r \right)
      \le 2\exp\lt\{-\frac14\lt(\frac{\log(2r)}{\sqrt{\lambda(t,x)}}+2\sqrt{\log\frac2{b(t,x)}} \rt)^2 \rt\} \,.
    \end{equation}
    Written another way, for every $a>0$, we have
    \begin{equation}
      P \left( \frac{\|Du(t,x)\|_{\cH}^2}{\EE \|Du(t,x)\|_{\cH}^2}<\frac12 e^{-2\sqrt{\lambda(t,x)\log\frac2{b(t,x)}}} e^{-\sqrt{\lambda(t,x)}a} \right)\le 2e^{-\frac{a^2}4}  \,.
    \end{equation}
  \end{theorem}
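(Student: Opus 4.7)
The plan is to mirror, line for line, the argument that established Theorem \ref{thm:usmallball}, with $Z_{\varepsilon,\delta}(t,x)=\|D^\WW u_{\varepsilon,\delta}(t,x)\|_{L^2(\RR^{d+1})}^2$ playing the role of $u_{\varepsilon,\delta}(t,x)$ and with the constants $\lambda_{\varepsilon,\delta}(t,x)$, $b_{\varepsilon,\delta}(t,x)$, and the set $\tilde A_\lambda(t,x)$ replacing their counterparts in Section \ref{sec:left_tail}. The three preparatory Lemmas \ref{lem:Duphi}, \ref{lem:tiA-lambda}, and \ref{lem:dtia} are precisely the analogues of Lemmas \ref{lem:uabove}, \ref{l.A-lambda}, and \ref{lem:dconcentration} needed to run this argument.

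First, I fix $\varepsilon,\delta>0$ and choose $\lambda=\lambda_{\varepsilon,\delta}(t,x)$, so that Lemma \ref{lem:tiA-lambda} gives $P(\tilde A_\lambda(t,x))\ge b_{\varepsilon,\delta}(t,x)$ and Lemma \ref{lem:dtia} is applicable. By Lemma \ref{lem:Duphi}, the event $\{Z_{\varepsilon,\delta}(t,x)\le r\, \EE Z_{\varepsilon,\delta}(t,x)\}$ is contained in
\begin{equation*}
\Bigl\{d(\WW_{\varepsilon,\delta},\tilde A_{\lambda_{\varepsilon,\delta}(t,x)}(t,x))\ge -\log(2r)/\sqrt{\lambda_{\varepsilon,\delta}(t,x)}\Bigr\}.
\end{equation*}
When $r$ is small enough that $a:=-\log(2r)/\sqrt{\lambda_{\varepsilon,\delta}(t,x)}-2\sqrt{\log(2/b_{\varepsilon,\delta}(t,x))}>0$, Lemma \ref{lem:dtia} yields the desired bound with $Z_{\varepsilon,\delta}$ and the subscripted constants in place of $Z$ and $\lambda(t,x), b(t,x)$.

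Second, I pass to the limit $\varepsilon,\delta\to 0$. From \eqref{eqn:Z1st}--\eqref{eqn:Z2nd}, $\EE Z_{\varepsilon,\delta}(t,x)$ and $\EE|Z_{\varepsilon,\delta}(t,x)|^2$ are deterministic integrals against Brownian-functional exponentials involving the mollified covariance $Q_{\varepsilon,\delta}$; the uniform bound \eqref{e.uniform-Q-bound} together with the pointwise convergence $Q_{\varepsilon,\delta}\to Q$ (and the fact that the exponential moments on the right of \eqref{eqn:Z2nd} are finite by Theorem \ref{t.hhnt}) supplies the dominated-convergence argument giving $\EE Z_{\varepsilon,\delta}(t,x)\to\EE Z(t,x)$ and $\EE|Z_{\varepsilon,\delta}(t,x)|^2\to\EE|Z(t,x)|^2$. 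The same reasoning yields $\tilde I_{\varepsilon,\delta}(t,x)\to\tilde I(t,x)$, hence $\lambda_{\varepsilon,\delta}(t,x)\to\lambda(t,x)$ and $b_{\varepsilon,\delta}(t,x)\to b(t,x)$. Combined with the $L^p(\Omega)$-convergence of $u_{\varepsilon,\delta}\to u$ from Proposition \ref{p.5.1}, standard Malliavin-calculus continuity (applied to the Wick-exponential chaos expansion, as in Corollary \ref{cor.Dku-bound}) gives $D^\WW u_{\varepsilon,\delta}\to D^\WW u$ in $L^2(\Omega;L^2(\RR^{d+1}))$, so $Z_{\varepsilon,\delta}(t,x)\to Z(t,x)$ in probability. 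Since the event $\{Z(t,x)\le r\,\EE Z(t,x)\}$ is open-like and the right-hand side of the inequality depends continuously on the constants, Fatou (or weak convergence of laws) transports the estimate to the limit. Invoking Proposition \ref{prop:DDW} to identify $Z(t,x)=\|Du(t,x)\|_\cH^2$ delivers \eqref{eqn:DUleftr}. The second formulation follows by substituting $r=\tfrac12 e^{-2\sqrt{\lambda(t,x)\log(2/b(t,x))}}e^{-\sqrt{\lambda(t,x)}\,a}$ into the first.

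The main obstacle is precisely this limit passage: ensuring that all four quantities $\EE Z_{\varepsilon,\delta}$, $\EE Z_{\varepsilon,\delta}^2$, $\tilde I_{\varepsilon,\delta}$, and the law of $Z_{\varepsilon,\delta}$ converge simultaneously so that $\lambda_{\varepsilon,\delta}(t,x)\to\lambda(t,x)$ and $b_{\varepsilon,\delta}(t,x)\to b(t,x)$ and the probability bound is preserved. Once this is handled by dominated convergence based on \eqref{e.uniform-Q-bound}, the finiteness of all exponential moments from Theorem \ref{t.hhnt}(ii), and the $L^p$-convergence of the approximation, the rest of the argument is essentially a transcription of the proof of Theorem \ref{thm:usmallball}.
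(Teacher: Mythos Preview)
Your proposal is correct and follows essentially the same approach as the paper: apply Lemma~\ref{lem:Duphi} to contain $\{Z_{\varepsilon,\delta}\le r\,\EE Z_{\varepsilon,\delta}\}$ in a distance event, invoke Lemma~\ref{lem:dtia} (licensed by Lemma~\ref{lem:tiA-lambda}) to bound its probability, and then let $\varepsilon,\delta\to0$. The paper's own proof is in fact more terse on the limit passage---it simply writes ``by sending $\varepsilon,\delta$ to $0$ and applying Proposition~\ref{p.5.1}''---whereas you correctly identify and spell out the ingredients (convergence of $\EE Z_{\varepsilon,\delta}$, $\EE Z_{\varepsilon,\delta}^2$, and hence of $\lambda_{\varepsilon,\delta}(t,x)$, $b_{\varepsilon,\delta}(t,x)$, together with $Z_{\varepsilon,\delta}\to Z$ in probability) needed to justify that step.
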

  \begin{proof}
  	We recall that $Z(t,x)=\|D^\WW u(t,x)\|_{L^2(\RR^{d+1})}^2=\|Du(t,x)\|_{\cH}^2$.
    Let $\varepsilon,\delta>0$ be fixed and $r$ be such that $0<r<\frac12 e^{-2\sqrt{\lambda_{\varepsilon,\delta}(t,x)\log\frac2{b_{\varepsilon,\delta}(t,x)}}}$.
    Applying Lemma \ref{lem:Duphi}, we have
    \begin{align*}
      P\lt(\frac{Z_{\varepsilon,\delta}(t,x)}{\EE Z_{\varepsilon,\delta}(t,x)}\le r\rt)
      \le P\lt(d(\WW_{\vare, \de}, \tilde A_{\la_{\varepsilon,\delta}(t,x)}(t,x))\ge \frac{-\log(2r)}{\sqrt {\la(t)}}\rt)\,. 
    \end{align*}
    Due to the range of $r$, we can choose $a=a(r)>0$ such that $a+2\sqrt{\log\frac2{b_{\varepsilon,\delta} (t,x)}}=-\frac{\log(2r)}{\sqrt{\lambda_{\varepsilon,\delta} (t,x)}} $ and apply Lemma \ref{lem:dtia} to obtain 
    \begin{equation*}
      P\lt(\frac{Z_{\varepsilon,\delta}(t,x)}{\EE Z_{\varepsilon,\delta}(t,x)}\le r\rt)
       \le 2 e^{-\frac{a^2}4}\,.
    \end{equation*} By sending $\varepsilon,\delta$ to $0$ and applying Proposition \ref{p.5.1} we obtain \eqref{eqn:DUleftr}.
  \end{proof}

  In the case when $u_0$ is bounded above and below by positive constants, it is possible to obtain more explicit estimates for $\lambda(t,x)$ and $b(t,x)$. For this purpose, we need the following lemma.
  \begin{lemma}\label{lem:Dubelow} Suppose that Hypotheses \ref{h2}, \ref{hspa} and \ref{hroot} hold and $u_0$ is bounded  below by a positive constant. Then, there exists a positive constant $C$ such that
    \begin{equation}
      \EE\|Du(t,x)\|^2_\cH\ge Ct^{2- \alpha_0-\frac \alpha2}
    \end{equation}
    for every $(t,x)\in\RR_+\times\RR^d$.
  \end{lemma}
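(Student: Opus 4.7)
The plan is to drop all but the first chaos in the Wiener--It\^o expansion of $u(t,x)$ and then compute the resulting quadratic form explicitly, using the scaling built into Hypotheses \ref{h2} and \ref{hspa}.

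First, from the chaos expansion \eqref{eqn:chaos} with $\ell=1$, the Malliavin derivative has the representation $D u(t,x)=\sum_{n\ge1} n\, I_{n-1}(f_n(t,x;\cdot))$, so by the orthogonality of multiple integrals
\begin{equation*}
  \EE\|Du(t,x)\|_\cH^2=\sum_{n\ge1} n\cdot n!\,\|f_n(t,x)\|_{\cH^{\otimes n}}^2\ge \|f_1(t,x)\|_\cH^2.
\end{equation*}
Unwinding \eqref{def:fn} at $n=1$ and using $u_0\ge L>0$,
\begin{equation*}
  \|f_1(t,x)\|_\cH^2=\iint_{[0,t]^2}\iint_{\RR^{2d}}p_{t-s}(x-y)p_{t-r}(x-z)\gamma_0(s-r)\gamma(y-z)(p_s*u_0)(y)(p_r*u_0)(z)\,dsdydrdz,
\end{equation*}
which is at least
\begin{equation*}
  L^2\iint_{[0,t]^2}\gamma_0(s-r)\Bigl[\iint_{\RR^{2d}}p_{t-s}(x-y)p_{t-r}(x-z)\gamma(y-z)\,dydz\Bigr]dsdr.
\end{equation*}

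Next, I would compute the bracketed spatial integral. Translating $y\mapsto y+x$, $z\mapsto z+x$ and applying the convolution identity $p_{t-s}*p_{t-r}=p_{2t-s-r}$ (and evenness of $p$), the spatial integral equals $\int_{\RR^d}\gamma(w)p_{2t-s-r}(w)\,dw$. The lower bound $\inf_{|w|\le\varepsilon}\gamma(w)\ge c_0\varepsilon^{-\alpha}$ in Hypothesis \ref{hspa}, applied with $\varepsilon=\sqrt{2t-s-r}$, together with the fact that the Gaussian measure puts a fixed positive mass in a ball of one standard deviation, yields
\begin{equation*}
  \int_{\RR^d}\gamma(w)p_{2t-s-r}(w)\,dw\ge c_1(2t-s-r)^{-\alpha/2}.
\end{equation*}
(In the degenerate case $\gamma=\delta$ with $\alpha=1$, the integral equals $p_{2t-s-r}(0)\sim(2t-s-r)^{-1/2}$, which is consistent.)

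Substituting and using the temporal lower bound $\gamma_0(s-r)\ge c_0|s-r|^{-\alpha_0}$ from Hypothesis \ref{h2} (or $\gamma_0=\delta$ with $\alpha_0=1$), I get
\begin{equation*}
  \|f_1(t,x)\|_\cH^2\ge c_2 L^2\iint_{[0,t]^2}|s-r|^{-\alpha_0}(2t-s-r)^{-\alpha/2}\,dsdr,
\end{equation*}
and the scaling $(s,r)=t(\tilde s,\tilde r)$ converts this into $t^{2-\alpha_0-\alpha/2}$ times a dimensionless integral over $[0,1]^2$ that is finite precisely because $\alpha_0<1$ and $\alpha<2$. This gives $\EE\|Du(t,x)\|_\cH^2\ge C t^{2-\alpha_0-\alpha/2}$ as claimed.

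The one subtle point is the consistency of the argument across the white-noise extreme cases $\gamma_0=\delta$ (with $\alpha_0=1$) and $\gamma=\delta$ (with $\alpha=1$); one must verify that the remaining integral still converges and that the stated exponent $2-\alpha_0-\alpha/2$ is recovered. Apart from this bookkeeping, everything is a direct computation, and I do not anticipate a substantial technical obstacle.
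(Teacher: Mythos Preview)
Your argument is correct. The reduction to the first chaos via $\EE\|Du(t,x)\|_\cH^2\ge\|f_1(t,x)\|_\cH^2$, the collapse of the spatial double integral to $\int_{\RR^d}\gamma(w)p_{2t-s-r}(w)\,dw$, and the scaling estimate from Hypotheses~\ref{h2} and~\ref{hspa} all go through, and the resulting integral $\iint_{[0,1]^2}|\tilde s-\tilde r|^{-\alpha_0}(2-\tilde s-\tilde r)^{-\alpha/2}d\tilde s\,d\tilde r$ is finite since $\alpha_0<1$ and $\alpha_0+\alpha/2<2$. The white-in-time and white-in-space edge cases check out as you indicate.

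The paper takes a different route: it invokes the Feynman--Kac representation
\[
  \EE\|Du(t,x)\|^2_\cH=\EE^{B^1,B^2}\bigl[u_0(B^1_t)u_0(B^2_t)\,Q^{(2)}(t,B^1,B^2)\,e^{Q^{(2)}(t,B^1,B^2)}\bigr],
\]
bounds $e^{Q^{(2)}}\ge1$ and $u_0\ge L$, and then estimates $\EE Q^{(2)}$ from below by restricting to the small-ball event $A_{\varepsilon,t}=\{\sup_{s,r\le t}|B^1_s-B^2_r|\le\varepsilon\}$, on which $\gamma(B^1_s-B^2_r)\ge c_0\varepsilon^{-\alpha}$; optimizing over $\varepsilon=\sqrt t$ yields the same exponent. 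After the change of variables $(s,r)\mapsto(t-s,t-r)$, the quantity $\EE Q^{(2)}$ is exactly the integral you compute directly, so the two proofs diverge only in how they lower-bound that integral. Your approach is more elementary: it avoids the Feynman--Kac machinery (and hence does not actually use Hypothesis~\ref{hroot}), and replaces the small-ball restriction by a one-line Gaussian computation. The paper's route, on the other hand, fits naturally with the rest of Section~\ref{sec:non_degeneracy_of_malliavin_derivatives}, where the Feynman--Kac formula for $\|Du\|_\cH^2$ is the central object.
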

  \begin{proof}
    From the identity \eqref{eqn:FKDuep}, we obtain the following Feynman-Kac presentation of $\|Du (t,x)\|^2_\cH$
    \begin{equation}
      \EE\|Du(t,x)\|^2_\cH=\EE\lt[u_0(B^1_t)u_0(B^2_t)Q^{(2)}(t,B^1,B^2)e^{Q^{(2)}(t,B^1,B^2)} \rt]\,,
    \end{equation}
    where $B^1,B^2$ are independent Brownian motions in $\RR^d$ starting at $x$. Since $Q^{(2)} $ is non-negative and $u_0$ is bounded below, we see that
    \begin{equation*}
       \EE\|Du(t,x)\|^2_\cH\ge C\EE Q^{(2)}(t,B^1,B^2)\,.
    \end{equation*}
    We consider the set $A_{\varepsilon,t}=\{\sup_{0\le s,r\le t}|B^1_s-B^2_r|\le \varepsilon \}$.
    As in \cite{HHNT}*{page 46}, we see that $P(A_{\varepsilon,t})\ge Ce^{-\frac {Ct}{\varepsilon^2}} $.
    Owning to Hypothesis \ref{h2}, on $A_{\varepsilon,t}$ we have
    \[Q^{(2)}(t,B^1,B^2)\ge c_0\int_0^t\int_0^t \gamma_0(s-r)\varepsilon^{-\alpha}dsdr\ge C \varepsilon^{-\alpha}t^{2- \alpha_0} .\]
    Then the above estimates imply that
    \begin{align*}
      \EE\|Du(t,x)\|^2_\cH
      \ge C t^{2- \alpha_0}\varepsilon^{-\alpha}\exp\lt\{-\frac{Ct}{\varepsilon^2} \rt\}\,.
    \end{align*}
    By choosing $\varepsilon=t^{\frac12}$, we see that the above estimate implies the result.
  \end{proof}
  Recall that $\lambda(t,x)$ and $b(t,x)$ are defined in \eqref{def:lb}.
  \begin{proposition}\label{prop.lb} Suppose that Hypotheses \ref{h2}, \ref{hspa} and \ref{hroot}  hold and that $u_0$ is bounded above and below by positive constants. Then, there exist positive constants $C_1,C_2,c_1,c_2$ such that 
    \begin{equation}\label{eqn.lb}
      \lambda(t,x)\le C_1t^{2- \alpha_0-\frac \alpha2} e^{c_1 t^\beta}
      \quad\textrm{and}\quad b(t,x)\ge C_2e^{-c_2t^\beta}
      \quad\textrm{for all}\quad(t,x)\in\RR_+\times\RR^d\,.
    \end{equation}
  \end{proposition}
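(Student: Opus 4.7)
The plan is to control $\lambda(t,x)$ and $b(t,x)$ in \eqref{def:lb} by separately estimating the numerators and denominators, using the upper moment bounds from Corollary \ref{cor.Dku-bound}, the lower second-moment bound from Lemma \ref{lem:Dubelow}, and a symmetrization argument for $\tilde I(t,x)$.

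The bound $b(t,x)\ge C_2 e^{-c_2 t^\beta}$ is essentially immediate. Corollary \ref{cor.Dku-bound} with $k=1,p=4$ gives $\EE\|Du(t,x)\|_\cH^4\le C t^{2(2-\alpha_0-\alpha/2)}e^{ct^\beta}$, and Lemma \ref{lem:Dubelow} gives $(\EE\|Du(t,x)\|_\cH^2)^2\ge C t^{2(2-\alpha_0-\alpha/2)}$; substituting these into the definition of $b(t,x)$ yields the claim.

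The non-routine step is the upper bound on $\tilde I(t,x)$ in \eqref{def.tiI}, which contains three $Q^{(2)}$ factors weighted by $e^{Q^{(4)}}$. I would first apply H\"older's inequality with exponents $(3,3,3)$, splitting the integrand into three factors of the form $u_0^{4/3}Q^{(2)}(t,B^j,B^k)e^{Q^{(4)}/3}$, each carrying a different pair $(j,k)\in\{(1,2),(3,4),(1,3)\}$. Since $B^1,B^2,B^3,B^4$ are exchangeable, the three resulting expectations coincide, giving
\[
\tilde I(t,x)\le 4\,\EE\bigl[u_0(B^1_t)u_0(B^2_t)u_0(B^3_t)u_0(B^4_t)\,(Q^{(2)}(t,B^1,B^2))^3 e^{Q^{(4)}(t,B^1,\ldots,B^4)}\bigr].
\]
A single Cauchy-Schwarz step then separates the polynomial factor from the exponential weight,
\[
\tilde I(t,x)\le 4\bigl(\EE[u_0^4(Q^{(2)}(t,B^1,B^2))^6]\bigr)^{1/2}\bigl(\EE[u_0^4 e^{2Q^{(4)}(t,B^1,\ldots,B^4)}]\bigr)^{1/2}.
\]
By the Feynman-Kac identity in Proposition \ref{p.5.1}(ii), the second factor equals $(\EE u_{(\sqrt2)}(t,x)^4)^{1/2}$, and is therefore bounded by $Ce^{ct^\beta}$ via Theorem \ref{t.hhnt}(ii). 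For the first factor, Brownian scaling $(B^j_{tu}-x)\stackrel{\mathrm{law}}{=}\sqrt{t}\tilde B^j_u$ together with Hypotheses \ref{h2} and \ref{hspa} reduces $Q^{(2)}(t,B^1,B^2)$ to $C t^{2-\alpha_0-\alpha/2}$ times a time-independent functional of the rescaled Brownian pair; its sixth moment is finite by arguments parallel to those around \eqref{tmp:hln}, producing $\EE[(Q^{(2)}(t,B^1,B^2))^6]\le C t^{6(2-\alpha_0-\alpha/2)}$.

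Combining, $\tilde I(t,x)\le C t^{3(2-\alpha_0-\alpha/2)}e^{c_1 t^\beta}$. Substituting this together with $\EE\|Du\|_\cH^4\le Ct^{2(2-\alpha_0-\alpha/2)}e^{ct^\beta}$ and $(\EE\|Du\|_\cH^2)^4\ge C t^{4(2-\alpha_0-\alpha/2)}$ into the expression for $\lambda(t,x)$ in \eqref{def:lb}, the $t$-powers combine to $t^{(3+2-4)(2-\alpha_0-\alpha/2)}=t^{2-\alpha_0-\alpha/2}$, yielding the claimed bound $\lambda(t,x)\le C_1 t^{2-\alpha_0-\alpha/2}e^{c_1 t^\beta}$. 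The main technical obstacle I anticipate is verifying the sixth-moment bound on $Q^{(2)}$ uniformly in all three regimes (function kernels, $\gamma_0=\delta$, and $\gamma=\delta$), since the scaling argument must be adapted in the degenerate cases and one must verify finiteness of the time-independent Brownian-pair moment.
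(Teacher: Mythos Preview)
Your proof is correct and follows the same overall architecture as the paper: bound $b(t,x)$ directly via Corollary \ref{cor.Dku-bound} and Lemma \ref{lem:Dubelow}, bound $\tilde I(t,x)$ by $C t^{3(2-\alpha_0-\alpha/2)}e^{ct^\beta}$, and then assemble $\lambda(t,x)$ from \eqref{def:lb}. The only difference is in how $\tilde I$ is handled. The paper skips your H\"older-$(3,3,3)$ symmetrization and applies a single Young/Cauchy--Schwarz step with a free parameter $\varepsilon$,
\[
\tilde I(t,x)\le 4\|u_0\|_\infty^4\,\varepsilon\,\EE\bigl[(Q^{(2)}_{12}Q^{(2)}_{34}Q^{(2)}_{13})^2\bigr]+4\|u_0\|_\infty^4\,\varepsilon^{-1}\,\EE\,e^{2Q^{(4)}},
\]
and then optimizes over $\varepsilon$; after optimization this is equivalent to Cauchy--Schwarz and leads to the same two ingredients you need (a sixth-order moment of $Q^{(2)}$ and $\EE e^{2Q^{(4)}}=\EE u_{(\sqrt2)}^4$). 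Your extra symmetrization step is harmless and arguably cleaner, since it reduces the polynomial factor to a single pair $(B^1,B^2)$. Regarding your stated technical concern, the sixth-moment bound $\EE[(Q^{(2)}(t,B^1,B^2))^6]\le Ct^{6(2-\alpha_0-\alpha/2)}$ is already contained in \eqref{e.2.4}: since $\EE[u_0(B^1_t)u_0(B^2_t)(Q^{(2)})^n]=n!\,\EE|I_n(f_n(t,x))|^2$, the bound holds uniformly in all the cases covered by Hypotheses \ref{h2} and \ref{hspa}, including the Dirac cases $\gamma_0=\delta$ and $\gamma=\delta$.
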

  \begin{proof}
    First, we estimate $\tilde I$ from above.
    Applying Cauchy-Schwarz inequality, we have
    \begin{align*}
      \tilde I(t,x)\le 4 \|u_0\|_{\infty}^4\varepsilon\EE \lt[\lt(Q^{(2)}(t,B^1,B^2)Q^{(2)}(t,B^3,B^4)Q^{(2)}(t,B^1,B^3)\rt)^2\rt] +4\|u_0\|_{\infty}^4\varepsilon^{-1}\EE e^{2Q^{(4)}(t,B^1,\cdots,B^4)}.
    \end{align*}
    Applying Hypothesis \ref{h2} and Theorem \ref{t.hhnt} (ii), we see that $\tilde I(t,x)\le C  \varepsilon t^{6(2- \alpha_0-\frac \alpha2)}+ C \varepsilon^{-1}e^{2c t^\beta}$
    for some constants $C,c>0$. We then choose $\varepsilon=C t^{-3(2- \alpha_0-\frac \alpha2)}e^{ct^\beta}$ to obtain
    \begin{align*}
      \tilde I(t,x)\le C t^{3(2- \alpha_0-\frac \alpha2)}e^{c t^\beta}\,.
    \end{align*}
    The former estimate in \eqref{eqn.lb} follows by applying Corollary \ref{cor.Dku-bound}, Lemma \ref{lem:Dubelow} and the previous estimate. The later estimate in \eqref{eqn.lb} follows from Corollary \ref{cor.Dku-bound} and Lemma \ref{lem:Dubelow}.
  \end{proof}
  As an immediate application of Proposition \ref{prop.lb} and Theorem \ref{thm:Duneg}, we have
  \begin{corollary}\label{cor.Du-k}
   Suppose that Hypotheses \ref{h2}, \ref{hspa} and \ref{hroot}  hold and that $u_0$ is bounded above and below by positive constants. For every $T>0$ and $p>0$, there exist positive constants $C,c_T$ such that 
   \begin{equation}
     \EE\|Du(t,x)\|^{-2p}_\cH\le C e^{c_T p^2}t^{-p(2- \alpha_0-\frac \alpha2)} \quad\textrm{for all }t,x\in[0,T]\times\RR^d\,.
   \end{equation}
  \end{corollary}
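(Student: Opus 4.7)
The plan is to simply substitute the bounds of Proposition \ref{prop.lb} into the negative moment estimate of Theorem \ref{thm:Duneg} and combine this with the lower bound of Lemma \ref{lem:Dubelow}, restricting $t\in[0,T]$ so that all $t^\beta$-dependent factors become constants depending only on $T$.

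First, I would recall Theorem \ref{thm:Duneg}, which states that
\begin{equation*}
  \EE\|Du(t,x)\|_\cH^{-2p}\le 2^{p}e^{2p\sqrt{\lambda(t,x)\log\frac2{b(t,x)}}}\Bigl(1+4\sqrt{\pi p^2 \lambda(t,x)}\,e^{p^2 \lambda(t,x)}\Bigr)\bigl(\EE \|D u(t,x)\|_\cH^2\bigr)^{-p}\,,
\end{equation*}
and apply Proposition \ref{prop.lb} together with Lemma \ref{lem:Dubelow} to bound the three ingredients on the right for $(t,x)\in[0,T]\times\RR^d$. Namely, $\lambda(t,x)\le C_1 T^{2-\alpha_0-\alpha/2}e^{c_1 T^\beta}=:\Lambda_T$, $\log(2/b(t,x))\le \log(2/C_2)+c_2T^\beta=:M_T$, and $(\EE\|Du(t,x)\|_\cH^2)^{-p}\le C^{-p}t^{-p(2-\alpha_0-\alpha/2)}$.

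Next, I would combine these uniform-in-$(t,x)$ bounds. The exponent $2p\sqrt{\lambda(t,x)\log(2/b(t,x))}\le 2p\sqrt{\Lambda_T M_T}$ is dominated by $p^2+\Lambda_T M_T$ via the AM-GM inequality, so this factor contributes at most $e^{p^2}e^{\Lambda_T M_T}$. The middle factor is bounded by $1+4\sqrt{\pi \Lambda_T}\,p\,e^{p^2\Lambda_T}$, and the prefactor $\sqrt{\Lambda_T}\,p$ is in turn absorbed into $e^{p^2}$ up to a universal multiplicative constant. Collecting everything yields
\begin{equation*}
\EE\|Du(t,x)\|_\cH^{-2p}\le C\,e^{c_Tp^2}\,t^{-p(2-\alpha_0-\alpha/2)}\,,
\end{equation*}
where $c_T=\Lambda_T+2$ (say) and $C$ depends on $T$ through $\Lambda_T M_T$ and the constant in Lemma \ref{lem:Dubelow}, but is independent of $(t,x)$ and $p$.

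There is essentially no hard step here, as all the heavy lifting has already been done in Theorem \ref{thm:Duneg}, Proposition \ref{prop.lb} and Lemma \ref{lem:Dubelow}. The only mild subtlety is packaging the $e^{2p\sqrt{\cdot}}$ factor into a $e^{c_Tp^2}$ form, which is achieved by the elementary inequality $2pA\le p^2+A^2$ applied with $A=\sqrt{\Lambda_T M_T}$, and similarly for the polynomial-in-$p$ factor $\sqrt{\pi p^2\lambda(t,x)}$, which is swallowed into $e^{c_T p^2}$ at the cost of enlarging $c_T$.
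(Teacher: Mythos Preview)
Your proposal is correct and follows exactly the route the paper indicates: the paper's proof is a single sentence stating that the corollary is ``an immediate application of Proposition \ref{prop.lb} and Theorem \ref{thm:Duneg},'' and you have simply written out that immediate application, additionally (and correctly) invoking Lemma \ref{lem:Dubelow} to control $(\EE\|Du(t,x)\|_\cH^2)^{-p}$. The only cosmetic point is that the factors $2^p$ and $C^{-p}$ (from Lemma \ref{lem:Dubelow}) also need to be absorbed into $e^{c_Tp^2}$ via the same AM--GM trick, but this is routine and does not affect the argument.
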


\section{Tails  of the density}   \label{sec:tails}  
In this section, we   use Malliavin calculus to obtain estimates on the density of the solution to \eqref{e.1.1}, which corresponds to the  case $\its=1$.
Let $\rho(t,x; y)$ denote the density of the random variable $u(t,x)$,
which is the solution to \eqref{e.1.1}. Hypotheses \ref{h2}, \ref{hspa} and \ref{hroot} (as always) are assumed. 
\subsection{Right tail}
\begin{theorem}\label{t.7.2}   Let the initial condition $u_0$ be bounded from above
and from below by positive constants. 
Then, the law of the random variable $u(t,x)$ has a density $\rho(t,x; y)$ with respect to the Lebesgue measure,
namely, for any Borel set $A\subset \RR $, $P(u(t,x)\in A)=\int_A \rho(t,x; y) dy$.  Moreover, for every $T>0$,  there are  positive constants $c_1(T), c_2, c_3 >0$ and $\tilde c_1(T), \tilde c_2, \tilde c_3 >0$  such that 
  \begin{equation}\label{est.dsty.upper}
    \rho(t,x; y)  \le c_1(T) t^{-\frac{4-2 \alpha_0- \alpha}4} \exp\left\{-c_2 t^{-\frac{4-2 \alpha_0- \alpha}2}(\log (c_3y))^{\frac{4-\alpha} {2}}\right\} \quad\mbox{for every } t\in(0,T]\mbox{ and }y>a_0e^{b_0t^\beta}
  \end{equation}
  and
  \begin{equation}\label{est.dsty.lower}
    \rho(t,x;y)\ge \tilde c_1(T) t^{-\frac{4-2 \alpha_0- \alpha}2} \exp\left\{-\tilde c_2 t^{-\frac{4-2 \alpha_0- \alpha}2}(\log (\tilde c_3y))^{\frac{4-\alpha} {2}}\right\} 
  \end{equation} 
  for every $t\in(0,T]$ and $y>a_0e^{b_0t^\beta}\vee(p_t*u_0(x)+1)$.
\end{theorem}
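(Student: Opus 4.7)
The plan is to prove the two bounds separately using the standard Malliavin-calculus density formulas together with the moment and tail estimates established in the previous sections. Since $\|Du(t,x)\|_\cH$ has negative moments of all orders by Corollary \ref{cor.Du-k} and $u(t,x)$ is infinitely Malliavin differentiable with moments controlled in Corollary \ref{cor.Dku-bound}, the density $\rho(t,x;y)$ exists and is smooth in $y$, and Nualart's formula gives
\begin{equation*}
\rho(t,x;y)=\EE\bigl[\1_{u(t,x)>y}\,H(t,x)\bigr],\qquad H(t,x)=\delta\!\lt(\frac{Du(t,x)}{\|Du(t,x)\|_\cH^2}\rt).
\end{equation*}

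For the upper bound \eqref{est.dsty.upper} I would apply H\"older's inequality to get $\rho(t,x;y)\le\|H(t,x)\|_{L^q}\bigl(P(u(t,x)>y)\bigr)^{1/p}$ with exponents $p,q$ chosen so that the $1/p$ factor in the exponent of the tail bound from Theorem \ref{t.3.12} is absorbed into constants. To control the Malliavin weight I would use the divergence product rule $\delta(Du\cdot\|Du\|_\cH^{-2})=\|Du\|_\cH^{-2}\delta(Du)-\|Du\|_\cH^{-4}\langle D\|Du\|_\cH^2,Du\rangle_\cH$ together with Meyer's inequality to reduce everything to positive $L^p$-norms of $Du,D^2u$ and negative $L^p$-norms of $\|Du\|_\cH$. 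These are precisely estimated by Corollary \ref{cor.Dku-bound} and Corollary \ref{cor.Du-k}, yielding $\|H(t,x)\|_{L^q}\le C(T,q)\,t^{-(4-2\alpha_0-\alpha)/4}$ up to exponential-in-$q$ factors. Combining with $P(u(t,x)>y)\le c_1\exp\!\bigl(-c_2 t^{-(4-2\alpha_0-\alpha)/2}(\log(c_3y))^{(4-\alpha)/2}\bigr)$ from Theorem \ref{t.3.12} produces \eqref{est.dsty.upper}.

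For the lower bound \eqref{est.dsty.lower} I would argue indirectly via the averaged density: for $\eta>0$ to be chosen,
\begin{equation*}
\rho(t,x;y)\ge\frac{1}{\eta}P\bigl(u(t,x)\in[y,y+\eta]\bigr)-\eta\sup_{z\in[y,y+\eta]}|\partial_z\rho(t,x;z)|,
\end{equation*}
which follows since the supremum of $\rho$ on $[y,y+\eta]$ exceeds the average, and the mean value theorem compares $\rho(t,x;y)$ with this supremum. In Step A I would lower bound $P(u(t,x)\in[y,y+\eta])=P(u\ge y)-P(u\ge y+\eta)$ using the two-sided tails \eqref{e.lower-upper-tail} from Theorem \ref{t.3.12}; choosing $\eta$ of appropriate size (a suitable power of $t$ times a power of $y$, using the requirement $y>p_t*u_0(x)+1$ so that $\log(y+\eta)\sim\log y$) the lower tail wins, giving a bound $\ge c\,\tilde c_1\exp(-\tilde c_2 t^{-(4-2\alpha_0-\alpha)/2}(\log(\tilde c_3y))^{(4-\alpha)/2})$. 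In Step B I would iterate the integration by parts one more time to obtain an upper bound of the same type for $|\partial_z\rho(t,x;z)|$, using Corollary \ref{cor.Dku-bound} at the level of second Malliavin derivatives. Step C then optimizes $\eta$ so that the variation term is at most half the first term, and the prefactor $t^{-(4-2\alpha_0-\alpha)/2}$ in \eqref{est.dsty.lower} emerges from the ratio of the interval probability to the optimal $\eta$.

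The main obstacle is Step B of the lower bound: the upper bound on $|\partial_z\rho|$ must have an exponential decay rate matching that of the upper bound \eqref{est.dsty.upper} (so that the same $\log$-term structure survives in the ratio), while its polynomial-in-$t$ prefactor must be comparable enough to allow the chosen $\eta$ to simultaneously (i) be small enough to render Step B negligible and (ii) keep $\log(y+\eta)$ close to $\log y$ in Step A. This forces delicate tracking of the $t$-dependence at the level of $D^2u$ and $D^3u$; it is precisely this tracking that explains the (worse) prefactor $t^{-(4-2\alpha_0-\alpha)/2}$ in the lower bound and the necessity of the different constants $\tilde c_2>c_2$ in the two exponentials, so that the lower bound is genuinely smaller than the upper bound despite a larger prefactor in the relevant regime $y>p_t*u_0(x)+1$.
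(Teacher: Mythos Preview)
Your upper bound argument is essentially the paper's: density formula with $H=\delta(Du/\|Du\|_\cH^2)$, expand via the product rule, bound the weight in $L^2$ using Meyer, Corollary~\ref{cor.Dku-bound} and Corollary~\ref{cor.Du-k}, then Cauchy--Schwarz against the tail from Theorem~\ref{t.3.12}. That part is fine.

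The lower bound proposal has a genuine gap in Step~A. You want to lower bound
\[
P(u\ge y)-P(u\ge y+\eta)\;\ge\;\tilde c_1\exp\!\bigl(-\tilde c_2 t^{-\frac{4-2\alpha_0-\alpha}2}(\log(\tilde c_3 y))^{\frac{4-\alpha}2}\bigr)-c_1\exp\!\bigl(-c_2 t^{-\frac{4-2\alpha_0-\alpha}2}(\log(c_3(y+\eta)))^{\frac{4-\alpha}2}\bigr)
\]
and then say ``the lower tail wins'' while also keeping $\log(y+\eta)\sim\log y$. But the constants in Theorem~\ref{t.3.12} come from Proposition~\ref{p.3.5}, where the Paley--Zygmund lower bound has a \emph{larger} exponential rate than the Chebyshev upper bound (compare the exponents in \eqref{e.upper-tail-general-1} and \eqref{e.upper-tail-general-2}: one has $\kappa_2^{1/(1-\rho)}$, the other has $2\log(\kappa_1/\tilde\kappa_1)+2^\rho\kappa_2-2\tilde\kappa_2$ times $\tilde\kappa_2^{-\rho/(\rho-1)}$). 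Consequently $\tilde c_2>c_2$, so when $\log(y+\eta)\sim\log y$ the subtracted term \emph{dominates} the first and the right-hand side above is negative. To force the second term small you would need $\eta$ so large that $\log(y+\eta)\gtrsim (\tilde c_2/c_2)^{2/(4-\alpha)}\log y$, i.e.\ $\eta$ of order a power of $y$ strictly greater than one; this destroys the balance with Step~B and also the claimed form of the final bound. In short, two-sided tail estimates with mismatched constants cannot produce a lower bound on a short interval probability.

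The paper avoids this entirely by a positivity trick. Instead of $Du/\|Du\|_\cH^2$ it uses the covering field $F(\tau,\xi)=p_{t-\tau}(x-\xi)u(\tau,\xi)$ in the formula
\[
\rho(t,x;y)=\EE\!\left[\1_{u\ge y}\left(\frac{\delta(F)}{\langle Du,F\rangle_\cH}+\frac{\langle D\langle Du,F\rangle_\cH,F\rangle_\cH}{\langle Du,F\rangle_\cH^2}\right)\right].
\]
Two things make this work: the mild equation gives $\delta(F)=u(t,x)-p_t*u_0(x)$ explicitly, and the chaos expansion with nonnegative kernels forces $Du\ge0$, $D^2u\ge0$, hence the second term above is nonnegative and can be dropped. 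For $y>p_t*u_0(x)+1$ one gets $\rho(t,x;y)\ge\EE[\1_{u\ge y}/A]$ with $A=\langle Du,F\rangle_\cH$, and a single Cauchy--Schwarz yields $\rho(t,x;y)\ge P(u\ge y)^2/\EE A$. Now only the \emph{lower} tail bound and an \emph{upper} bound on $\EE A$ are needed; no subtraction of tails, no derivative of the density. The prefactor $t^{-(4-2\alpha_0-\alpha)/2}$ comes directly from $\EE A\lesssim t^{(4-2\alpha_0-\alpha)/2}$, not from tracking higher Malliavin derivatives.
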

\begin{proof}
First, we prove the upper bound.  We use the following identity (see \cites{hubook,nuabook}) 
\begin{eqnarray}
 \rho(t,x; y)&=&   \EE\left\{ I_{\{u(t,x)\ge 
 y\}}\delta\left(\frac{Du(t,x)}{\|Du(t,x)\|_\cH^2}\right)\right\} \nonumber \\
 &=&     \EE\left\{ I_{\{u(t,x)\ge
 y\}}\left[ \|Du(t,x)\|_\cH^{-2}   \delta\left( Du(t,x) \right) 
\right.\right. \nonumber\\
  & & \left.\left.  - \langle D\left( \|Du(t,x)\|_\cH^{-2} \right)\,,
 Du(t,x)\rangle _\cH\right] \right\}   =    \EE \bigg\{I_{\{u(t,x)\ge
 y\}} \cA\bigg\}
 \,,
 \label{e.rho-formula-add} 
\end{eqnarray}
where
\begin{equation}
\cA:= \|Du(t,x)\|_\cH^{-2}  \delta(D u(t,x))  +2 \|Du(t,x)\|_\cH^{-3} \langle   D^2u(t,x)   \,,
 Du(t,x)\otimes Du(t,x)\rangle _{\cH^{\otimes 2}} \,.
 \label{e.cA-definition}
\end{equation}
Applying Meyer's inequality and Corollary \ref{cor.Dku-bound}, we have
\begin{align*}
  \EE|\delta(Du(t,x))|^p
  \lesssim_{p} \EE\|Du(t,x)\|_{\cH}^p+\EE\|D^2u(t,x)\|_{\cH^{\otimes2}}^p
  \lesssim_{p,T} t^{\frac{4-2 \alpha_0- \alpha}4p}
\end{align*}
for all $p>1$, $t\in[0,T]$ and $x\in\RR^d$.
From   H\"older's  inequality, the previous estimate, Corollary \ref{cor.Dku-bound}, 
and Corollary \ref{cor.Du-k}   we have 
\begin{equation}
\|\cA\|_2 \lesssim_T t^{-\frac{4-2 \alpha_0- \alpha}4}\,. 
\label{e.cA-bound}
\end{equation}
for all $(t,x)\in(0,T]\times \RR^d$.
Thus,  by H\"older inequality and \eqref{e.lower-upper-tail} we have 
\begin{align*}
 \rho(t,x;y)
 &\le P(u(t,x)>y)^{1/2} \|\cA\|_2\\
 &\lesssim_T  t^{-\frac{4-2 \alpha_0- \alpha}4}   \exp\left(-c_2 t^{-\frac{4-2 \alpha_0- \alpha}{2}}(\log (c_3y))^{\frac{4- \alpha} {2}}\right)
\end{align*}
 which implies the upper bound \eqref{est.dsty.upper} (with a possibly different choice of $c$).

%
 	We now prove the lower bound \eqref{est.dsty.lower}. 
	Fix $(t,x)$ and   put
	\begin{equation}\label{white.varphi}
		\varphi(\tau,\xi,t,x)=p_{t- \tau}(x- \xi)u(\tau,\xi) \,.
	\end{equation}
	It is easily seen that $\varphi(\cdot,t,x)$ belongs to $\DD^{1,2}$. In addition,
from the following equation 
\begin{equation}
    \label{eqn.u.white}
    u(t,x)=p_t* u_0(x)+\int_0^t\int_{\RR^d} p_{t-s}(x-y)u(s,y) W(dy,ds) 
\end{equation}
we deduce that
	\begin{equation*}
		\int_0^t\int_\RR \varphi(\tau,\xi,s,y)W(d\xi,d\tau)=u(t,x)-p_t* u_0(x)\,.
	\end{equation*}
For any  $\HH$-valued random variable $F$ in $\DD^{1,2}$,   it is well-known that (see \cites{hubook,nuabook})
	\begin{align*}
		\rho(t,x; y)&=\EE \left[I_{\{u(t,x)\ge y\}}\delta\left(\frac{F}{\langle Du(t,x),F\rangle_\HH} \right)\right]
		\\&= \EE\left[I_{\{u(t,x)\ge y\}}\left(\frac{\delta(F)}{\langle Du(t,x),F\rangle_\HH}+\frac{\langle D\langle Du(t,x),F\rangle_\HH,F\rangle_\HH}{\langle Du(t,x),F\rangle_\HH^2}  \right)\right]
	\end{align*}
	With the choice 
\[
F(\tau,\xi) =\varphi(\tau,\xi,t,x)=1_{[0,t]}(\tau)p_{t- \tau}(x- \xi)u(\tau,\xi)
\]
 this yields
	\begin{equation}\label{white.rho}
		\rho(t,x; y)=\EE\left[I_{\{u(t,x)\ge y\}}\left(\frac{u(t,x)-p_t* u_0(x)}{A(t,x)}+\frac{\langle D_\cdot A(t,x),\varphi(\cdot,t,x)\rangle_\HH}{A(t,x)^2} \right) \right]\,,
	\end{equation}
	where 
	\begin{equation}
 A(t,x)=\langle D   u(t,x),\varphi(\cdot,t,x)\rangle_\HH\,. 
	\end{equation}  
It is easy to see that $A(t,x)\in L^p$ for any $p\in [1, \infty)$
and $ A(t,x)\ge 0$.  

	From  \eqref{white.varphi}, it follows that
	\begin{equation}\label{white.Atx}
		A(t,x)=\int_{\RR^2} \int_{\RR^{2d}}  D_{\tau, \xi} u(t,x) 
		1_{[0,t]}(\tau') p_{t- \tau' }(x- \xi')u(\tau' ,\xi') \gamma_0(\tau- \tau') \gamma(\xi-\xi')d \xi d \xi'd \tau d \tau'  \,.
	\end{equation}
Its Malliavin derivative is 
\begin{align*} 
		D_{\la, \eta} A(t,x)
  &=\int_{\RR^2} \int_{\RR^{2d}}   \Bigg[(D_{\tau, \xi, \la, \eta}^2 u(t,x)1_{[0,t]}^{\tau'} p_{t- \tau' }(x- \xi')u(\tau' ,\xi')  
  \gamma_0(\tau- \tau') \gamma(\xi-\xi')d \xi d \xi'd \tau d \tau' \\
  &\quad +\int_{\RR^2} \int_{\RR^{2d}}  D_{\tau, \xi} u(t,x)  
		1_{[0,t]}(\tau') p_{t- \tau' }(x- \xi') D_{  \la, \eta} u(\tau ,\xi')  
    \gamma_0(\tau- \tau') \gamma(\xi-\xi')d \xi d \xi' d \tau d \tau' \,.
	\end{align*}
It is straightforward to verify that  
\begin{equation*} 
	D_{\la, \eta}u(t,x) \ge 0  
\quad{\rm and}\quad 	
	D_{\tau, \xi, \la, \eta}^2 u(t,x)\ge 0\,.
\end{equation*} 
This implies that  $D_{\la, \eta} A(t,x) \ge 0$. Clearly, we have $F\ge 0$ and hence
\begin{equation}
\langle D_\cdot A(t,x),\varphi(\cdot,t,x)\rangle_\HH\ge 0
\,.
\label{e.7.11-add}
\end{equation} 
As a consequence, \eqref{white.rho} implies
	\begin{equation*} 
		\rho(t,x; y)\ge \EE \left[I_{\{u(t,x)\ge y\}}\frac{u(t,x)-p_t* u_0(x)}{A(t,x)}\right]\,.
	\end{equation*}
When $y>p_t*u_0(x)+1$, we have $u(t,x)-p_t*u_0(x)>1$ on the event $\{u(t,x)>y\}$. 
This means 
\begin{equation}\label{white.est.rho}
		\rho(t,x; y)\ge \EE \left[ \frac{I_{\{u(t,x)\ge y\}}}{A(t,x)}\right]\,.
	\end{equation}
Applying H\"older inequality,
	\begin{align*}
		\left(\EE \left[I_{\{u(t,x)\ge y\}} \right]\right)^2\le  \EE \left( \frac{I_{\{u(t,x)\ge y\}}}{A(t,x)}\right)\left(\EE A(t,x)\right)\,.
	\end{align*}
   	Thus
	\begin{equation}
		\rho(t,x; y)
    \ge \frac{\left(P(u(t,x)\ge y) \right)^2}{\EE A(t,x)} \,.\label{e.6.q-lower} 
	\end{equation} 

	
The numerator is bounded by Theorem \ref{t.3.12} (with $\beta=1$).
It remains to estimate $\EE A(t,x)$ from above. From \eqref{white.Atx}, we immediately  have 
\begin{align*} 
		\EE A(t,x)
		\le \lt\|\|Du(t,x)\|_\HH \rt\|_2\lt\|\|\varphi(\cdot,t,x)\|_\HH\rt\|_2 \,. 
\end{align*}
The above first factor is bounded by Corollary  \ref{cor.Dku-bound}.  
For the above  second factor, we have 
\begin{align*}
\|\|\varphi(\cdot,t,x)\|_\HH\|_2^2
&\le  c\int_0^t\int_0^t \int_{\RR^{2d}}  p_{t-\tau} (x-\xi) p_{t- \tau' }(x- \xi')    
\gamma_0(\tau- \tau') \gamma(\xi-\xi')d \xi d \xi' d \tau d \tau' 
 \\&= Ct^{\frac{4-2 \alpha_0- \alpha}2}\,. 
\end{align*} 
Summarizing the above estimates we see that 
\begin{equation*}
		\EE A(t,x) \le ct^{\frac{4-2 \alpha_0- \alpha}2 }   \,.
	\end{equation*} 
Together with \eqref{e.6.q-lower}  and the lower bound in
\eqref{e.lower-upper-tail},  this shows 
\[
\rho(t,x;y)\ge \tilde c_1 t^{-\frac{4-2 \alpha_0- \alpha}2  }\exp\left(-\tilde c_2  t^{-\frac{4-2 \alpha_0- \alpha}{2}}(\log (\tilde c_3 y))^{\frac{4-\upsilon} {2}}\right)\,,
\]
which is \eqref{est.dsty.lower}.
\end{proof}

\begin{remark} \label{rem:asymp}
(i) In the case of space-time white noise in spatial dimension one, Theorem \ref{t.7.2} yields
\begin{equation} 
	   C_1\exp\left(-c_1  t^{-1/2}(\log y)^{\frac{3} {2}}\right)\le 
 \rho(t,x; y)  \le     C_2   \exp\left(-c_2  t^{-1/2}(\log y)^{\frac{3} {2}}\right)  \label{est.density.white-add}
	\end{equation} 
	for sufficiently large $y$.

\noindent(ii) If $(t,x)$ is fixed, the two estimates \eqref{est.dsty.upper} and \eqref{est.dsty.lower} imply that
\begin{equation}
	-\tilde c_2 t^{-\frac{4-2 \alpha_0- \alpha}2} \le\liminf_{y\to\infty}\frac{\log \rho(t,x;y)}{(\log y)^{\frac{4- \alpha}2}}\le \limsup_{y\to\infty}\frac{\log \rho(t,x;y)}{(\log y)^{\frac{4- \alpha}2}}\le-c_2t^{-\frac{4-2 \alpha_0- \alpha}2} \,.
\end{equation}
(iii) If $(x,y)$ is fixed such that $y>a_0\vee(u_0(x)+1)$, the two estimates \eqref{est.dsty.upper} and \eqref{est.dsty.lower} imply that
\begin{equation}
	-\tilde c_2(\log(\tilde c_3 y))^{\frac{4- \alpha}2} \le\liminf_{t\to0} t^{\frac{4-2 \alpha_0- \alpha}2}\log  \rho(t,x;y) \le \limsup_{t\to0} t^{\frac{4-2 \alpha_0- \alpha}2}\log  \rho(t,x;y) \le-c_2(\log(c_3 y))^{\frac{4- \alpha}2}\,.
\end{equation}
\end{remark} 
%
 

\subsection{Left-tail}

\begin{theorem}\label{thm:lefttail}   
For fixed $T>0$, there are positive constants $a_0,b_0$ and $C(T),c_1(T),c_2(T)$ such that for every $t\in(0,T]$, $x\in\RR^d$ and $0<y<a_0e^{-b_0t^\beta}$
\begin{equation}
\rho(t,x;y) 
 \le    
 C(T)   t^{-\frac{4-2\al_0-\al}{4}}  \exp\left\{ - \left( - c_1(T) \log y  -c_2(T) \right)^2\right\}  \,.  
\label{e.upper-left-bound-general-last}
\end{equation} 
In particular, when  the noise $\dot W$  is    one dimensional space-time white, we have
\begin{equation}
\rho(t,x;y) 
 \le   C(T)    t ^{ -\frac14  }    \exp\left\{ - \left( - c_1  \log y  -c_2   \right)^2\right\}\,.  \label{e.6.last}
\end{equation}
\end{theorem}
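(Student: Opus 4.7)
The plan is to run the same Malliavin integration-by-parts scheme as in the proof of the right-tail upper bound \eqref{est.dsty.upper}, but pairing the random variable $\cA$ from \eqref{e.cA-definition} with the indicator of the \emph{left}-tail event. Concretely, start from the identity
\[
\rho(t,x;y)=\EE\bigl[\1_{\{u(t,x)\ge y\}}\cA\bigr]
\]
established in \eqref{e.rho-formula-add}. Since $\cA=\delta\!\bigl(Du(t,x)/\|Du(t,x)\|_{\cH}^2\bigr)$ is a Skorohod integral, $\EE\cA=0$, hence
\[
\rho(t,x;y)=-\EE\bigl[\1_{\{u(t,x)<y\}}\cA\bigr].
\]

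Applying the Cauchy--Schwarz inequality yields
\[
\rho(t,x;y)\le P(u(t,x)<y)^{1/2}\,\|\cA\|_{L^2(\Omega)}.
\]
The $L^2$ estimate $\|\cA\|_2\lesssim_T t^{-(4-2\alpha_0-\alpha)/4}$ was already proved in \eqref{e.cA-bound} via Meyer's inequality together with Corollaries \ref{cor.Dku-bound} and \ref{cor.Du-k}, so no new work is required there.

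For the small-ball factor, invoke \eqref{e.upper-left-bound-general} from Theorem \ref{thm:usmallball}. Since $u_0$ is bounded above and below by positive constants (as throughout Section \ref{sec:tails}), $p_t*u_0(x)$ is pinched between two positive constants uniformly in $x$ and in $t\in(0,T]$. Similarly, for $t\in(0,T]$ the factors $e^{-c_3t^\beta}$ and $\sqrt{1+t^\beta}$ appearing in \eqref{e.upper-left-bound-general} are bounded between positive constants depending only on $T$. Choose $a_0=a_0(T)$ small and $b_0=b_0(T)$ large so that every $y$ with $0<y<a_0 e^{-b_0 t^\beta}$ satisfies the admissibility condition $r:=y/(p_t*u_0(x))<\tfrac12 e^{-c_4\sqrt{\lambda_t}}$ uniformly in $t\in(0,T]$. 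Theorem \ref{thm:usmallball} then gives
\[
P(u(t,x)\le y)\le C(T)\exp\!\Bigl\{-\bigl(c_1(T)(-\log y)-c_2(T)\bigr)^2\Bigr\}.
\]
Taking the square root (absorbing the resulting factor $1/2$ into the constants $c_1(T),c_2(T)$) and multiplying by the bound on $\|\cA\|_2$ yields \eqref{e.upper-left-bound-general-last}. Substituting $\alpha_0=\alpha=d=1$ gives $(4-2\alpha_0-\alpha)/4=1/4$ and the specialization \eqref{e.6.last}.

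The delicate point is the third paragraph: matching the range $y<a_0 e^{-b_0 t^\beta}$ with the admissibility condition in \eqref{e.upper-left-bound-general}. Since the latter involves $\sqrt{\lambda_t}\le \sqrt{\tilde C}\,e^{\tilde c t^\beta/4}$, which grows doubly exponentially in $t^\beta$, the constants $a_0,b_0$ must be chosen in a $T$-dependent way; for any fixed $T$, $\sqrt{\lambda_t}$ is uniformly bounded on $(0,T]$ and such a choice is possible, which is precisely why the statement allows the constants to depend on $T$.
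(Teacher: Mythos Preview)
Your proof is correct and follows essentially the same route as the paper: represent $\rho(t,x;y)$ via the left-tail indicator paired with $\cA$, apply Cauchy--Schwarz, invoke the bound \eqref{e.cA-bound} on $\|\cA\|_2$, and control $P(u(t,x)\le y)$ by Theorem~\ref{thm:usmallball}. The only cosmetic difference is that the paper simply cites the left-tail density formula $\rho(t,x;y)=-\EE[\1_{\{u(t,x)\le y\}}\cA]$ directly, whereas you derive it from the right-tail version \eqref{e.rho-formula-add} using $\EE\cA=0$; both are standard and equivalent.
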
 
\begin{proof} We use a formula similar to \eqref{e.rho-formula-add}:  
\begin{eqnarray}
 \rho(t,x; y)=   - \EE \bigg\{I_{\{u(t,x)\le 
 y\}} \cA\bigg\}
 \,,  
 \label{e.rho-formula-add-2} 
\end{eqnarray}
where $\cA$ is defined by \eqref{e.cA-definition} and bounded by 
\eqref{e.cA-bound}. 
The H\"older inequality yields
\begin{eqnarray}
 \rho(t,x; y)\le     P\left(  u(t,x)\le 
 y\right)^{1/2}  \|\cA\|_2 \lesssim t^{-\frac{4-2\al_0-\al}{4}}
 P\left(  u(t,x)\le 
 y\right)^{1/2}   
 \,,   
\end{eqnarray}  
 The probability 
   $P( u(t,x)\le 
 y)$ is bounded by \eqref{e.upper-left-bound-general}.   This yields 
 \eqref{e.upper-left-bound-general-last}. The estimate \eqref{e.6.last} is proved in the same way. 
\end{proof}

\bibliography{biblio}  
\end{document}